\documentclass[oneside,reqno]{amsart}
\usepackage{amssymb,amsmath}
\usepackage[english]{babel}
\usepackage{amsfonts}
\usepackage{color}
\usepackage{amsthm}
\usepackage[colorlinks=true,linkcolor=NavyBlue, citecolor=NavyBlue,urlcolor=NavyBlue]{hyperref}
\usepackage{graphicx}
\usepackage{mathtools}
\usepackage[usenames,dvipsnames]{pstricks}
\usepackage{bm}
\usepackage{amsmath}
\usepackage{listings}
\usepackage{hyperref}
\usepackage[usenames,dvipsnames]{pstricks}
 \usepackage{epsfig}
 \usepackage{pst-grad} % For gradients
 \usepackage{pst-plot} % For axes

\usepackage{mathtools}
\usepackage{orcidlink}
\usepackage{yfonts}

\usepackage{xy}
\usepackage[draft]{fixme}
\newcommand\deq{\mathrel{\stackrel{\makebox[0pt]{\mbox{\normalfont\tiny def}}}{=}}}

\theoremstyle{plain}
\newtheorem{theorem}{Theorem}[section]

\newtheorem{lemma}[theorem]{Lemma}
\newtheorem{proposition}[theorem]{Proposition}
\newtheorem{corollary}[theorem]{Corollary}

\theoremstyle{remark}
\newtheorem{remark}{Remark}

\theoremstyle{definition}
\newtheorem{definition}[theorem]{Definition}
\newtheorem{example}[theorem]{Example}
\newtheorem*{notation*}{Notation}
\usepackage{listings}
\lstdefinelanguage{GAP}{%
 morekeywords={%
 Assert,Info,IsBound,QUIT,%
 TryNextMethod,Unbind,and,break,%
 continue,do,elif,%
 else,end,false,fi,for,%
 function,if,in,local,%
 mod,not,od,or,%
 quit,rec,repeat,return,%
 then,true,until,while%
 },%
 sensitive,%
 morecomment=[l]\#,%
 morestring=[b]",%
 morestring=[b]',%
}[keywords,comments,strings]

\usepackage[T1]{fontenc}
\usepackage[variablett]{lmodern}
\usepackage{xcolor}
\usepackage[foot]{amsaddr}
\usepackage{enumerate}
\usepackage{arydshln}
 \usepackage{tikz}
\usepackage{lscape} 
\let \l \lambda

\newcommand{\listP}{\l = (\l_1,\l_2, \dots, \l_t)}

\newcommand{\mup}[1]{\widetilde{\mathcal{U}}_{#1}}

\newcommand{\wtp}{\widetilde{\pi}_n}

\newcommand{\dist}{\mathbb D}

\usepackage{stackengine}

\usepackage{cleveref}
\usepackage[draft]{fixme}
\usepackage{hyperref}

\let \l \lambda

\usepackage{multirow}
\newcommand{\NSg}{\mathrm{NSg}}
\newcommand{\NS}{\mathrm{NS}}

\newcommand{\KN}{\mathop{\mathrm{KN}}\nolimits}
\newcommand{\dbar}{{\overline{\mathbb{D}}}^{\, \text{o}}_{2k+2}}
\usetikzlibrary{patterns}

\begin{document}
\title[Geometric Characterization of Maximal Unrefinable Partitions]{A Geometric Characterization of Maximal Unrefinable Partitions via the Keith-Nath Transformation and Young Diagrams}

%On the Structure of Maximal Unrefinable Partitions and Their Correspondence with Distinct-Parts Partitions via Young Diagrams
%Maximal Unrefinable Partitions and Young Diagrams

%Young Diagram Symmetries and Combinatorial Proofs for Maximal Unrefinable Partitions
%Symmetries of Maximal Unrefinable Partitions
 
\author[R.~Aragona]{Riccardo Aragona
\orcidlink{0000-0001-8834-4358}
}
\author[L.~Campioni]{Lorenzo Campioni
\orcidlink{0009-0001-7328-2892}
}
\author[R.~Civino]{Roberto Civino 
\orcidlink{0000-0003-3672-8485}
}

\address{DISIM \\
 Universit\`a degli Studi dell'Aquila\\
 via Vetoio\\
 67100 Coppito (AQ)\\
 Italy}       

\email{riccardo.aragona@univaq.it}
\email{lorenzo.campioni1@univaq.it}
\email{roberto.civino@univaq.it}

\date{} \thanks{R. Aragona and R. Civino are members of INdAM-GNSAGA
 (Italy). R. Civino is funded by the Centre of excellence
 ExEMERGE at the University of L'Aquila. The authors gratefully acknowledge financial support from MUR-Italy through PRIN 2022RFAZCJ ``Algebraic Methods in Cryptanalysis'', with full funding provided for L.~Campioni.
}

\subjclass[2010]{11P81, 05A17, 05A19} \keywords{Unrefinable partitions, partitions into distinct parts, numerical semigroups, Young diagrams, Keith and Nath transformation, hook lengths}

\begin{abstract}
We investigate the combinatorial structure of unrefinable partitions through their correspondence with numerical sets and Young diagrams. Building on the bijection introduced by Keith and Nath, we apply a general geometric criterion that links the unrefinability of a partition directly to the hook lengths of its associated Young diagram. This criterion provides a structural method for the characterization of any unrefinable partition.

Using this general framework, we revisit the correspondence results between \emph{maximal} unrefinable partitions and partitions into distinct parts, previously established using enumerative methods. We provide alternative and purely combinatorial proofs of these bijections, focusing on the rigid symmetry structures of the Young diagrams. In the triangular weight case, we show that the corresponding diagrams are quasi-symmetric, i.e.\ symmetric up to a single extra column. We extend this analysis to the nontriangular case, showing that the diagrams either exhibit this same quasi-symmetric structure or are perfectly self-conjugate, depending on the maximal part. 
\end{abstract}

\maketitle

%%%%%%%%%%%%%%%%%%%%%%%%%%%%%%%%%%%%%%%%%%%%%%%%%%%%%%%%%%%%%%
%%%%%%%%%%%%%%%%%%%%%%%%%%%%%%%%%%%%%%%%%%%%%%%%%%%%%%%%%%%%%%
%%%%%%%%%%%%%%      S E C T I O N 1      %%%%%%%%%%%%%%%%%%%%%%%
%%%%%%%%%%%%%%%%%%%%%%%%%%%%%%%%%%%%%%%%%%%%%%%%%%%%%%%%%%%%%%
%%%%%%%%%%%%%%%%%%%%%%%%%%%%%%%%%%%%%%%%%%%%%%%%%%%%%%%%%%%%%%
\section{Introduction}

An integer partition $\lambda$ into distinct parts is said to be \emph{unrefinable} if no part $a\in\lambda$ can be replaced by a collection of distinct positive integers $b_1,\dots,b_m$ with $m\ge 2$ such that
\begin{enumerate}
    \item none of the $b_i$ belongs to $\lambda$,
    \item $a = b_1+\cdots+b_m$,
\end{enumerate}
and the resulting multiset
\[
(\lambda\setminus\{a\})\cup\{b_1,\dots,b_m\}
\]
is again a partition into distinct parts of the same integer. Equivalently, $\lambda$ is unrefinable if every part $a\in\lambda$ admits no decomposition into a sum of distinct positive integers all of which are absent from $\lambda$, and it is \emph{refinable} otherwise.

This special restriction between the parts naturally imposes an upper bound on the largest part that such a partition can contain. This maximal bound has been determined in previous works~\cite{aragona2022maximal,aragona2022number}, distinguishing if the weight of the partition is a triangular number or it is not.

Maximal unrefinable partitions, namely those reaching the upper bound for the largest part, have been completely classified using enumerative methods. It has been further shown that these maximal partitions are in bijective correspondence with certain subsets of partitions into distinct parts. Specifically, for triangular weights $T_n$ with $n$ odd, the number of such partitions was shown to be equal to the number of partitions of $\frac{n+1}{2}$ into distinct parts, whereas for even $n$ there is only one such partition. Analogous correspondences hold in the nontriangular case of weight $T_n - d$, where maximal unrefinable partitions are mapped to specific subsets of partitions into distinct parts depending on the parity of $n-d$ and the value of the maximal part.

While the underlying combinatorial mechanism is elementary, it is obscured by lengthy and involved computations.
The notion of unrefinability, however, reveals a rigid internal structure that is more naturally captured through a geometric interpretation.

In this paper, in light of the transformation introduced by Keith and Nath~\cite{keith2011partitions} (the KN transformation), we interpret unrefinable partitions through the lens of numerical sets and their associated Young diagrams. This perspective allows us to visually encode combinatorial properties of the partitions, providing a framework to study unrefinability in terms of the structure of the corresponding numerical set and the associated Young diagram.
 This methodology is based on  a recent general criterion for unrefinability that can be directly deduced from geometric properties of the Young diagram associated with a numerical set via the KN transformation~\cite{lorenzo}. This criterion provides a visual and structural way to determine whether a partition is unrefinable, independently of the specific enumeration of parts. 
  
 Our approach yields two main contributions. First, we apply this framework to the case of maximal unrefinable partitions of triangular weight $T_n$. For $n$ odd, we prove that the Young diagrams associated with these partitions exhibit a striking ``quasi-symmetry''. Specifically, they possess a symmetric staircase structure perturbed only by a single ``extra column'' appearing immediately after the main diagonal. We show that the hook lengths within this extra column completely encode the combinatorial data of the partition, allowing us to construct a bijective map to partitions of $k$ into distinct parts.

Second, we extend this geometric analysis to the nontriangular case of weight $T_n - d$. We identify that the geometric structure of these partitions is dictated by their maximal part $\lambda_t$. When $n-d$ is even, (which corresponds to partitions having maximal part $\l_t=2n-5$) we show that the associated Young diagrams are {self-conjugate}. When $n-d$ is odd ($\l_t=2n-4$), the diagrams exhibit the same quasi-symmetric pattern found in the triangular case, i.e.\ a symmetric staircase structure with a single extra column. Using these observations, we obtain constructive proofs of the correspondence between these partitions and their respective counterparts (partitions into distinct odd parts or standard distinct parts).

By applying this approach, we are able to recover the previously established results on maximal unrefinable partitions, which were originally obtained through enumerative methods~\cite{aragona2022maximal,aragona2022number}. Beyond merely reproducing these known facts, our framework highlights the combinatorial mechanisms underlying unrefinable partitions and clarifies their deep connection with numerical sets and Young diagrams. In this sense, the criterion derived from the Young diagram not only serves as a tool for verification but also offers a unified perspective linking the structural properties of partitions to their enumerative behavior.

\subsection{Related works}
Unrefinable partitions were first introduced in a group-theoretic context, where they arise in a natural and surprisingly rigid correspondence with the generators of a normalizer chain of subgroups~\cite{aragona2021unrefinable}. This initial appearance links the combinatorial notion of unrefinability with structural properties of finite groups and motivated subsequent investigations into their combinatorial and enumerative behavior.

The computational analysis of unrefinable partitions was addressed in~\cite{aragona2023verification}, where the authors introduced an efficient and general algorithm capable of verifying and generating all unrefinable partitions of a fixed weight, offering a practical tool for experimentation and data-based conjectures. A complete classification of maximal unrefinable partitions, i.e., those attaining the upper bound for the largest part, was later developed and completed~\cite{aragona2022maximal, aragona2022number}. These works adopt a fully enumerative viewpoint, establishing structural constraints, providing exact counts, and showing bijections with suitable subsets of partitions into distinct parts.

Parallel to these developments, a different but closely related line of research concerns the translation between integer partitions and numerical semigroups. The work of Keith and Nath~\cite{keith2011partitions} introduced what we will refer to in this paper as the KN transformation, which associates a partition to a numerical set and encodes combinatorial information through its hookset. This transformation is central to our study. The fundamental link establishing the bijection between the set of hook lengths of a diagram and the {gaps} of the associated numerical semigroup was formally explored by Constantin et al. \cite{constantin2015numerical}.

The interplay between partitions, numerical sets, and Young diagrams is further explored in~\cite{tutacs2019young}, where the authors provide geometric and tableau-based characterizations for Arf partitions. Constantin, Houston-Edwards, and Kaplan \cite{constantin2015numerical} used this geometric correspondence to derive combinatorial results on structural partition classes, such as {core partitions}. Similar connections are investigated in~\cite{suer2021symmetric}, focusing on symmetric and pseudo-symmetric numerical semigroups via Young diagrams, and in~\cite{burson2023integer}, regarding how semigroup properties influence the associated partition. More recent contributions, such as Ye\c{s}il~\cite{yecsil2025young}, examine refined Young-diagram decompositions for almost-symmetric numerical semigroups.

A separate yet deeply relevant thread concerns the minimal excludant (mex), a statistic that plays a fundamental role in the structure of unrefinable partitions. In unrefinable partitions, the {mex} governs which refinements are forbidden, thereby imposing strong structural constraints~\cite{aragona2023verification}. The combinatorial importance of the {mex} has been emphasized in various contexts: Andrews and Newman~\cite{Andrews2020} and Ballantine and Merca~\cite{Ballantine2020} study its distribution in partitions, while the connection between the {mex} and partitions into distinct parts has been the subject of recent study \cite{kaur2022minimal}. The broader significance of mex-based invariants in combinatorics and game theory is surveyed in Fraenkel and Peled~\cite{fraenkel2015harnessing}. Finally, the study of partition statistics through geometric methods has recently intersected with classical work on partition cranks~\cite{Hopkins2022}.

The initial terms of the sequence enumerating unrefinable partitions are recorded as A179009 in the On-Line Encyclopedia of Integer Sequences (OEIS) \cite{OEIS}.

\subsection{Organization of the paper}
The paper continues as follows: 
in Sec.~\ref{sec:pre}, we recall the necessary background on partitions and numerical sets. We formally define unrefinable and maximal unrefinable partitions, reviewing the known upper bounds for the largest part. Then we present the Keith-Nath transformation and recall the geometric criterion (Proposition~\ref{riconoscimentounref}) that characterizes unrefinability via the hook lengths of the associated Young diagram.

Sec.~\ref{sec:tri} is dedicated to maximal unrefinable partitions of triangular weight $T_n$. We analyze the structural properties of their Young diagrams, highlighting the quasi-symmetric pattern and the role of the extra column. These geometric arguments lead to a combinatorial proof (Theorem~\ref{lambdanraffinabile}) of the bijection between these partitions and partitions of $\frac{n+1}{2}$ into distinct parts through a double embedding construction.

Sec.~\ref{sec:nontri5} and Sec.~\ref{sec:nontri4} extends our analysis to the nontriangular case. We focus on maximal unrefinable partitions with largest part equal to $2n-5$ and $2n-4$, showing that their corresponding Young diagrams exhibit self-conjugate symmetry or a quasi-symmetric structure, respectively. Based on these geometric properties, we obtain the bijective correspondences with suitable partitions into distinct parts (Theorem~\ref{lambdanraffinabile-2n-5} and Theorem~\ref{thm:2n4_unrefinable}).

\section{Preliminaries}\label{sec:pre}
We begin by reviewing the fundamental notions related to  unrefinable partitions and numerical sets and semigroups, along with the specific notation adopted in this work.
\subsection{Unrefinable partitions and related facts}\label{sec:part}

A sequence of positive integers \(\listP\) is called a \emph{partition of \(N\) into distinct parts} if it satisfies
\[
\sum_{i=1}^{t} \l_i = N, \quad \l_1 < \l_2 < \dots < \l_t, \quad t \ge 2.
\]
In this case, we write \(\l \vdash N\) and denote the \emph{length} of the partition by \(|\l| = t\). Let \(\dist_N\) represent the collection of all partitions of \(N\) into distinct parts.

Furthermore, we define the subsets \(\mathbb{D}_{N}^{\,\text{o}}\) and \(\mathbb{D}_{N}^{\,\text{e}}\) of \(\dist_N\) as the partitions in which every part is odd or even, respectively.

For a partition \(\listP \in \dist_N\), we introduce the set of \emph{missing parts}
\[
\mathcal{M}_\l \deq \{1, 2, \dots, \l_t\} \setminus \{\l_1, \l_2, \dots, \l_t\}.
\]
These missing parts can be listed in increasing order as \(\mu_1 < \mu_2 < \dots < \mu_m\), where \(m \ge 0\).

It is straightforward to observe that, if a partition is {refinable}, then its smallest refinable part admits a refinement of the form \(a+b\)~\cite[Proposition~4]{aragona2023verification}. This observation motivates the following definition.

\begin{definition}\label{def_unref}
Let $\listP$ be a partition of $N$ into distinct parts, and let $\mu_1 < \mu_2 < \dots <\mu_m$ denote its missing parts.  
The partition $\l$ is called \emph{refinable} if there exist indices $1 \leq \ell \leq t$ and $1 \leq i < j \leq m$ such that
\[
\mu_i + \mu_j = \l_{\ell},
\]
and it is \emph{unrefinable} otherwise. We denote the set of unrefinable partitions of $N$ by $\mathcal{U}_N$.

An unrefinable partition $\listP \in \mathcal{U}_N$ is said to be \emph{maximal} if 
\[
\l_t = \max_{(\l_1', \l_2', \dots, \l_t') \in \mathcal{U}_N} \l_t'.
\]
The collection of all maximal unrefinable partitions of $N$ is denoted by $\mup{N}$.
\end{definition}

A fundamental aspect in the study of maximal unrefinable partitions is the
behaviour of the largest part~$\lambda_t$.  
For every integer $N$, the value of
$\lambda_t$ is not arbitrary: it is constrained by the combinatorial structure
imposed by unrefinability. 

We recall  that if $T_n =\binom{n+1}{2}$ is the $n$-th triangular number, then every nontriangular integer can be written uniquely, for $1\le d \le n-1$, as
$
T_{n,d} = T_n - d.
$

In the triangular case $N=T_n$ for $n \geq 6$, a maximal unrefinable partition necessarily
satisfies $\lambda_t = 2n-4$,  attaining the bound on the number of
missing parts proved by Aragona et al.~\cite{aragona2022maximal}. For nontriangular numbers, the situation is more delicate but follows similar
principles.  
In a previous work~\cite{aragona2022number}, it was shown that for a maximal unrefinable partition
$\lambda \in \widetilde{\mathcal U}_{T_{n,d}}$, except for the first sporadic values of $n$, the largest part $\lambda_t$
satisfies a sharp upper bound depending on the parity of $n-d$.  More precisely,
if $3< d \le n-1$, then
\[
\begin{cases}
\lambda_t \le 2n-5, & \text{if $n-d$ is even},\\[4pt]
\lambda_t \le 2n-4, & \text{if $n-d$ is odd}.
\end{cases}
\]
The extremal cases $d \in \{1,2,3\}$ yield respectively
\[
\lambda_t \le 2n-2,\qquad
\lambda_t \le 2n-3,\qquad
\lambda_t \le 2n-4.
\]

All these maximal values are summarized in Tab.~\ref{tab:accl-bound}, which collects the sharp upper bounds for every admissible value of $n$ and $d$.

\begin{table}
\centering
\begin{tabular}{c|c|c}
\hline
\textbf{Case} & \textbf{Condition} & \textbf{Upper bound for $\lambda_t$} \\
\hline\hline
Triangular & $N = T_n$ & $\lambda_t = 2n-4$ \\
\hline
\multirow{2}{*}{$3 < d \le n-1$} 
  & $n-d$ odd & $\lambda_t \le 2n-4$ \\
  & $n-d$ even  & $\lambda_t \le 2n-5$ \\
\hline
$d = 3$ & --- & $\lambda_t \le 2n-4$ \\
$d = 2$ & --- & $\lambda_t \le 2n-3$ \\
$d = 1$ & --- & $\lambda_t \le 2n-2$ \\
\hline

\end{tabular}
\bigskip

\caption{Upper bounds for the largest part $\lambda_t$ of a maximal unrefinable 
partition of $T_{n,d}$.}
\label{tab:accl-bound}
\end{table}

Moreover, it is known that an unrefinable
partition cannot contain too many missing parts~\cite{aragona2022maximal}.  
More precisely, if $\lambda=(\lambda_1,\ldots,\lambda_t)$ is an unrefinable
partition with largest part $\lambda_t$, and 
$\mu_1<\cdots<\mu_m$ are its missing parts, then the number of missing 
parts $m$ is bounded above by
\begin{equation}\label{eq:bound-missing}
m \leq \left\lfloor \frac{\lambda_t}{2}\right\rfloor.
\end{equation}
This bound is sharp and is attained by almost all maximal unrefinable partitions, a fact that follows directly from the explicit classification~\cite{aragona2022maximal,aragona2022number}.
For this reason, it is natural to isolate the class of maximal unrefinable
partitions that attain the bound of Eq.~\eqref{eq:bound-missing}.

\begin{definition}
For a fixed integer $N$, we define
\[
\overline{\mathcal{U}}_N \deq 
\left\{\,\lambda = (\l_1,\l_2, \dots, \l_t)\in\mup{N} \;\big|\;
\# \mathcal{M}_\lambda = \left\lfloor \frac{\l_t}{2} \right\rfloor\right\}.
\]
\end{definition}

\begin{remark}\label{rmk:propbarU}
Two useful observations follow from the properties of maximal unrefinable partitions in $\overline{\mathcal{U}}$.
First, no partition in $\overline{\mathcal{U}}$ can contain the value $\frac{\l_t}{2}$, as established in~\cite{aragona2022maximal, aragona2022number}. Second, for any $x \neq \frac{\l_t}{2}$, we have
\[
x \in \lambda \quad \text{if and only if} \quad \lambda_t - x \notin \lambda.
\]
This symmetry property is central to the forthcoming analysis and will be repeatedly exploited.
\end{remark}

The classification provides an explicit description of $\overline{\mathcal{U}}_N$: it consists of all partitions in $\widetilde{\mathcal{U}}_N$ except for certain exceptional ones, which are summarized in Tab.~\ref{tab:exceptions}. It is important to emphasize that, in light of the classification of the triangular case~\cite{aragona2022maximal}, the only nontrivial situation occurs for $T_n$ with $n$ odd. In contrast, when $n$ is even, there exists a single maximal unrefinable partition (namely $\widetilde{\pi}_n$ as listed in Tab.~\ref{tab:exceptions}) and therefore  this case will no longer be examined. The same applies to the extremal cases in terms of $d$ (i.e.\ $1 \leq d \leq 2$), as shown in Tab.~\ref{tab:accl-bound}, which will also not be considered further.

\begin{table}[h!]
\centering
\begin{tabular}{c|c|c}
\hline
\textbf{Case} & \textbf{Condition} & \textbf{Exceptional partitions} \\
\hline\hline
Triangular & $N=T_n$ & $\wtp \deq (1,2,\dots, n-3, n+1, 2n-4)$ \\
\hline
\multirow{2}{*}{Nontriangular} 
  & $N=T_{n,d}$, $n-d$ odd, $d=3$ & $\widetilde\sigma_n \deq (1,2,\dots, n-2, 2n-4)$ \\
  & $N=T_{n,d}$, $n-d$ even, $d=4$ & $\widetilde\tau_n \deq (1,2,\dots, n-2, 2n-5)$ \\
\hline
\end{tabular}
\medskip 

\caption{Exceptional partitions excluded from $\overline{\mathcal{U}}_N$ in the explicit classification.}
\label{tab:exceptions}
\end{table}

\subsection{Numerical sets and Young diagrams}
We now introduce the concept of numerical semigroups, which will play a key role in this study of unrefinable partitions.

\begin{definition}
A \emph{numerical set} $S$ is a subset of the non-negative integers $\mathbb{N}_0$, containing $0$ such that its complement 
\(
S^c = \mathbb{N}_0 \setminus S
\) 
is finite. A numerical set $S$ is called a \emph{numerical semigroup} if it is closed under addition. We denote by $\NSg$ and $\NS$ the collection of all numerical semigroups and sets, respectively.

The elements of $S^c$ are called the \emph{gaps} of $S$, and their number, denoted $G(S)$, is called the \emph{genus} of $S$. The largest element of $S^c$ is the \emph{Frobenius number} $F(S)$. Finally, the \emph{multiplicity} $M(S)$ of $S$ is the smallest positive element of $S$.
\end{definition}

\begin{remark}\label{rmk:nsg=u}
Let $S = \{s_0 = 0,  s_1, s_2, \dots, s_n, \rightarrow\}$ be a numerical semigroup with set of gaps $S^c = \{s_1^c,s_2^c, \dots, s_t^c\}$, where the symbol `$\rightarrow$' means that each integer larger than $s_n$ is also included in the set.
By definition, a numerical semigroup is closed under addition, which implies that no sum of two elements of $S$ can belong to $S^c$. Equivalently, each gap in $S^c$ cannot be expressed as a sum of elements from $S$. This observation shows that the gaps of a numerical semigroup naturally define an unrefinable partition $\lambda$. More specifically, the correspondences between the semigroup and the associated partition are as follows:

\[
\begin{aligned}
& S^c=\{s_1^c,\ldots,s_t^c\} &\rightarrow \quad &\l_S=\l=(\l_1,\ldots,\l_t)\\
    & S=\{0,s_1,\ldots,s_t+1,\rightarrow\} &\rightarrow \quad &\{0\}\cup\mathcal{M}_{\l}\cup\{\l_t+1,\rightarrow\}\\
    & G(S) &\rightarrow \quad &|\l|\\
    & F(S) &\rightarrow \quad &\l_t\\
    &M(S) &\rightarrow \quad & \mu_1\\
 %   & Ap(S,s_1) &\rightarrow \quad &\Vec{p}_{\l}
\end{aligned}
\]
\medskip

In this way, every numerical semigroup can be viewed as encoding an unrefinable partition, with its gaps, genus, Frobenius number and multiplicity corresponding to the parts, length, largest part, minimal excluded element of  $\lambda$, respectively.
Therefore $\NSg \subset \mathcal U$, i.e.\ very numerical semigroup naturally gives rise to an unrefinable partition via its set of gaps.  
However, the converse is not true: not every unrefinable partition corresponds to a numerical semigroup.  
For instance, consider the unrefinable partition 
$
\lambda = (1,2,5,6,8).
$ 
The associated set 
$
S_\lambda = \{0,3,4,7,9,\rightarrow\}
$
fails to be a numerical semigroup, because sums such as $6 = 3+3$ and $8 = 4+4$ are missing from $S_\lambda$. 
This shows that unrefinability of a partition does not guarantee closure under addition in the corresponding set.

\end{remark}

Keith and Nath showed~\cite{keith2011partitions} that every numerical set uniquely defines an integer partition. Indeed, given a numerical set 
	$S=\{0,s_1,\ldots,s_n,\rightarrow\}$
	it is possible to construct a Young diagram $Y_S$ (which in turn induces a partition) by drawing a contiguous  polygonal path starting from the origin in $\mathbb{Z}^2$ as follows: for each integer $0 \leq k \leq F(S)$
	\begin{itemize}
		\item draw an \emph{east} step if $k\in S$,
		\item draw a \emph{north} step if $k\notin S$.
	\end{itemize}
Note that the Young diagram $Y_S$ has $G(S)$ rows and $n$ columns, where $S=\{0,s_1,\ldots,s_n,\rightarrow\}$.
	We refer to the above construction as the \emph{$\KN$ transformation}, mapping a numerical set to its associated Young diagram.
This construction defines a one-to-one correspondence, that we denote by 
\[
\begin{aligned}
\KN: \NS &\rightarrow \{Y : Y \text{ is a Young diagram}\} \\
S &\mapsto Y_S.
\end{aligned}
\]

If $S$ is a numerical set, the Young diagram $Y_S$ obtained via the KN
transformation induces, by proceeding as in Remark~\ref{rmk:nsg=u}, the partition
$\lambda_S = S^{c}$.
When $S$ is a numerical semigroup, the associated partition $\lambda_S$ is
unrefinable.
	
	\begin{example}\label{ex:ns}
		Let $S=\{0,3,6,8,9,11,12,14,\rightarrow\}$ be a numerical set. The associated  Young diagram $Y_S$ is shown in Fig.~\ref{fix:exNS}.
		
		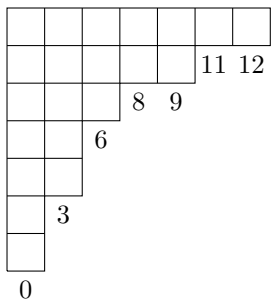
\begin{figure}
		\[
		\begin{tikzpicture}
			\draw (0,0)--(0.5,0)--(0.5,1)--(1,1)--(1,2)--(1.5,2)--(1.5,2.5)--(2.5,2.5)--(2.5,3)--(3.5,3)--(3.5,3.5)--(0,3.5)--(0,0);
			\draw (0.5,3.5)--(0.5,1);
			\draw (1,3.5)--(1,2);
			\draw (1.5,3.5)--(1.5,2.5);
			\draw (2,3.5)--(2,2.5);
			\draw (3,3.5)--(3,3);
			\draw (2.5,3.5)--(2.5,3);
			\draw (0,0.5)--(0.5,0.5);
			\draw (0,1)--(0.5,1);
			\draw (0,1.5)--(1,1.5);
			\draw (0,2)--(1,2);
			\draw (0,2.5)--(1.5,2.5);
			\draw (0,3)--(2.5,3);
			\node at (0.25,-0.25) {$0$};
			\node at (0.75,0.75) {$3$};
			\node at (1.25,1.75) {$6$};
			\node at (1.75,2.25) {$8$};
			\node at (2.25,2.25) {$9$};
			\node at (2.75,2.75) {$11$};
			\node at (3.25,2.75) {$12$};
		\end{tikzpicture}
		\]
		\caption{The Young diagram corresponding to the numerical set of Example~\ref{ex:ns}.}
		\label{fix:exNS}
		\end{figure}
	%	and the corresponding partition via the KN transformation is $(7,5,3,2,2,1,1)$.
	\end{example}

Before proceeding further, we introduce the structural components of a Young
diagram that will be needed in order to analyze its combinatorial properties.
All diagrams are read from left to right and from top to bottom.

\begin{definition}

We denote by $C_i$ the $i$-th column and by $R_j$ the $j$-th row of a Young diagram. Given a cell $c_{i,j}$ in the diagram, the \emph{arm} of $c_{i,j}$ is the number $a(c_{i,j})$ of cells to its right in the $i$-th row, while the \emph{leg} of $c_{i,j}$ is the number $l(c_{i,j})$ is the number of cells below it in the $j$-th column. The \emph{hook} $h_{i,j}$ of the cell $c_{i,j}$ is defined as the sum of its arm and leg, increased by one, and the set of all hooks is called the \emph{hookset}.
\end{definition}

The following lemma gives a general property of the hooks in a Young diagram.
	\begin{lemma}\label{hookinterno}
		Let $Y$ be a Young diagram. If $i,j\neq1$, then
		\begin{equation}\label{equazionehookinterno}
			h_{i,j}=h_{1,j}+h_{i,1}-h_{1,1}.    
		\end{equation}
	\end{lemma}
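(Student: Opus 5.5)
We argue directly from the definition of the hook as arm plus leg plus one. The plan is to express every hook in terms of row lengths and column lengths, and then observe that the boundary terms cancel. We fix conventions matching the paper's definition: $c_{i,j}$ is the cell in row $R_i$ and column $C_j$. We write $r_i$ for the length of row $R_i$ and $\ell_j$ for the length of column $C_j$. Then
\[
a(c_{i,j}) = r_i - j, \qquad l(c_{i,j}) = \ell_j - i, \qquad h_{i,j} = r_i + \ell_j - i - j + 1,
\]
for every cell $c_{i,j}$ of $Y$.

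We then specialize this formula to the three hooks on the right-hand side of \eqref{equazionehookinterno}:
\[
h_{1,j} = r_1 + \ell_j - j, \qquad h_{i,1} = r_i + \ell_1 - i, \qquad h_{1,1} = r_1 + \ell_1 - 1.
\]
Summing the first two and subtracting the third, the terms $r_1$ and $\ell_1$ cancel. What remains is $r_i + \ell_j - i - j + 1$, which is exactly $h_{i,j}$.

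The only point needing care is that the cells involved actually lie in $Y$. If $c_{i,j}\in Y$, then, since Young diagrams are closed under moving up and left, $c_{1,j}$, $c_{i,1}$ and $c_{1,1}$ also belong to $Y$. So the formula applies to each of them, and the identity holds for every cell $c_{i,j}$ of $Y$. The identity is trivially true when $i=1$ or $j=1$ as well, so the hypothesis $i,j\neq 1$ only serves to isolate the nontrivial case.

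The main obstacle is therefore purely notational. We must keep the roles of the two indices consistent, since the definition speaks of the $i$-th row and the $j$-th column. With that convention fixed, the argument is the straightforward cancellation above.
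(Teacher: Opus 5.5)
Your proof is correct and is the same argument as the paper's: write each hook as row length plus column length minus the two indices plus one, and note that the first-row and first-column contributions cancel. The only difference is that the paper substitutes the specific values $\#R_1=n-1$, $\#C_1=n-2$, $h_{1,1}=2n-4$ from the triangular setting, whereas you keep $r_1$ and $\ell_1$ arbitrary, which is what the lemma's claim for an arbitrary Young diagram $Y$ actually requires.
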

	\begin{proof}
		We first note that the leg of the cell $c_{1,j}$ is $\#C_j-1$ while its arm is $n-1-j$. Therefore 
		$
		h_{1,j}=\#C_j+n-1-j.
		$
		Similarly, for the cell $c_{i,1}$ we have 
		$
		h_{i,1}=\#R_i+n-2-i.
		$
		Combining the facts above, we obtain
		\[
		h_{1,j}+h_{i,1}-h_{1,1}=\#C_j+n-1-j+\#R_i+n-2-i-(2n-4).
	\]
		Hence, we have
\[
h_{1,j} + h_{i,1} - h_{1,1} = \#R_i - j + \#C_j - i + 1.
\]
Now, $\#R_i - j$ corresponds exactly to the arm of the cell $c_{i,j}$, and $\#C_j - i$ to its leg. It follows that
\[
h_{1,j} + h_{i,1} - h_{1,1} = a(c_{i,j}) + l(c_{i,j}) + 1,
\]
which establishes the claim in Eq.~\eqref{equazionehookinterno}.
	\end{proof}

In \cite{tutacs2019young}, the authors established several properties relating the hookset of a Young diagram to the associated numerical semigroup; we refer to their results for a complete treatment. In the following, we state one of these known results, which will later serve as a template for a version adapted to the set of unrefinable partitions $\mathcal{U}$.

\begin{proposition}[{\cite[Lemma~4]{tutacs2019young}}]\label{riconoscimentoNS}
Let $S = \{0, s_1, \dots, s_n, \rightarrow\}$ be a numerical set with corresponding Young diagram $Y_S$. Then:
\begin{enumerate}
    \item \label{riconoscimentoNS1} the hook length of the cell in the first column and $i$-th row corresponds to the $i$-th largest gap of $S$;
    \item \label{riconoscimentoNS2} for each $0 \le i \le n-1$, the hook length of the top cell in the $i$-th column of $Y_S$ is $F(S) - s_{i-1}$;
    \item \label{riconoscimentoNS3} $S$ is a numerical semigroup if and only if all hook lengths appear in the first column.
\end{enumerate}
\end{proposition}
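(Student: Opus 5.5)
The plan is to prove a single hook-length formula from the geometry of the $\KN$ path and then read off the three items from it. Number the steps of the path by $k=0,1,\dots,F(S)$. Each east step (an element $s\in S$ with $s<F(S)$) bounds exactly one column of $Y_S$ from below. Each north step (a gap $g\in S^c$) bounds exactly one row on the right. Since the path moves east or north at each step, the column labelled $s$ and the row labelled $g$ meet in a cell of $Y_S$ precisely when $s<g$, i.e. when the east step occurs before the north step.

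\textbf{Key claim.} For such a cell, the hook length is
\[
h = g - s .
\]
To prove it, I will trace the boundary path from the east step $s$ to the north step $g$. The cells of the arm are in bijection with the east steps strictly between $s$ and $g$. The cells of the leg are in bijection with the north steps strictly between $s$ and $g$. The cell itself accounts for the remaining $1$. Hence
\[
a+l+1=\bigl(g-s-1\bigr)+1=g-s,
\]
because the steps strictly between $s$ and $g$ number $g-s-1$, and each is either east or north. The one delicate point is matching rows to gaps in decreasing order from the top and columns to elements of $S$ in increasing order from the left. This bookkeeping convention is where I expect an off-by-one risk, but no real obstacle. The formula is also consistent with Lemma~\ref{hookinterno}: $(g-s)=(g-0)+(F-s)-(F-0)$.

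\textbf{Items (1) and (2).} The first column is labelled by $s=0$, so the hook in row $i$ is the $i$-th largest gap. This gives item (1). The top row is labelled by the largest gap $g=F(S)$, so the top cell of the column labelled $s_{i-1}$ has hook $F(S)-s_{i-1}$. This gives item (2), with the indexing $s_0=0$.

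\textbf{Item (3).} By the key claim, the hookset equals $\{g-s : g\in S^c,\ s\in S,\ s<g\}$, and by item (1) the first column contributes exactly $S^c$. So the condition ``all hook lengths appear in the first column'' says that $g-s\in S^c$ for every gap $g$ and every $s\in S$ with $s<g$.

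Suppose $S$ is a numerical semigroup, and suppose some $g-s$ lies in $S$. Then $g=s+(g-s)\in S$ by closure under addition, a contradiction. Hence every hook is a gap.

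Conversely, suppose every hook is a gap, but $S$ is not closed. Then there exist $s,s'\in S$ with $s+s'=g\notin S$. Since $0\in S$ and $0+x=x$, we may assume $s,s'>0$, so $s<g$. Then $g-s=s'\in S$ is a hook that is not a gap, a contradiction.
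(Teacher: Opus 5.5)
Your argument is correct and complete. The paper does not prove this proposition itself. It is quoted from \cite[Lemma~4]{tutacs2019young}, and the proof of Proposition~\ref{riconoscimentounref} only remarks that items (1) and (2) ``follow exactly the same arguments''. So there is no in-paper proof to compare against.

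Your route is the standard argument. You first show that the column of $s\in S$ and the row of $g\in S^c$ meet exactly when $s<g$. You then get hook length $g-s$ by counting the east and north steps strictly between $s$ and $g$. The paper uses this same fact tacitly later. For instance, in the proof of Proposition~\ref{riconoscimentounref} it describes the entries of the row headed by $2\mu_j$, and more generally any non-first-column cell, as numbers of the form $z-s_i$ with $s_i\in S_\lambda$.

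Deriving everything from the single formula $h=g-s$ makes items (1)--(3) one-line consequences. As you note, it also yields Lemma~\ref{hookinterno} immediately.

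Two cosmetic remarks:
\begin{enumerate}
\item The range ``$0\le i\le n-1$'' in item (2) is an indexing slip in the statement, since $i=0$ would involve $s_{-1}$. Your reading, with $s_0=0$ and the $i$-th column labelled $s_{i-1}$ for $1\le i\le n$, is the right one. You could add explicitly that every column label is $<F(S)$, so each column meets the top row and its top cell lies in the row of $F(S)$.
\item In the converse of item (3), the reduction to $s,s'>0$ is automatic: $s=0$ would give $g=s'\in S$.
\end{enumerate}
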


\begin{example}
		Let $S$ be the numerical set as in Example~\ref{ex:ns}. The hook lengths in the Young diagram are shown below. 
		
		\[
		\begin{tikzpicture}
			\draw (0,0)--(0.5,0)--(0.5,1)--(1,1)--(1,2)--(1.5,2)--(1.5,2.5)--(2.5,2.5)--(2.5,3)--(3.5,3)--(3.5,3.5)--(0,3.5)--(0,0);
			\draw (0.5,3.5)--(0.5,1);
			\draw (1,3.5)--(1,2);
			\draw (1.5,3.5)--(1.5,2.5);
			\draw (2,3.5)--(2,2.5);
			\draw (3,3.5)--(3,3);
			\draw (2.5,3.5)--(2.5,3);
			\draw (0,0.5)--(0.5,0.5);
			\draw (0,1)--(0.5,1);
			\draw (0,1.5)--(1,1.5);
			\draw (0,2)--(1,2);
			\draw (0,2.5)--(1.5,2.5);
			\draw (0,3)--(2.5,3);
			\node at (0.25,0.25) {$1$};
			\node at (0.25,0.75) {$2$};
			\node at (0.25,1.25) {$4$};
			\node at (0.25,1.75) {$5$};
			\node at (0.25,2.25) {$7$};
			\node at (0.25,2.75) {$10$};
			\node at (0.25,3.25) {$13$};
			
			\node at (0.75,1.25) {$1$};
			\node at (0.75,1.75) {$2$};
			\node at (0.75,2.25) {$4$};
			\node at (0.75,2.75) {$7$};
			\node at (0.75,3.25) {$10$};
			
			\node at (1.25,2.25) {$1$};
			\node at (1.25,2.75) {$4$};
			\node at (1.25,3.25) {$7$};
			
			\node at (1.75,2.75) {$2$};
			\node at (1.75,3.25) {$5$};
			\node at (2.25,2.75) {$1$};
			\node at (2.25,3.25) {$4$};
			\node at (2.75,3.25) {$2$};
			\node at (3.25,3.25) {$1$};
		\end{tikzpicture}
		\]
		\medskip
		
\noindent In accordance with the criterion of Proposition~\ref{riconoscimentoNS},
the set $S$ is a numerical semigroup, since all the integers appearing as
hook lengths occur in the first column of $Y_S$.  
Moreover, the first column shows the associated partition
$\lambda_S$, which in this case is given explicitly by
\[
\lambda_S = (1,2,4,5,7,10,13).
\]
	\end{example}

Notice that item~(\ref{riconoscimentoNS3}) of Proposition~\ref{riconoscimentoNS} provides a characterization of numerical semigroups among numerical sets. We now adapt this result to the set of unrefinable partitions $\mathcal{U}$. More precisely, we  derive a criterion to determine whether a partition is unrefinable directly from the Young diagram of the corresponding numerical set obtained via the $\KN$ transformation, thereby strengthening the condition in item~(\ref{riconoscimentoNS3}) to capture the property of unrefinability. The proof is taken from \cite{lorenzo}, but we include it here for completeness. 

\begin{proposition}\label{riconoscimentounref}
		Let $\l=(\l_1,\ldots,\l_t)$ be an unrefinable partition corresponding to the Young tableau $Y_{S_{\l}}$, where $S_{\l}$ is the numerical set associated to $\l$. Then
		\begin{enumerate}
			\item \label{item-ric1} the hook length of the cell in the first column and $i$-th row is $\l_{t-i+1}$;
			\item \label{item-ric2} for each $2\leq i\leq \#\mathcal{M}_{\l}$ the hook length of the top cell of the $i$-th column of $Y_{S_{\l}}$ is equal to $\l_t-\mu_{i-1}$;
			\item \label{item-ric3} $\l$ is an unrefinable partition if and only every length of the hook of the cells of       
			$Y_{S_{\l}}$
			\begin{enumerate}
				\item \label{item-ric3a} is contained in the first column $Y_{S_{\l}}$;\\
				or
				\item \label{item-ric3b} the length of the hook of the cell in the first column and the same row is its double.    
			\end{enumerate}
		\end{enumerate}
	\end{proposition}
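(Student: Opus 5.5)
The plan is to derive all three items from one explicit description of the hook lengths of $Y_{S_\l}$ in terms of gaps and elements of $S_\l$, and then translate unrefinability (Definition~\ref{def_unref}) into a statement about those differences. Recall that $S_\l=\{0\}\cup\mathcal{M}_\l\cup\{\l_t+1,\rightarrow\}$ and $S_\l^c=\{\l_1,\dots,\l_t\}$. In the $\KN$ path, north steps correspond to gaps and east steps to elements of $S_\l$ smaller than $F=\l_t$. Reading bottom-up, the $i$-th row from the top is bounded on the right by the north step of the $i$-th largest gap $g=\l_{t-i+1}$. The $j$-th column is bounded below by the east step of the $j$-th element $s_{j-1}$ of $S_\l$, with $s_0=0$ and $s_{j-1}=\mu_{j-1}$ for $j\ge2$.

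\textbf{Step 1 (hook formula).} The cell $c_{i,j}$ exists exactly when $s_{j-1}<g$. In that case its arm counts the east steps strictly between $s_{j-1}$ and $g$, and its leg counts the north steps strictly between them. Hence
\[
h_{i,j}=g-s_{j-1}=\l_{t-i+1}-s_{j-1}.
\]
This can be checked directly from the path, or obtained from Lemma~\ref{hookinterno} once the first row and first column are known.

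\textbf{Step 2 (items (1) and (2)).} Taking $j=1$ gives $h_{i,1}=\l_{t-i+1}$, which is item (1). Taking $i=1$ gives $h_{1,j}=\l_t-\mu_{j-1}$ for $2\le j\le\#\mathcal{M}_\l+1$, which is item (2). Both agree with Proposition~\ref{riconoscimentoNS}, whose proof in fact does not use closure under addition.

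\textbf{Step 3 (item (3)).} Fix a cell whose row corresponds to the part $g=\l_\ell$ and whose column corresponds to $s=s_{j-1}$, with $j\ge2$, so that $s=\mu_a$ is a missing part. Its hook is $h=g-s$, which satisfies $0<h<g\le\l_t$. Therefore $h$ is either a part of $\l$, and then it appears in the first column by item (1), or a missing part $\mu_b$. In the second case $\l_\ell=\mu_a+\mu_b$.
\begin{itemize}
\item If $a\ne b$, this is exactly a refinement in the sense of Definition~\ref{def_unref}.
\item If $a=b$, then $h=g/2$, so the first-column hook in the same row, namely $g$, is twice $h$. This is condition (b).
\end{itemize}
Conversely, any refinement $\l_\ell=\mu_a+\mu_b$ with $a\ne b$ yields the cell in row $\l_\ell$ and column $\mu_a$. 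That cell exists because $\mu_a<\l_\ell$, and its hook $\mu_b$ is neither in the first column nor half of $\l_\ell$, so it violates both (a) and (b). Cells in the first column satisfy (a) trivially. Together these give the equivalence in item (3).

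The main obstacle is Step 1: making the correspondence between rows and columns of $Y_{S_\l}$ and gaps and elements precise, including the orientation and indexing conventions. Everything afterwards is bookkeeping. One subtlety to handle explicitly is the case $g-s=s$. It must be allowed, because $a=b$ is not a refinement (the summands must be distinct), and this is precisely why condition (b) appears in item (3).
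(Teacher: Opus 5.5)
Your proof is correct, and it is organized differently from the paper's. The paper obtains items (1) and (2) by pointing to the argument behind Proposition~\ref{riconoscimentoNS}. For item (3) it splits into two cases. If $S_\lambda$ is a numerical semigroup, it invokes item (3) of Proposition~\ref{riconoscimentoNS} to get (a) everywhere. Otherwise it observes that some $2\mu_j\in\lambda$, and argues by contradiction that a cell labelled $x=z-s_i$ violating both (a) and (b) gives the refinement $z=x+s_i$. Its converse again goes through the semigroup criterion ("only (a)" means $S_\lambda$ is a semigroup), plus the remark that $2\mu_j=\mu_j+\mu_j$ is not a refinement.

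You instead derive $h_{i,j}=\lambda_{t-i+1}-s_{j-1}$ for every cell directly from the lattice path. You then run one uniform argument: a hook outside the first column is either a part or a missing part, and in the latter case its row's part is a sum of two missing parts, which are distinct exactly when (b) fails. This makes the semigroup/non-semigroup split and the appeal to Proposition~\ref{riconoscimentoNS} unnecessary. The paper's non-semigroup argument is essentially yours, and it already covers the semigroup case.

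Your converse, by contrapositive, is also more explicit than the paper's. A refinement $\lambda_\ell=\mu_a+\mu_b$ with $a\neq b$ produces the explicit cell in the row of $\lambda_\ell$ and the column of $\mu_a$, whose hook $\mu_b$ violates both conditions. The paper never spells out why no refinement of a part other than $2\mu_j$ can occur.

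What the paper's route buys is framing the criterion as a direct strengthening of the numerical-semigroup characterization. What yours buys is a self-contained argument valid for every partition into distinct parts. That is the right reading of the hypothesis, since item (3) is stated as an equivalence.
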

	
	\begin{proof}
The proofs of items~(\ref{item-ric1}) and~(\ref{item-ric2}) follow exactly the same arguments as in Proposition~\ref{riconoscimentoNS} givein in~\cite{tutacs2019young}, and they hold in general for all partitions into distinct parts.

Let now $\l$  be an unrefinable partition. If $\l$ corresponds to a numerical semigroup $S_{\l}$, then, by Proposition~\ref{riconoscimentoNS}(\ref{riconoscimentoNS3}), all the cells contain numbers appearing in the first column, so the statement (\ref{item-ric3a}) is proved. If $S_{\l}\notin \NSg$, then there exists $\mu_j\in\mathcal{M}_{\l}$ such that $k\mu_j\in\l$, for some $k > 1$. 
In particular, the unrefinability of $\lambda$ forces $k = 2$. 
Since $2\mu_j \in \lambda$, a row exists by item~(\ref{item-ric1}) whose first cell is labeled $2\mu_j$, and all other cells in that row are of the form $2\mu_j - s_i$, with $s_i \in S_\lambda$ and $s_i < \mu_j$, so in particular there is a cell labeled $2\mu_j - \mu_j = \mu_j$.
Suppose that neither condition~(\ref{item-ric3a}) nor condition~(\ref{item-ric3b}) is satisfied.  
Since condition~(\ref{item-ric3a}) fails, there exists a cell labeled $x$ that does not appear in the first column; by item~(\ref{item-ric1}), this implies $x \notin \lambda$. Let $z$ be the first cell in the same row as $x$, so that $x = z - s_i$ for some $s_i \in S_\lambda$. Negating condition~(\ref{item-ric3b}) gives $z \neq 2x$ and $z = x + s_i$, which leads to a contradiction because $\lambda$ would then be refinable.

Conversely, if only condition~(\ref{item-ric3a}) is satisfied, then $S_\lambda$ is a numerical semigroup, and consequently $\lambda$ is unrefinable.  
If either condition~(\ref{item-ric3a}) or (\ref{item-ric3b}) holds, there exists a cell outside the first column labeled $\mu_j$, whose row begins with $2\mu_j$. By item~(\ref{item-ric1}), we then have $2\mu_j \in \lambda$, which cannot be expressed as $\mu_j + \mu_j$, so the partition remains unrefinable.
	\end{proof} 
	
	\begin{example}
Consider the unrefinable partition $\lambda = (1,2,5,6,8)$.  
If we label each cell of the corresponding Young diagram $Y_{S_\l}$ with its hook length, we obtain

\[
\begin{tikzpicture}
    \draw (0,0)--(0.5,0)--(0.5,1)--(1.5,1)--(1.5,2)--(2,2)--(2,2.5)--(0,2.5)--(0,0);
    \draw (0.5,2.5)--(0.5,1);
    \draw (1,2.5)--(1,1);
    \draw (1.5,2.5)--(1.5,2);
    \draw (0,0.5)--(0.5,0.5);
    \draw (0,1)--(0.5,1);
    \draw (0,1.5)--(1.5,1.5);
    \draw (0,2)--(1.5,2);
    
    \node at (0.25,0.25) {$1$};
    \node at (0.25,0.75) {$2$};
    \node at (0.25,1.25) {$5$};
    \node [color=purple] at (0.25,1.75) {$6$};
    \node [color=NavyBlue] at (0.25,2.25) {$8$};
    
    \node at (0.75,1.25) {$2$};
    \node [color=purple] at (0.75,1.75) {$3$};
    \node at (0.75,2.25) {$5$};
    
    \node at (1.25,1.25) {$1$};
    \node at (1.25,1.75) {$2$};
    \node [color=NavyBlue] at (1.25,2.25) {$4$};
    
    \node at (1.75,2.25) {$1$};
\end{tikzpicture}
\]

\medskip

\noindent
Here, the numbers are colored to emphasize the elements relevant to condition (\ref{item-ric3b}) in Proposition~\ref{riconoscimentounref}. From the diagram, we can see that all the requirements are satisfied, and therefore the partition $\lambda$ is unrefinable.
\end{example}
	
Using the general criterion for unrefinability derived from the Young diagram via the KN transformation, we  recover now the  correspondence between maximal unrefinable partitions of given weight and maximal part and suitable partitions of into distinct parts. In Sec.~\ref{sec:tri}, we  show that the unrefinable partitions in $\widetilde{\mathcal{U}}$ of triangular weight $T_n$ with $n = 2k-1$, which attain the maximal part $\lambda_t = 2n-4$, are in one-to-one correspondence with the partitions of $k$ into distinct parts.
In the following section, as well as in the others, our strategy will be to work systematically with $\overline{\mathcal{U}}$ rather than $\widetilde{\mathcal{U}}$. The reason is that the partitions excluded in passing from $\widetilde{\mathcal{U}}$ to $\overline{\mathcal{U}}$, enumerated in Tab.~\ref{tab:exceptions}, are exceptional and do not follow the general structural pattern.  These exceptional cases will nevertheless be taken into account separately when performing the final counting arguments.
	
\section{Triangular weight}\label{sec:tri}

In this section, we aim to prove the bijection 
\(\mup{T_{n}} \leftrightarrow \mathbb{D}^{\,\text{e}}_{2k} \leftrightarrow \mathbb{D}_k\), where \(n= 2k-1\)  and \(k \ge 4\).  
We establish this correspondence by constructing a double embedding, presented in the two subsections that follow.

\subsection{Proof of \(\mup{T_{n}} \hookrightarrow  \mathbb{D}_k\)}
As anticipated, we focus on the partitions \(\lambda = (\lambda_1, \dots, \lambda_t) \in \overline{\mathcal{U}}_{T_n}\), which are maximal unrefinable partitions with the largest possible number of missing parts.
In this case, \(\overline{\mathcal{U}}_{T_n} = \mup{T_n} \setminus \{\widetilde{\pi}_n\)\}. To achieve our claim, for a given partition \(\lambda\), we consider its associated numerical set \(S_\lambda\) 
and apply the Keith-Nath transformation to construct the corresponding Young diagram $Y_{S_\l}$.\\

Let $\lambda = (\lambda_1, \dots, \lambda_t) \in \overline{\mathcal{U}}_{T_n} = \mup{T_n} \setminus \{\widetilde{\pi}_n\}$ with $n = 2k-1$ and \(k \ge 4\), and let $S_\lambda \in \NS$ be the numerical set such that $S_\lambda^c = \lambda$. We consider the Young diagram
\[
Y_{S_\lambda} = \KN(S_\lambda),
\]
constructed from $S_\lambda$ via the Keith-Nath procedure.  

First, we observe that if $\lambda \in \overline{\mathcal{U}}_{T_n}$, then the largest part satisfies $\lambda_t = 2n-4$, and the number of parts is $t = \lfloor \frac{\lambda_t}{2} \rfloor = n-2$.  We now proceed to establish some structural properties of $Y_{S_\lambda}$. In particular,  we are going to show that the Young diagram $Y_{S_\lambda}$ (with $\l$ as in this section) is \emph{quasi-symmetric}, i.e.\ symmetric except for an additional column that is appended just after the main diagonal.

We start by presenting two results showing the structure of the first row and of  the first column, and the fact that the first row contains a cell labeled $n-2$. 

	\begin{lemma}\label{lemLR}
	Using the above notation, the following claim holds:
	\begin{enumerate}
	\item \label{C1} the first column of $Y_{S_\lambda}$ contains $n-2$ cells, and the hook length of the cell in the $i$-th row of this column is
\[
h_{i,1} = \lambda_{n-2-(i-1)};
\]
		\item \label{R1} the first row of $Y_{S_\lambda}$ contains $n-1$ cells, and the hook length of the cell in the $i$-th column of this row is
\[
h_{1,i} = \lambda_{n-2} - s_{i-1}.
\]
	\end{enumerate}
		
	\end{lemma}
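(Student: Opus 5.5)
The plan is to read everything off the Keith--Nath construction, using the counts already fixed for $\lambda\in\overline{\mathcal U}_{T_n}$, namely $\lambda_t=2n-4$ and $t=n-2$. We then invoke Proposition~\ref{riconoscimentounref}, items~(\ref{item-ric1}) and~(\ref{item-ric2}), which hold for every partition into distinct parts.

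For item~(\ref{C1}), recall that $Y_{S_\lambda}$ has exactly $G(S_\lambda)$ rows. Here $G(S_\lambda)=\#S_\lambda^c=t=n-2$, since every north step of the lattice path corresponds to a gap, i.e.\ a part of $\lambda$. Every row is nonempty because the path starts at $0\in S_\lambda$ with an east step, so the first column has exactly $n-2$ cells. The hook-length formula $h_{i,1}=\lambda_{t-i+1}$ is item~(\ref{item-ric1}) of Proposition~\ref{riconoscimentounref}, and substituting $t=n-2$ gives $h_{i,1}=\lambda_{n-2-(i-1)}$.

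For item~(\ref{R1}), the number of columns equals the number of east steps taken for $0\le k\le F(S_\lambda)=\lambda_t$. These are the elements of $S_\lambda$ below $\lambda_t$, namely $0$ together with the missing parts. Their number is
\[
1+\#\mathcal M_\lambda=1+\left\lfloor \tfrac{\lambda_t}{2}\right\rfloor=1+(n-2)=n-1,
\]
where we use the defining property of $\overline{\mathcal U}_{T_n}$. Alternatively, $\lambda_t+1-t=2n-3-(n-2)=n-1$ gives the same count. The first row reaches the last column because the path ends with a north step at $\lambda_t$, so the top row has length $n-1$.

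The hook of the top cell in column $i$ then equals $F(S_\lambda)-s_{i-1}=\lambda_t-s_{i-1}=\lambda_{n-2}-s_{i-1}$, with $s_0=0$ and $s_j=\mu_j$. This is Proposition~\ref{riconoscimentoNS}(\ref{riconoscimentoNS2}), or Proposition~\ref{riconoscimentounref}(\ref{item-ric2}) for $i\ge 2$. For $i=1$ it gives $\lambda_t$, consistent with item~(\ref{C1}).

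The only step needing care is the indexing: the columns are indexed by the elements $s_0=0<s_1<\dots<s_{n-2}$ of $S_\lambda$ below $F$, and the rows run top to bottom from the largest gap. Once these conventions are fixed, both statements are direct substitutions, so I expect no real obstacle.
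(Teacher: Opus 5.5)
Your proposal is correct and follows essentially the same route as the paper: both items are read off from Proposition~\ref{riconoscimentounref}(\ref{item-ric1})--(\ref{item-ric2}), using $|\lambda|=n-2$ and the row count $\#R_1=\#(\{0\}\cup\mathcal{M}_\lambda)=1+\#\mathcal{M}_\lambda=n-1$ for $\lambda\in\overline{\mathcal{U}}_{T_n}$. You also spell out some lattice-path bookkeeping that the paper leaves implicit: that every row is nonempty, that the top row spans all columns, and the case $i=1$ of the first-row hook formula.
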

	
	\begin{proof}
	We prove each statement in turn.
	\begin{enumerate}
	\item By Theorem~\ref{riconoscimentounref}(\ref{item-ric1}), the hook length $h_{i,1}$ corresponds to the $i$-th largest element of $S_{\l}^c$. Hence, we obtain $\#C_1=\lvert\l\rvert=n-2$ and $h_{i,1}=\l_{n-2-(i-1)}$.
	\item 	By Theorem~\ref{riconoscimentounref}(\ref{item-ric2}), we have
		\[\#R_1=\#\{s_i\in S_{\l}\mid s_i<\l_{n-2}=2n-4\}=\#\{0\cup\mathcal{M}_{\l}\}.\]
		Therefore, $\#R_1=n-1$ and moreover $h_{1,i}=F(S_{\l})-s_{i-1}=\l_{n-2}-s_{i-1}$.\qedhere
\end{enumerate}
		 	\end{proof}

\begin{lemma}\label{sz}
		There exists an integer $1\leq z\leq n-2$ such that
		$$
		h_{1,z+1}=n-2.
		$$
	\end{lemma}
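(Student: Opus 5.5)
By Lemma~\ref{lemLR}(\ref{R1}), the hook lengths in the first row are $h_{1,i}=\lambda_{n-2}-s_{i-1}=2n-4-s_{i-1}$ for $1\le i\le n-1$. Here $s_0=0<s_1<\dots<s_{n-2}$ are the elements of $S_\lambda$ below $2n-4$, namely $0$ together with the $n-2$ missing parts $\mu_1<\dots<\mu_{n-2}$. The plan is to show that $n-2$ occurs as some $2n-4-s_{z}$ with $1\le z\le n-2$. Equivalently, we must show that $s_z=n-2$ for such a $z$, that is, $n-2=\lambda_t/2$ is a missing part of $\lambda$.

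This is exactly the first observation of Remark~\ref{rmk:propbarU}. Since $\lambda\in\overline{\mathcal U}_{T_n}$, the partition cannot contain $\lambda_t/2=n-2$. Moreover $n-2<\lambda_t=2n-4$ because $n\ge 7$. Hence $n-2\in\mathcal M_\lambda$, so $n-2=\mu_j$ for some $1\le j\le n-2$. With $z=j$ we have $s_z=\mu_j=n-2$. Therefore $h_{1,z+1}=2n-4-(n-2)=n-2$, and the index $z+1$ lies in $\{2,\dots,n-1\}$, which is within the first row of length $n-1$.

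If one prefers not to rely on the cited classification for $n-2\notin\lambda$, I would argue directly, and this is the only step needing care. Use the symmetry $x\in\lambda\iff 2n-4-x\notin\lambda$ for $x\ne n-2$, which comes from the count $\#\mathcal M_\lambda=n-2$ together with unrefinability. The values $\{1,\dots,2n-5\}\setminus\{n-2\}$ split into $n-3$ pairs $\{x,2n-4-x\}$. Unrefinability forbids both members of a pair from being missing, since their sum is $\lambda_t\in\lambda$. So each pair contributes at most one missing part, for at most $n-3$ in total. Since there are exactly $n-2$ missing parts, $n-2$ itself must be missing. The main obstacle is thus only this counting step; the remainder is bookkeeping with Lemma~\ref{lemLR}.
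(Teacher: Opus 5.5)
Your proof is correct and is the paper's argument: $n-2=\lambda_t/2\notin\lambda$ (Remark~\ref{rmk:propbarU}), so $n-2=s_z$ for some $1\le z\le n-2$, and then $h_{1,z+1}=2n-4-(n-2)=n-2$. Your supplementary counting argument is also valid and removes the reliance on the cited classification: the $n-3$ pairs $\{x,2n-4-x\}$ with $x\neq n-2$ each contain at most one of the $n-2$ missing parts, which forces $n-2\in\mathcal{M}_\lambda$. Note that this argument derives the symmetry you announce at its start rather than using it.
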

	
	\begin{proof}
	Since $\l\in\overline{\mathcal{U}}_{T_n}$, we have $n-2\notin\l=S_{\l}^c$. Hence, there exists an element $s_z\in S_{\l}$ with $s_z=n-2$. It then follows that
		\[
		h_{1,z+1}=\l_{n-2}-s_z=n-2.\qedhere
		\]
\end{proof}

The following lemma  highlights an almost perfect symmetry between the first row and the first column of the Young diagram $Y_{S_\lambda}$. Specifically, for smaller indices $0 \le i \le z-1$, the hook length of the cell in the $(i+1)$-th row of the first column coincides with that of the cell in the first row and $(i+1)$-th column. For larger indices $z+1 \le i \le n-2$, the hook length of the cell in the first row and $(i+1)$-th column matches that of the cell in the first column and $i$-th row. In other words, the main part of the first row and first column is mirrored, except for a slight shift corresponding to the cell associated with $z$. 

\begin{lemma}\label{simmetriaC1R1}
		Using the above notation and letting \(z\) be the integer as in Lemma~\ref{sz}, we have that if $0\leq i\leq z-1$, then
		\[
		h_{i+1,1}=h_{1,i+1},
		\]
		whereas if $z+1\leq i\leq n-2$, then
		\[
		h_{1,i+1}=h_{i,1}.
		\]
	\end{lemma}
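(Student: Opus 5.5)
The plan is to compute both sides explicitly using the formulas of Lemma~\ref{lemLR}. Then I would match them through the involution $x\mapsto \lambda_t-x=2n-4-x$ supplied by Remark~\ref{rmk:propbarU}.

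\emph{Notation.} Since $\lambda\in\overline{\mathcal U}_{T_n}$, we have $\lambda_{n-2}=\lambda_t=2n-4$. The elements of $S_\lambda$ below $2n-4$ are $s_0=0<s_1<\dots<s_{n-2}$, where $s_i=\mu_i$ for $i\ge 1$; there are exactly $n-2$ missing parts. By Lemma~\ref{lemLR}:
\begin{itemize}
\item $h_{1,i+1}=2n-4-s_i$;
\item $h_{i+1,1}=\lambda_{n-2-i}$, which is the $(i+1)$-th largest part of $\lambda$.
\end{itemize}
By Lemma~\ref{sz}, $s_z=n-2$. Hence $s_1,\dots,s_{z-1}$ are exactly the missing parts smaller than $n-2$, and $s_{z+1},\dots,s_{n-2}$ are those larger than $n-2$.

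\emph{Step 1: the parts above $n-2$.} By Remark~\ref{rmk:propbarU}, for $x\neq n-2$ we have $x\in\lambda$ if and only if $2n-4-x\notin\lambda$. So the map $x\mapsto 2n-4-x$ sends the missing parts in $(0,n-2)$ bijectively onto the parts of $\lambda$ in $(n-2,2n-4)$, reversing order. Adding $s_0=0\mapsto 2n-4=\lambda_{n-2}$, the values $2n-4-s_0>2n-4-s_1>\dots>2n-4-s_{z-1}$ are exactly the parts of $\lambda$ exceeding $n-2$, listed in decreasing order. In particular there are exactly $z$ such parts. Therefore, for $0\le i\le z-1$, the $(i+1)$-th largest part equals $2n-4-s_i$, which gives $h_{i+1,1}=h_{1,i+1}$.

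\emph{Step 2: the parts below $n-2$.} Since $n-2\notin\lambda$, the parts of $\lambda$ ranked $z+1,z+2,\dots,n-2$ in decreasing order are precisely those smaller than $n-2$. The same involution maps the missing parts $s_{z+1}<\dots<s_{n-2}$ in $(n-2,2n-4)$ bijectively and order-reversingly onto these parts. The counts agree: $n-2-z$ on each side. Hence $2n-4-s_i$ is the $i$-th largest part for $z+1\le i\le n-2$. The index shift by one comes from the skipped value $s_z=n-2$, which has no partner. So $h_{1,i+1}=\lambda_{n-2-(i-1)}=h_{i,1}$.

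\emph{Main obstacle.} The delicate point is the index bookkeeping: verifying that exactly $z$ parts lie above $n-2$. I would handle this by counting both sides of the involution explicitly, with the endpoint pair $0\leftrightarrow 2n-4$ added. The case $i=z$ is deliberately excluded, because $h_{1,z+1}=n-2$ has no mirror in the first column.
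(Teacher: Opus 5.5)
Your proof is correct and follows essentially the same route as the paper. Both arguments read $h_{1,i+1}=\lambda_{n-2}-s_i$ and $h_{i+1,1}=\lambda_{n-2-i}$ off Lemma~\ref{lemLR}, split at $s_z=n-2$, and match the two sides through the involution $x\mapsto \lambda_t-x$ of Remark~\ref{rmk:propbarU}. Your explicit count (exactly $z$ parts exceed $n-2$, counting the endpoint pair $0\leftrightarrow 2n-4$) makes rigorous the index shift that the paper only asserts.
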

	\begin{proof}
First, suppose that $i \le z-1$. In this case, we consider the elements $s_i \in S_\lambda$ with $s_i < s_z$, so that $s_i \in \{0\} \cup \mathcal{M}_\lambda$. Since $\lambda \in \overline{\mathcal{U}}_{T_n}$, for any $x \neq \frac{\lambda_t}{2}$, we have $x \in \lambda$ if and only if $\lambda_t - x \in \mathcal{M}_\lambda$ (cf.\ also Remark~\ref{rmk:propbarU}). It follows that 
\[
\lambda_{n-2} - s_i = \lambda_{n-2-i},
\] 
and thus
\[
h_{i+1,1} = \lambda_{n-2-i} = \lambda_{n-2} - s_i = h_{1,i+1}.
\]

Next, suppose that $i \ge z+1$. In this case, $s_i > s_z$, and we similarly obtain
\[
\lambda_{n-2} - s_i = \lambda_{n-2-(i-1)},
\] 
which implies
\[
h_{i,1} = \lambda_{n-2-(i-1)} = \lambda_{n-2} - s_i = h_{1,i+1}.\qedhere
\]
	\end{proof}
	
	Fig.~\ref{fig:shape1} provides a schematic illustration of the symmetry between the first row and the first column in the Young diagram of a maximal unrefinable partition in $\overline{\mathcal{U}}_{T_n}$, highlighting the special cell corresponding to the index $z$. This diagram visually reflects the properties we have established so far.
	
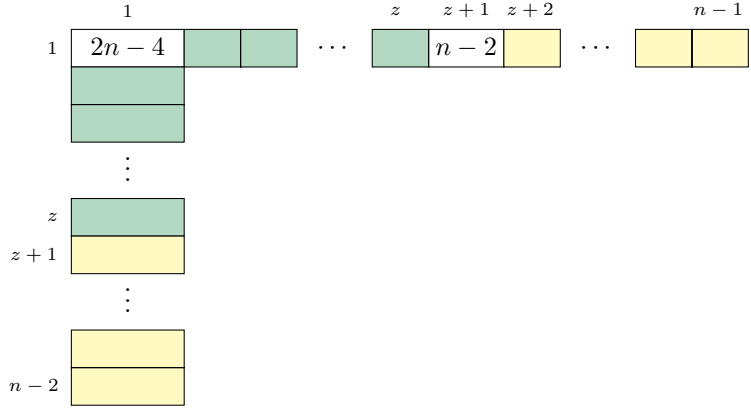
\begin{figure}	
	\begin{tikzpicture}
		\draw (1,0) rectangle (-0.5,-0.5);
		\draw[color=black, fill=ForestGreen!30] (-0.5,-0.5) rectangle (1,-1) rectangle (-0.5,-1.5);
		\draw[color=black, fill=ForestGreen!30] (-0.5,-2.25) rectangle (1,-2.75);
		\draw[color=black, fill=yellow!30] (1,-2.75) rectangle (-0.5,-3.25);
		\draw[color=black, fill=yellow!30] (-0.5,-4) rectangle (1,-4.5) rectangle (-0.5,-5);
		\draw[color=black, fill=ForestGreen!30] (1,0) rectangle (1.75,-0.5) rectangle (2.5,0);
		\draw[color=black, fill=ForestGreen!30] (3.5,0) rectangle (4.25,-0.5);
		\draw (4.25,-0.5) rectangle (5.25,0);
		\draw[color=black, fill=yellow!30] (5.25,0)rectangle (6,-0.5);
		\draw[color=black, fill=yellow!30] (7,0) rectangle (7.75,-0.5) rectangle (8.5,0); 
		\node at (0.25,-1.75) {$\vdots$};
		\node at (0.25,-3.5) {$\vdots$};
		\node at (3,-0.25) {$\cdots$};
		\node at (4.75,-0.25) {$n-2$};
		\node at (6.5,-0.25) {$\cdots$};
		\node at (0.25,-0.25) {$2n-4$};
		\node at (0.25,0.25) {\scriptsize{\textbf{$1$}}};
		\node at (-0.75,-0.25) {\scriptsize{\textbf{$1$}}};
		\node at (3.81,0.25) {\scriptsize{\textbf{$z$}}};
		\node at (4.75,0.25) {\scriptsize{\textbf{$z+1$}}};
		\node at (5.6,0.25) {\scriptsize{\textbf{$z+2$}}};
		\node at (8.1,0.25) {\scriptsize{\textbf{$n-1$}}};
		\node at (-0.75,-2.5) {\scriptsize{\textbf{$z$}}};
		\node at (-1,-3) {\scriptsize{\textbf{$z+1$}}};
		\node at (-1,-4.75) {\scriptsize{\textbf{$n-2$}}};
	\end{tikzpicture}
	\caption{Symmetry between first row and first column in $Y_{S_\lambda}$, highlighting the special cell $z$.}
		\label{fig:shape1}	
\end{figure}

The following result is an application of Lemma~\ref{hookinterno} to the specific case of Young diagrams arising from maximal unrefinable partitions in $\overline{\mathcal{U}}_{T_n}$.

\begin{corollary}
		Let $\l\in \overline{\mathcal{U}}_{T_n}$ and let $Y_{S_{\l}}$ be the associated Young diagram. If $2\leq i\leq z$,
		\begin{equation}\label{equazionehookdq}
			h_{i,j}=\l_{n-2}-s_{i-1}-s_{j-1},
		\end{equation}
		whereas if $z+1\leq i\leq n-2$,
		\begin{equation}\label{equazionehookfq}
			h_{i,j}=\l_{n-2}-s_i-s_{j-1}.
		\end{equation}
	\end{corollary}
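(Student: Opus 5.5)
We deduce the corollary directly from Lemma~\ref{hookinterno}, which expresses an interior hook length through three boundary quantities:
\[
h_{i,j}=h_{1,j}+h_{i,1}-h_{1,1}.
\]
For $i=1$ or $j=1$ the formulas reduce to the boundary values already known, so we assume $i,j\ge 2$. The boundary hooks are supplied by Lemma~\ref{lemLR} and Lemma~\ref{simmetriaC1R1}.

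By Lemma~\ref{lemLR}(\ref{R1}), the first row gives $h_{1,j}=\l_{n-2}-s_{j-1}$ for every column index $j$. For the corner cell, $h_{1,1}=\l_{n-2}$, by Lemma~\ref{lemLR}(\ref{C1}) with $i=1$, or equivalently by (\ref{R1}) with $s_0=0$.

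It remains to rewrite the first-column entry $h_{i,1}$ in terms of elements of $S_\l$. Here we use Lemma~\ref{simmetriaC1R1}, splitting according to the position of $i$ relative to $z$.

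\textbf{Case $2\le i\le z$.} Apply the first part of Lemma~\ref{simmetriaC1R1} with index $i-1\le z-1$. This gives $h_{i,1}=h_{1,i}=\l_{n-2}-s_{i-1}$. Substituting into Lemma~\ref{hookinterno},
\[
h_{i,j}=(\l_{n-2}-s_{j-1})+(\l_{n-2}-s_{i-1})-\l_{n-2}=\l_{n-2}-s_{i-1}-s_{j-1},
\]
which is Eq.~\eqref{equazionehookdq}.

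\textbf{Case $z+1\le i\le n-2$.} Apply the second part of Lemma~\ref{simmetriaC1R1} with the same index $i$. This gives $h_{i,1}=h_{1,i+1}=\l_{n-2}-s_i$. The same substitution yields
\[
h_{i,j}=\l_{n-2}-s_i-s_{j-1},
\]
which is Eq.~\eqref{equazionehookfq}.

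The only delicate point is keeping the indexing shifts consistent. Lemma~\ref{simmetriaC1R1} is stated with a shifted index, so in the first case it must be applied at $i-1$ and in the second case at $i$. In the second case, $s_i$ must also be a legitimate element of $S_\l$, i.e.\ $i\le n-2<n-1=\#R_1$, so that column $i+1$ exists in the first row; this holds by Lemma~\ref{lemLR}(\ref{R1}). Finally, the identities are only meaningful when the cell $c_{i,j}$ actually lies in the diagram. We state them for such cells, which is all Lemma~\ref{hookinterno} requires.
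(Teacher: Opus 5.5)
Your proof is correct and follows the paper's own argument: substitute the first-row values from Lemma~\ref{lemLR} and the first-column values from Lemma~\ref{simmetriaC1R1} (applied at index $i-1$ when $2\le i\le z$, and at index $i$ when $z+1\le i\le n-2$) into the identity of Lemma~\ref{hookinterno}. Your remarks on the index shifts and on the existence of column $i+1$ are accurate, and they spell out bookkeeping that the paper leaves implicit.
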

	
	\begin{proof}
		Suppose $2\leq i\leq z$. By Eq.~\eqref{equazionehookinterno} and Lemma~\ref{simmetriaC1R1}, we have
		\[
		h_{i,j}=h_{1,j}+h_{i,1}-h_{1,1}=\l_{n-2}-s_{j-1}+\l_{n-2}-s_{i-1}-\l_{n-2},
		\]
		which gives Eq.~\eqref{equazionehookdq}.
		
		Now suppose $z+1\leq i\leq n-2$. By Lemma~\ref{simmetriaC1R1}, we know that $h_{i,1}=h_{1,i+1}=\l_{n-2}-s_i$. Substituting this into Eq.~\eqref{equazionehookinterno} yields Eq.~\eqref{equazionehookfq}.
	\end{proof}
	
We now show that the diagram contains a principal square starting from the top-left corner, and we are interested in understanding its structure and the column immediately following it. In particular, a potential cell in position $(z+1,z+1)$ would have a negative hook and therefore would not exist. Consequently, the column immediately after this square contains exactly $z$ cells, and the main diagonal itself consists precisely of $z$ cells. These observations are formalized in the following lemma.

\begin{lemma}\label{lemma:main_diagonal}
Consider the Young diagram $Y_{S_\lambda}$ associated to a maximal unrefinable partition $\lambda \in \overline{\mathcal{U}}_{T_n}$. Then the following properties hold:

\begin{enumerate}
    \item \label{lemma:main_diagonal1} the cell $c_{z+1,z+1}$ does not exist in the diagram;
    \item \label{lemma:main_diagonal2} the column $C_{z+1}$ contains exactly $z$ cells;
    \item \label{lemma:main_diagonal3} the main diagonal of $Y_{S_\lambda}$ contains exactly $z$ cells; equivalently,
    \[
    h_{z,z} > 0 \quad\text{and}\quad h_{z+1,z+1} < 0.
    \]
\end{enumerate}
\end{lemma}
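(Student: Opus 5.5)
The plan is to read off the column heights directly from the Keith--Nath path, and then to cross-check the result against the hook formulas of the Corollary above.

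Recall how the path encodes the diagram: the $j$-th column of $Y_{S_\lambda}$ corresponds to the east step labelled $s_{j-1}\in S_\lambda$. Its height is therefore the number of north steps after it, that is,
\[
\#C_j=\#\{x\in\lambda \mid x>s_{j-1}\}.
\]

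First, apply this to $C_{z+1}$, where $s_z=n-2$ by Lemma~\ref{sz}. We must count the parts $x\in\lambda$ with $n-2<x\le 2n-4$. By Remark~\ref{rmk:propbarU}, for $x\neq n-2=\frac{\lambda_t}{2}$ we have $x\in\lambda$ if and only if $2n-4-x\notin\lambda$. Hence the map $x\mapsto 2n-4-x$ is a bijection from these parts onto the elements of $\{0\}\cup\mathcal{M}_\lambda$ lying strictly below $n-2$. The endpoint $x=2n-4$ is sent to $0$.

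The elements of $S_\lambda$ below $s_z=n-2$ are exactly $s_0=0,s_1,\dots,s_{z-1}$, so there are $z$ of them. This gives $\#C_{z+1}=z$, which is item~(\ref{lemma:main_diagonal2}).

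Item~(\ref{lemma:main_diagonal1}) follows at once: the cell $c_{z+1,z+1}$ would be the $(z+1)$-th cell of column $C_{z+1}$, which does not exist.

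For item~(\ref{lemma:main_diagonal3}), note that columns of a Young diagram are weakly decreasing in height. Hence $\#C_z\ge\#C_{z+1}=z$, so $c_{z,z}$ exists. Together with item~(\ref{lemma:main_diagonal1}), the diagonal has exactly $z$ cells. One small check is needed: $z\le n-2=\#C_1$, which holds by Lemma~\ref{sz}, so row $z$ really is a row of the diagram.

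To match the formulation in terms of hooks, I would also evaluate the formal expressions from the Corollary.

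For $i=j=z$, Eq.~\eqref{equazionehookdq} gives $h_{z,z}=2n-4-2s_{z-1}$. This is positive because $s_{z-1}<s_z=n-2$. The degenerate case $z=1$ is handled directly: then $h_{1,1}=2n-4>0$.

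For $i=j=z+1$, Eq.~\eqref{equazionehookfq} gives
\[
h_{z+1,z+1}=\lambda_{n-2}-s_{z+1}-s_z=(2n-4)-s_{z+1}-(n-2)=n-2-s_{z+1}.
\]
This is negative because $s_{z+1}>s_z=n-2$. Here $s_{z+1}$ always exists, since $S_\lambda$ contains every integer beyond $2n-4$.

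The only delicate point is the counting bijection in the first step. One must make sure that the self-paired value $n-2$ (which is $s_z$ itself, not a part) is excluded on both sides, and that $0$ is correctly paired with the largest part $2n-4$. Everything else is bookkeeping.
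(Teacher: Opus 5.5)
Your proof is correct, but it takes a different route from the paper's.

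\textbf{The paper's route.} The paper works entirely with the hook formulas of the Corollary. It evaluates the formal expression \eqref{equazionehookfq} at position $(z+1,z+1)$, finds it negative, and concludes that the cell is absent. Item~(\ref{lemma:main_diagonal2}) then comes from comparing this with the positive hook at $(z,z+1)$, and item~(\ref{lemma:main_diagonal3}) follows from the first two.

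\textbf{Your route.} You compute $\#C_{z+1}$ exactly from the Keith--Nath path, as the number of parts exceeding $s_z=n-2$. The complementarity $x\mapsto 2n-4-x$ of Remark~\ref{rmk:propbarU} matches these parts bijectively with $s_0,\dots,s_{z-1}$. Items~(\ref{lemma:main_diagonal1}) and~(\ref{lemma:main_diagonal3}) then follow from the weak monotonicity of column heights.

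\textbf{Comparison.} The paper's argument is shorter because it reuses machinery already in place. However, it tacitly needs two facts that Lemma~\ref{hookinterno} does not supply, since that lemma is stated only for cells that exist:
\begin{enumerate}
\item the identity $h_{1,j}+h_{i,1}-h_{1,1}=\#R_i-j+\#C_j-i+1$ holds at every position $(i,j)$ of the bounding rectangle;
\item this quantity is $\le -1$ exactly when $c_{i,j}$ is missing.
\end{enumerate}
Your argument avoids this extension entirely. It also makes transparent why the answer is $z$: the parts above $n-2$ are paired with the elements of $S_\lambda$ below $n-2$.

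\textbf{Your cross-check.} Your hook computation also has the indices right. Eq.~\eqref{equazionehookfq} gives $h_{z+1,z+1}=n-2-s_{z+1}$, and Eq.~\eqref{equazionehookdq} gives $h_{z,z+1}=n-2-s_{z-1}$. The paper's displayed computation writes $s_{z+2}$ and $s_z$ instead; the second slip would make the stated value $0$ rather than positive. Both slips are harmless once corrected.
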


\begin{proof}
We prove each statement in turn.
\begin{enumerate}
    \item The hook of the cell $c_{z+1,z+1}$ would be
    $$
    h_{z+1,z+1} = \lambda_{n-2} - s_{z+1} - s_{z+2} = n-2 - s_{z+2} < 0,
    $$
    since $s_{z+2} > s_{z+1}$. Therefore, the cell $c_{z+1,z+1}$ cannot belong to the diagram.
    
    \item From the previous point, $h_{z+1,z+1}<0$, while $h_{z,z+1} = n-2 - s_z > 0$. It follows that the column $C_{z+1}$ contains exactly $z$ cells.
    
    \item By (\ref{lemma:main_diagonal1}), $h_{z+1,z+1} < 0$. On the other hand, $h_{z,z} > 0$, since $h_{z,z+1} > 0$ by (\ref{lemma:main_diagonal2}). Hence the main diagonal of $Y_{S_\lambda}$ contains exactly $z$ cells.
\end{enumerate}
\end{proof}

Fig.~\ref{fig:shape2} provides a graphical illustration of the previous lemma, highlighting the principal square in the Young diagram and the column immediately following it, showing which cells belong to the diagram and which do not.  \\

\begin{figure}	
	\begin{tikzpicture}
		\draw (1,0) rectangle (-0.5,-0.5);
		\draw[color=black, fill=ForestGreen!30] (-0.5,-0.5) rectangle (1,-1) rectangle (-0.5,-1.5);
		\draw[color=black, fill=ForestGreen!30] (-0.5,-2.25) rectangle (1,-2.75);
		\draw[color=black, fill=yellow!30] (1,-2.75) rectangle (-0.5,-3.25);
		\draw[color=black, fill=yellow!30] (-0.5,-4) rectangle (1,-4.5) rectangle (-0.5,-5);
		\draw[color=black, fill=ForestGreen!30] (1,0) rectangle (1.75,-0.5) rectangle (2.5,0);
		\draw[color=black, fill=ForestGreen!30] (3.5,0) rectangle (4.25,-0.5);
		\draw (4.25,-0.5) rectangle (5.25,0);
		\draw[color=black, fill=yellow!30] (5.25,0)rectangle (6,-0.5);
		\draw[color=black, fill=yellow!30] (7,0) rectangle (7.75,-0.5) rectangle (8.5,0); 
		\node at (0.25,-1.75) {$\vdots$};
		\node at (0.25,-3.5) {$\vdots$};
		\node at (3,-0.25) {$\cdots$};
		\node at (4.75,-0.25) {$n-2$};
		\node at (6.5,-0.25) {$\cdots$};
		\node at (0.25,-0.25) {$2n-4$};
		\node at (0.25,0.25) {\scriptsize{\textbf{$1$}}};
		\node at (-0.75,-0.25) {\scriptsize{\textbf{$1$}}};
		\node at (3.81,0.25) {\scriptsize{\textbf{$z$}}};
		\node at (4.75,0.25) {\scriptsize{\textbf{$z+1$}}};
		\node at (5.6,0.25) {\scriptsize{\textbf{$z+2$}}};
		\node at (8.1,0.25) {\scriptsize{\textbf{$n-1$}}};
		\node at (-0.75,-2.5) {\scriptsize{\textbf{$z$}}};
		\node at (-1,-3) {\scriptsize{\textbf{$z+1$}}};
		\node at (-1,-4.75) {\scriptsize{\textbf{$n-2$}}};
		\node at (4.75,-1.5) {$\vdots$};
		\node at (2.5,-2.5) {$\cdots$};
		\node at (2.5,-1.5) {$\ddots$};
		\node at (0.25,-1.75) {$\vdots$};
		\node at (0.25,-3.5) {$\vdots$};
		\node at (3,-0.25) {$\cdots$};
		\node at (6.5,-0.25) {$\cdots$};
		\draw (3.5,-2.25) rectangle (4.25,-2.75);
		\draw (4.25,-2.25) rectangle (5.25,-2.75);
\draw[pattern=north east lines, pattern color=black!55, dashed] 
      (4.25,-2.75) rectangle (5.25,-3.25);

	\end{tikzpicture}
	\caption{Principal square and first column beyond it in $Y_{S_\lambda}$, illustrating the existence of $z$ cells in the diagonal and in column $C_{z+1}$.}
		\label{fig:shape2}	
\end{figure}

The following result formalizes the row-column behaviour one expects from a symmetric Young diagram, except for the single extra column appearing immediately after the main diagonal. This asymmetry is precisely what causes the change of regime at $i=z+1$.

\begin{lemma}\label{lem:simquad}
		Let $1\leq i\leq z$. Then
		$
		\#R_i=\#C_i+1.
		$
		Otherwise, if $z+1\leq i\leq n-2$, we have
		$
		\#R_i=\#C_{i+1}.
		$
	\end{lemma}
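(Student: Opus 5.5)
We read both row and column lengths off the boundary lattice path of the $\KN$ transformation, and use the symmetry of Remark~\ref{rmk:propbarU} to compare them. Write $S_\l=\{0=s_0<s_1<\dots<s_{n-2}\}\cup\{2n-3,\rightarrow\}$, so that $s_1,\dots,s_{n-2}$ are the missing parts and $s_z=n-2$.

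In the path construction, row $R_i$ (counted from the top) corresponds to the gap $\l_{n-1-i}$. Its length is the number of elements of $S_\l$ smaller than that gap:
\[
\#R_i=\#\{r\ge 0 : s_r<\l_{n-1-i}\}.
\]
Column $C_j$ corresponds to the element $s_{j-1}$, and its length is the number of gaps larger than $s_{j-1}$:
\[
\#C_j=\#\{\ell : \l_\ell>s_{j-1}\}.
\]

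The next step rewrites rows via the symmetry. By Lemma~\ref{simmetriaC1R1}, for $1\le i\le z$ we have $\l_{n-1-i}=2n-4-s_{i-1}$, and for $z+1\le i\le n-2$ we have $\l_{n-1-i}=2n-4-s_i$. By Remark~\ref{rmk:propbarU}, $x\mapsto 2n-4-x$ is a bijection from $\{0\}\cup\mathcal M_\l\setminus\{n-2\}$ onto $\l$. It reverses order and fixes only $n-2$, which lies in $S_\l$ but has no partner gap. Hence for a given $s_{j-1}$:
\[
\#C_j=\#\{\text{gaps}>s_{j-1}\}=\#\{r : 2n-4-s_r>s_{j-1},\ r\neq z\}=\#\{r\neq z : s_r<2n-4-s_{j-1}\}.
\]
Comparing with the row formula, with $\l_{n-1-i}=2n-4-s_{i-1}$ for $i\le z$:
\[
\#R_i=\#\{r : s_r<2n-4-s_{i-1}\}.
\]
This is exactly $\#C_i$ plus one for the index $r=z$, provided $s_z=n-2<2n-4-s_{i-1}$. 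That condition holds because $s_{i-1}<n-2$ when $i\le z$. So $\#R_i=\#C_i+1$.

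For $i\ge z+1$, the row formula reads $\#R_i=\#\{r : s_r<2n-4-s_i\}$. Here $s_i>n-2$, so $2n-4-s_i<n-2=s_z$ and the index $z$ is not counted. Comparing with $\#C_{i+1}$, computed from $s_i$ (the term $r=z$ is excluded in both), gives $\#R_i=\#C_{i+1}$.

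The main obstacle is bookkeeping rather than ideas. One must fix the row and column indexing conventions of the $\KN$ path consistently with Lemma~\ref{lemLR}, and handle the boundary cases: $i=1$, where $s_0=0$ and the row length $n-1$ should match $\#C_1+1=n-2+1$, and columns beyond $n-1$, which are empty. Alternatively, one could argue through hooks, via Lemma~\ref{hookinterno} and the Corollary: a cell exists iff its computed hook is positive. Then $c_{i,j}\in R_i$ iff $h_{1,j}>h_{1,1}-h_{i,1}$, and the symmetry of Lemma~\ref{simmetriaC1R1} transfers this to the column condition. I would use that route only as a cross-check.
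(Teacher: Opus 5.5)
Your proof is correct, but it takes a different route from the paper's.

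The paper never goes back to the lattice path. It uses the arm/leg expressions for the hooks in the first column and first row,
$h_{i,1}=\#R_i+\#C_1-i=\#R_i+n-2-i$ and $h_{1,j}=\#R_1-j+\#C_j=\#C_j+n-1-j$, with $\#C_1=n-2$ and $\#R_1=n-1$ from Lemma~\ref{lemLR}. It then substitutes the hook identities of Lemma~\ref{simmetriaC1R1}: $h_{i,1}=h_{1,i}$ for $i\le z$, and $h_{i,1}=h_{1,i+1}$ for $i\ge z+1$. The lemma follows in two lines of arithmetic. (In the second case the paper's displayed identity should read $\#C_{i+1}+n-1-(i+1)$.)

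You instead compute $\#R_i$ and $\#C_j$ directly from the $\KN$ path as counting functions: elements of $S_\lambda$ below a gap for rows, gaps above an element of $S_\lambda$ for columns. You then compare them through the order-reversing involution $x\mapsto 2n-4-x$ of Remark~\ref{rmk:propbarU}.

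What each route buys:
\begin{itemize}
\item The paper's route is shorter, because it reuses the already-established first-row/first-column symmetry.
\item Yours is longer and essentially re-derives that symmetry, but it explains where the asymmetry comes from. The only element of $S_\lambda$ below $2n-4$ with no partner gap is $s_z=n-2$. This unpaired element gives the $+1$ for rows $i\le z$, where $n-2<2n-4-s_{i-1}$ holds. It also gives the index shift for $i\ge z+1$, where it is no longer counted. In other words, the extra column $C_{z+1}$ is precisely the column of $s_z$.
\end{itemize}

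Your closing concern about boundary cases is unnecessary. The counting argument handles $i=1$ and $i=n-2$ uniformly; for $i=n-2$ the partner column $C_{n-1}$ is simply the column of $s_{n-2}$.
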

	\begin{proof}
		Fix $1\leq i\leq z$. From
		\begin{equation}\label{RCQ}
			\#R_i+n-2-i=h_{i,1}=h_{1,i}=\#C_i+n-1-i
		\end{equation}
		it follows immediately that $\#R_i=\#C_i+1$.
		
		Now let $z+1\leq i\leq n-2$. In this case $h_{i,1}=h_{1,i+1}$, hence 
		\begin{equation}\label{RCFQ}
			\#R_i+n-2-i=\#C_{i+1}+n-1-i+1,
		\end{equation}
		which gives $\#R_i=\#C_{i+1}$.
	\end{proof}

We now prove the hook-symmetry properties of \(Y_{S_\lambda}\). 
Inside the \(z\times z\) square determined by the main diagonal, the hooks behave exactly as in a symmetric Young diagram. 
Beyond this square, the presence of the extra column creates a shifted symmetry, pairing hooks in row \(i\) with those in column \(i+1\). 
The following result formalizes these two regimes.

\begin{lemma}\label{hooksymmetryglobal}
The hook structure of \(Y_{S_\lambda}\) satisfies:
\begin{enumerate}
    \item if \(1\le i,j \le z\), then
    \[
        h_{i,j}=h_{j,i};
    \]
    \item if \(z+1 \le i \le n-2\), then for every \(1 \leq j \leq z\),
    \[
        h_{i,j}=h_{j,i+1}.
    \]
\end{enumerate}
\end{lemma}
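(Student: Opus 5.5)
The plan is to reduce both statements to the explicit hook formulas of the Corollary, Eqs.~\eqref{equazionehookdq} and~\eqref{equazionehookfq}, together with the first row/column symmetry of Lemma~\ref{simmetriaC1R1}. Everything rests on Lemma~\ref{hookinterno}, which expresses any hook as $h_{i,j}=h_{1,j}+h_{i,1}-h_{1,1}$. The statement then becomes a symmetry of the formula in the two indices, once the index shift at $z$ is tracked.

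For item (1), take $1\le i,j\le z$. If $i=1$ or $j=1$, the claim is exactly Lemma~\ref{simmetriaC1R1} (first case, with index $\le z-1$) or trivial. Otherwise $2\le i,j\le z$, so Eq.~\eqref{equazionehookdq} gives
\[
h_{i,j}=\l_{n-2}-s_{i-1}-s_{j-1}.
\]
This expression is symmetric in $i$ and $j$, hence $h_{i,j}=h_{j,i}$. The only point to check is that both cells actually exist. By Lemma~\ref{lemma:main_diagonal} the diagonal has $z$ cells, so $c_{z,z}$ exists, and by the Young diagram shape every cell of the $z\times z$ square is present.

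For item (2), take $z+1\le i\le n-2$ and $1\le j\le z$.
\begin{itemize}
\item If $j=1$, the claim $h_{i,1}=h_{1,i+1}$ is the second case of Lemma~\ref{simmetriaC1R1}.
\item If $j\ge 2$, Eq.~\eqref{equazionehookfq} gives $h_{i,j}=\l_{n-2}-s_i-s_{j-1}$. For the cell $c_{j,i+1}$ the row index satisfies $2\le j\le z$, so Eq.~\eqref{equazionehookdq} applies with the roles swapped and gives
\[
h_{j,i+1}=\l_{n-2}-s_{j-1}-s_{(i+1)-1}=\l_{n-2}-s_{j-1}-s_i.
\]
The two expressions coincide.
\end{itemize}

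The main obstacle is existence, that is, ensuring $c_{i,j}$ lies in the diagram exactly when $c_{j,i+1}$ does, so that the hooks are genuine cells rather than formal values. I would handle this with Lemma~\ref{lem:simquad}: $\#R_i=\#C_{i+1}$ for $i\ge z+1$, and $\#R_j=\#C_j+1$ for $j\le z$. I would first try to show that $j\le \#R_i$ is equivalent to $i+1\le \#R_j$ via these row/column counts. If the bookkeeping resists, the fallback is that the formal hook value is positive precisely when the cell exists; this is the criterion used in Lemma~\ref{lemma:main_diagonal}. Since the two formal values are equal, one cell exists if and only if the other does.
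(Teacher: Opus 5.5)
Your proposal is correct and follows the paper's proof. Item (2) is exactly the paper's computation via Eqs.~\eqref{equazionehookfq} and~\eqref{equazionehookdq}. For item (1) the paper uses the row/column counts $\#R_i=\#C_i+1$ of Lemma~\ref{lem:simquad} rather than the symmetric closed form $\l_{n-2}-s_{i-1}-s_{j-1}$, which is an inessential difference. Your extra check that $c_{i,j}$ exists if and only if $c_{j,i+1}$ does (immediate from $\#R_i=\#C_{i+1}$) is a welcome precision that the paper leaves implicit.
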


\begin{proof}
We prove each statement in turn.
\begin{enumerate}
    \item For \(1\le i,j\le z\), by Lemma~\ref{lem:simquad} we have
    \[
    \begin{aligned}
        h_{i,j}
        &=\#R_i-(j-1)+\#C_j-(i-1)-1\\
        &=\#C_i-(j-1)+\#R_j-(i-1)-1\\
        &=h_{j,i}.
    \end{aligned}
    \]

    \item For \(z+1\le i\le n-2\), using Eq.~\eqref{equazionehookfq} and Eq.~\eqref{equazionehookdq}, we have
    \[
        h_{i,j}
        =\lambda_{n-2}-s_i-s_{j-1}
        =\lambda_{n-2}-s_{j-1}-s_i
        =h_{j,i+1}.\qedhere
    \]
\end{enumerate}
\end{proof}

To conclude this list of structural properties, we relate the hooks lying on the main diagonal to those touching the extra column.

\begin{lemma}\label{d2z+1}
Let $1 \le i \le z$. Then the hook length $h_{i,i}$ of the cell on the main diagonal in row $i$ is exactly twice the hook length of the cell in the same row $i$ and in the extra column $C_{z+1}$:
\[
h_{i,i} = 2\,h_{i,z+1}.
\]
\end{lemma}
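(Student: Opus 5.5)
The natural route is to express both hooks through the explicit formula of the Corollary and then invoke the complementarity of Remark~\ref{rmk:propbarU}. For $1\le i\le z$, Eq.~\eqref{equazionehookdq} applies to both cells (for $i=1$ use Lemma~\ref{lemLR}(\ref{R1}) directly). It gives
\[
h_{i,i}=\lambda_{n-2}-2s_{i-1},\qquad h_{i,z+1}=\lambda_{n-2}-s_{i-1}-s_z .
\]
We have $\lambda_{n-2}=2n-4$ and, by Lemma~\ref{sz}, $s_z=n-2$. Substituting yields $h_{i,z+1}=n-2-s_{i-1}$, and hence
\[
2h_{i,z+1}=2n-4-2s_{i-1}=h_{i,i}.
\]
So the identity is a direct algebraic consequence of $s_z=\lambda_t/2$.

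The only care needed is to check that the formulas are legitimately applied, which means both cells must actually exist in $Y_{S_\lambda}$.

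\begin{itemize}
\item For the diagonal cell $c_{i,i}$, existence follows from Lemma~\ref{lemma:main_diagonal}(\ref{lemma:main_diagonal3}), since the diagonal has exactly $z$ cells.
\item For $c_{i,z+1}$, existence follows from Lemma~\ref{lemma:main_diagonal}(\ref{lemma:main_diagonal2}), since column $C_{z+1}$ has exactly $z$ cells.
\end{itemize}

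We should also confirm that Lemma~\ref{hookinterno} is valid in the boundary cases $i=1$ or $j=1$. When $i=1$ the identity reads $h_{1,1}=2h_{1,z+1}$, that is $2n-4=2(n-2)$, which holds trivially by Lemma~\ref{sz}. So we may treat $i=1$ separately and use the Corollary only for $2\le i\le z$.

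I expect no real obstacle. The one subtle point is the indexing convention of the Corollary: the row index $i\le z$ pairs with $s_{i-1}$ and the column index $j$ pairs with $s_{j-1}$. With $j=z+1$ this gives $s_z=n-2$, exactly the element singled out in Lemma~\ref{sz}. I would double-check this against the figure, using the first row, where $h_{1,z+1}=n-2$. As a sanity check, I would also note the interpretation via Proposition~\ref{riconoscimentounref}(\ref{item-ric3b}): the extra column supplies precisely the halves of the diagonal hooks.
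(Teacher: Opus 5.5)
Your proposal is correct and is essentially the paper's own proof. The paper likewise writes $h_{i,i}=\lambda_{n-2}-2s_{i-1}$ and $h_{i,z+1}=\lambda_{n-2}-s_{i-1}-s_z$ via Eq.~\eqref{equazionehookdq}, and concludes from $s_z=n-2=\lambda_{n-2}/2$. Your extra checks (existence of the cells via Lemma~\ref{lemma:main_diagonal}, and treating $i=1$ separately) are harmless bookkeeping the paper leaves implicit. The case split is not even needed, since the identity of Lemma~\ref{hookinterno} holds trivially for cells in the first row or column as well.
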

\begin{proof}
We compute directly:
\[
\begin{aligned}
h_{i,i} &= \lambda_{n-2} - 2 s_{i-1} \\
        &= 2 (n-2 - s_{i-1}) \\
        &= 2 (\lambda_{n-2} - s_z - s_{i-1}) \\
        &= 2 h_{i,z+1}.
\end{aligned}
\]
\end{proof}

We have shown that  the Young diagrams corresponding to maximal unrefinable partitions of triangular numbers exhibit a staircase-like structure with a main diagonal and an extra column beyond the diagonal. 
Now we can count the number of cells of $Y_{S_{\l}}$.

\begin{proposition}
		Let  $\l\in \overline{\mathcal{U}}_{T_{n}}$. Then the number of cells of $Y_{S_{\l}}$ is
		$3n-3.$
		Therefore, the sum of the hook lengths on the main diagonal is equal to $3n-3$.
	\end{proposition}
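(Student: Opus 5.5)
The plan is to count the cells row by row, using the fact that under the $\KN$ transformation each row corresponds to a gap of $S_\lambda$. Then I will get the statement on the diagonal from the standard decomposition of a Young diagram into principal hooks.

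\emph{Step 1 (row lengths).} By Proposition~\ref{riconoscimentounref}(\ref{item-ric1}), the rows of $Y_{S_\lambda}$ are in bijection with the parts of $\lambda$. By the construction of the lattice path, the row whose first cell has hook $\lambda_j$ has as many cells as there are east steps before the corresponding north step. So its length is
\[
\#\{s\in S_\lambda \mid s<\lambda_j\}.
\]
Among the $\lambda_j$ integers $0,1,\dots,\lambda_j-1$, exactly $j-1$ are gaps, namely $\lambda_1,\dots,\lambda_{j-1}$. Hence this row has exactly $\lambda_j-(j-1)$ cells. This is the same computation as in Lemma~\ref{lemLR}(\ref{R1}) for the first row, where $\lambda_{n-2}=2n-4$ gives $2n-4-(n-3)=n-1$.

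\emph{Step 2 (summing).} We have $\lambda\vdash T_n$ with $t=n-2$ parts, since $\#\mathcal{M}_\lambda=\lfloor\lambda_t/2\rfloor=n-2$ and $\lambda_t=2n-4$ (equivalently, Lemma~\ref{lemLR}(\ref{C1})). Summing over the rows gives
\[
\sum_{j=1}^{n-2}\bigl(\lambda_j-(j-1)\bigr)=T_n-T_{n-3}=\frac{n(n+1)-(n-3)(n-2)}{2}=3n-3 .
\]
This step is routine; the only point to check is that the part count $t=n-2$ is the one fixed by membership in $\overline{\mathcal{U}}_{T_n}$.

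\emph{Step 3 (diagonal hooks).} For each diagonal cell $c_{i,i}$, its hook consists of $c_{i,i}$, its arm in row $i$, and its leg in column $i$. Any cell $c_{a,b}$ lies in exactly one such principal hook, namely the one with $i=\min(a,b)$. Hence the principal hooks partition the diagram, and the number of cells equals $\sum_{i=1}^{z}h_{i,i}$, where the diagonal has exactly $z$ cells by Lemma~\ref{lemma:main_diagonal}(\ref{lemma:main_diagonal3}). Therefore this sum equals $3n-3$.

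I expect no real obstacle. The only delicate point is the bookkeeping in Step 1, where the row length must be expressed via gaps strictly smaller than $\lambda_j$ and not via missing parts only; the element $0\in S_\lambda$ is what makes the count come out to $\lambda_j-(j-1)$.
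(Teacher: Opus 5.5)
Your proposal is correct and follows the paper's argument: both count cells row by row, with each row length equal to its first-column hook minus its leg (your $\lambda_j-(j-1)$ is exactly the paper's $h_{i,1}-(\#C_1-i)$ with $j=n-1-i$), and the sum gives $T_n-T_{n-3}=3n-3$. Your Step~3 adds something the paper leaves implicit: the decomposition into principal hooks justifies the ``therefore'' about the diagonal sum, and it holds for any Young diagram, so the appeal to Lemma~\ref{lemma:main_diagonal} is unnecessary.
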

	\begin{proof}
		The total number of cells in the diagram is equal to the sum of the number of cells in each row:
		\[
		\begin{aligned}
			\#Y_{S_{\l}}&=\sum_{i=1}^{n-2}\#R_i\\
			&=\sum_{i=1}^{n-2} h_{i,1}-(\#C_1-i)\\
			&=\sum_{i=1}^{n-2} h_{i,1}-\sum_{i=1}^{n-2}\#C_1+\sum_{i=1}^{n-2}i\\
			&=T_n-(n-2)(n-2)+T_{n-2}\\
			&=\frac{n(n+1)}{2}-(n-2)^2+\frac{(n-2)(n-1)}{2}\\
			&=\frac{1}{2}(6n-6)\\
			&=3n-3.
		\end{aligned}
		\]
	\end{proof}

The following proposition highlights a key property of Young diagrams corresponding to maximal unrefinable partitions in the triangular case $T_n$, with $n=2k-1$. It shows how the hook lengths along the extra column beyond the main diagonal naturally encode a sum that equals $k$, providing a direct combinatorial link between these partitions and the partitions of $k$ into distinct parts. This observation will serve as a cornerstone for establishing the desired bijection.

\begin{proposition}\label{etainDistinct}
Let $\lambda \in \overline{\mathcal{U}}_{T_n}$, where $n=2k-1$. Then 
\[
\sum_{i=2}^{z} h_{i,z+1} = k.
\]
\end{proposition}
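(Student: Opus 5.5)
The plan is to combine three facts already established: the total cell count $3n-3$ from the preceding proposition, the doubling relation $h_{i,i}=2h_{i,z+1}$ of Lemma~\ref{d2z+1}, and the value $h_{1,z+1}=n-2$ from Lemma~\ref{sz}. The strategy is to write the number of cells as the sum of the diagonal hooks, turn that into a sum over the extra column, and then peel off the first term.

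First I will justify the identity $\#Y_{S_\lambda}=\sum_{i=1}^{z} h_{i,i}$, which the previous proposition asserts in its last sentence. I would make it explicit as follows.
\begin{enumerate}
\item By Lemma~\ref{lemma:main_diagonal}, the cell $c_{z+1,z+1}$ does not exist. Since Young diagrams are order ideals, every cell $c_{i,j}$ therefore satisfies $\min(i,j)\le z$.
\item Each such cell lies in exactly one diagonal hook, namely the hook of $c_{m,m}$ with $m=\min(i,j)$: it is in the arm if $i=m$ and in the leg otherwise.
\item Conversely, the hook of $c_{m,m}$ consists precisely of the cells with $\min(i,j)=m$.
\end{enumerate}
Hence the diagonal hooks partition the diagram, and together with the previous proposition this gives
\[
\sum_{i=1}^{z} h_{i,i}=3n-3 .
\]

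Next, Lemma~\ref{d2z+1} applies for every $1\le i\le z$, so
\[
2\sum_{i=1}^{z} h_{i,z+1}=3n-3, \qquad\text{that is,}\qquad \sum_{i=1}^{z} h_{i,z+1}=\frac{3n-3}{2}.
\]
This is an integer because $n=2k-1$ is odd. Finally, Lemma~\ref{sz} gives $h_{1,z+1}=n-2$; equivalently, $h_{1,z+1}=\lambda_{n-2}-s_z=(2n-4)-(n-2)$. Subtracting this term yields
\[
\sum_{i=2}^{z} h_{i,z+1}=\frac{3n-3}{2}-(n-2)=\frac{n+1}{2}=k .
\]

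The only step needing genuine care is the decomposition of the diagram into diagonal hooks. In particular, one must check that the extra column $C_{z+1}$ adds no cell outside the first $z$ rows. This is exactly item~(\ref{lemma:main_diagonal2}) of Lemma~\ref{lemma:main_diagonal}, which says $C_{z+1}$ has exactly $z$ cells. Once that is in place, the remaining steps are direct substitutions.
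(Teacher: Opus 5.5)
Your proof is correct and follows the paper's argument: both use the cell count $3n-3$, read as the sum of the diagonal hooks, together with the doubling relation $h_{i,i}=2h_{i,z+1}$. The paper subtracts $h_{1,1}=2n-4$ before halving, while you halve first and then subtract $h_{1,z+1}=n-2$, which is the same computation; your explicit check that the diagonal hooks partition the diagram fills in a step the paper only asserts.
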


\begin{proof}
We first compute the sum of the hooks along the main diagonal starting from the second row:
\[
\sum_{i=2}^{z} h_{i,i} = 3n - 3 - (2n - 4) = n + 1 = 2k.
\]
By Lemma~\ref{d2z+1}, each diagonal hook is twice the corresponding hook in the extra column:
\[
h_{i,i} = 2 h_{i,z+1}.
\]
Hence, dividing both sides by $2$, we obtain
$
\sum_{i=2}^{z} h_{i,z+1} = k,
$
as claimed.
\end{proof}

\begin{remark}
Proposition~\ref{etainDistinct} can be interpreted as providing an embedding of the set $\overline{\mathcal{U}}_{T_{2k-1}}$ into the set of partitions $\mathbb{D}_{k}$. 
Observe that the same property that allows a correspondence with partitions of $k$ into distinct parts can also be interpreted in terms of partitions of $2k$ into even (distinct) parts. 
In this perspective, one can read the contribution of each part directly along the main diagonal of the Young diagram, without referring to the extra column. The following example illustrates these features concretely.
\end{remark}

\begin{example}\label{exa:main}
Consider the partition 
\[
\lambda=(1,2,3,4,5,6,7,8,11,14,16,17,26)\in \overline{\mathcal{U}}_{T_{15}}, 
\]
with $k$  equal to $8=\frac{15+1}{2}$. We display in Fig.~\ref{fig:ex-final} its Young diagram and the corresponding symmetries. Looking at the main diagonal, or at the extra column, one find the corresponding partition into distinct parts $(1,3,4) \in \mathbb D_8$.

\begin{figure}
\[
\begin{tikzpicture}
			\draw (0,0)--(0.5,0)--(0.5,4)--(1.5,4)--(1.5,4.5)--(2.5,4.5)--(2.5,5)--(3,5)--(3,6)--(7,6)--(7,6.5)--(0,6.5)--(0,0);
			
			\draw (0,0.5)--(0.5,0.5);
			\draw (0,1)--(0.5,1);
			\draw (0,1.5)--(0.5,1.5);
			\draw (0,2)--(0.5,2);
			\draw (0,2.5)--(0.5,2.5);
			\draw (0,3)--(0.5,3);
			\draw (0,3.5)--(0.5,3.5);
			\draw (0,4)--(0.5,4);
			\draw (0,4.5)--(1.5,4.5);
			\draw (0,5)--(2.5,5);
			\draw (0,5.5)--(3,5.5);
			\draw (0,6)--(3.5,6);
			
			\draw (0.5,6.5)--(0.5,4);
			\draw (1,6.5)--(1,4);
			\draw (1.5,6.5)--(1.5,4);
			\draw (2,6.5)--(2,4.5);
			\draw (2.5,6.5)--(2.5,4.5);
			\draw (3,6.5)--(3,6);
			\draw (3.5,6.5)--(3.5,6);
			\draw (4,6.5)--(4,6);
			\draw (4.5,6.5)--(4.5,6);
			\draw (5,6.5)--(5,6);
			\draw (5.5,6.5)--(5.5,6);
			\draw (6,6.5)--(6,6);
			\draw (6.5,6.5)--(6.5,6);

			\draw[color=black, fill=yellow!30] (0,0) rectangle (0.5,0.5);
			\draw[color=black, fill=yellow!30] (0,0.5) rectangle (0.5,1);
			\draw[color=black, fill=yellow!30] (0,1) rectangle (0.5,1.5);
			\draw[color=black, fill=yellow!30] (0,1.5) rectangle (0.5,2);
			\draw[color=black, fill=yellow!30] (0,2) rectangle (0.5,2.5);
			\draw[color=black, fill=yellow!30] (0,2.5) rectangle (0.5,3);
			\draw[color=black, fill=yellow!30] (0,3) rectangle (0.5,3.5);
			\draw[color=black, fill=yellow!30] (0,3.5) rectangle (0.5,4);
			\draw[color=black, fill=yellow!30] (0,4) rectangle (0.5,4.5);
			\draw[color=black, fill=yellow!30] (0.5,4) rectangle (1,4.5);
			\draw[color=black, fill=yellow!30] (1,4) rectangle (1.5,4.5);
			
			\draw[color=black, fill=yellow!30] (7,6.5) rectangle (6.5,6);
			\draw[color=black, fill=yellow!30] (6.5,6.5) rectangle (6,6);
			\draw[color=black, fill=yellow!30] (6,6.5) rectangle (5.5,6);
			\draw[color=black, fill=yellow!30] (5.5,6.5) rectangle (5,6);
			\draw[color=black, fill=yellow!30] (5,6.5) rectangle (4.5,6);
			\draw[color=black, fill=yellow!30] (4.5,6.5) rectangle (4,6);
			\draw[color=black, fill=yellow!30] (4,6.5) rectangle (3.5,6);
			\draw[color=black, fill=yellow!30] (3.5,6.5) rectangle (3,6);
			\draw[color=black, fill=yellow!30] (3,6.5) rectangle (2.5,6);
			\draw[color=black, fill=yellow!30] (3,6) rectangle (2.5,5.5);
			\draw[color=black, fill=yellow!30] (3,5.5) rectangle (2.5,5);
			
			\draw[color=black, fill=red!30] (0,6.5) rectangle (0.5,6) rectangle (1,5.5) rectangle (1.5,5) rectangle (2,4.5);
			
			\draw[color=black, fill=ForestGreen!30] (0,6) rectangle (0.5,5.5) rectangle (0,5) rectangle (0.5,4.5) rectangle (1,5) rectangle (1.5,4.5);
			\draw[color=black, fill=ForestGreen!30] (0.5,5.5) rectangle (1,5);
			\draw[color=black, fill=ForestGreen!30] (0.5,6.5) rectangle (1,6) rectangle (1.5,6.5) rectangle (2,6) rectangle (1.5,5.5) rectangle (2,5);
			\draw[color=black, fill=ForestGreen!30] (1,6) rectangle (1.5,5.5);
			
			\draw[color=black, fill=pink!30] (2,6.5) rectangle (2.5,6);
		   \draw[color=purple, fill=pink!30, line width=1.2pt] (2.5,6) rectangle (2,5.5) rectangle (2.5,5) rectangle (2,4.5);

			\node at (0.25,0.25) {$1$};
			\node at (0.25,0.75) {$2$};
			\node at (0.25,1.25) {$3$};
			\node at (0.25,1.75) {$4$};
			\node at (0.25,2.25) {$5$};
			\node at (0.25,2.75) {$6$};
			\node at (0.25,3.25) {$7$};
			\node at (0.25,3.75) {$8$};
			\node at (0.25,4.25) {$11$};
			\node at (0.25,4.75) {$14$};
			\node at (0.25,5.25) {$16$};
			\node at (0.25,5.75) {$17$};
			\node at (0.25,6.25) {$26$};
			
			\node at (0.75,4.25) {$2$};
			\node at (0.75,4.75) {$5$};
			\node at (0.75,5.25) {$7$};
			\node at (0.75,5.75) {$8$};
			\node at (0.75,6.25) {$17$};
			
			\node at (1.25,4.25) {$1$};
			\node at (1.25,4.75) {$4$};
			\node at (1.25,5.25) {$6$};
			\node at (1.25,5.75) {$7$};
			\node at (1.25,6.25) {$16$};
			
			\node at (1.75,4.75) {$2$};
			\node at (1.75,5.25) {$4$};
			\node at (1.75,5.75) {$5$};
			\node at (1.75,6.25) {$14$};
			
			\node at (2.25,4.75) {$1$};
			\node at (2.25,5.25) {$3$};
			\node at (2.25,5.75) {$4$};
			\node at (2.25,6.25) {$13$};
			
			\node at (2.75,5.25) {$1$};
			\node at (2.75,5.75) {$2$};
			\node at (2.75,6.25) {$11$};
			
			\node at (3.25,6.25) {$8$};
			\node at (3.75,6.25) {$7$};
			\node at (4.25,6.25) {$6$};
			\node at (4.75,6.25) {$5$};
			\node at (5.25,6.25) {$4$};
			\node at (5.75,6.25) {$3$};
			\node at (6.25,6.25) {$2$};
			\node at (6.75,6.25) {$1$};

		\end{tikzpicture}
	\]
	
	\caption{Young diagram of a maximal unrefinable partition illustrating the main diagonal, the extra column and its symmetries.}
	\label{fig:ex-final}
	\end{figure}
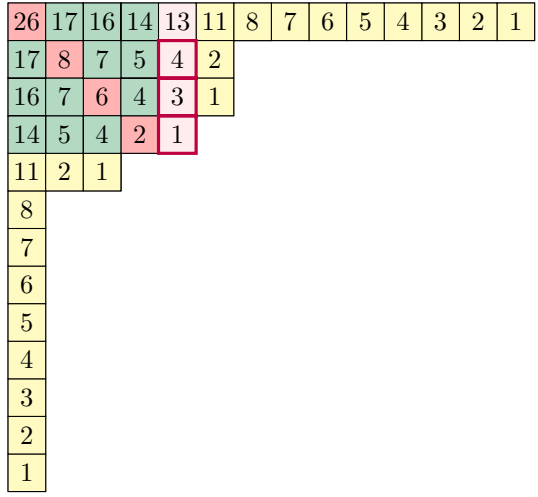
\end{example}

In the following section, we will complete the proof of the bijection, i.e.\ we show how each element of $\overline{\mathcal{U}}_{T_n}$ can be uniquely associated with a distinct-part partition of $k$, except for the partition $(3,k-3)$. Since $\#\mup{T_n} = \#\overline{\mathcal{U}}_{T_n} + 1$, this discrepancy accounts precisely for the missing case, giving us the desired correspondence.

\subsection{Proof of \(\mathbb{D}_k \hookrightarrow \mup{T_{n}} \)}\label{sec:ritornotriang}
Let $\eta=(\eta_1,\cdots,\eta_l)\in\mathbb{D}_k\setminus\{(3,k-3)\}$, and let us define $\eta^*=(\eta_1,\cdots,\eta_l,n-2)$ and $2\eta^*=(2\eta_1,\cdots,2\eta_l,2n-4)$, where $n=2k-1$.

We construct the Young diagram $Y_{2\eta^*}$ by imposing the following conditions:
\begin{align}
h_{1,1} &= 2n - 4, \nonumber \\
h_{i,i} &= 2\eta_{\,l-(i-2)}, \qquad 2 \le i \le l+1, \nonumber \\
a(c_{i,i}) &= l(c_{i,i}) + 1, \qquad 1 \le i \le l+1. \label{equazionecostruzione'}
\end{align}

From the Young diagram $Y_{2\eta^*}$ constructed above, in the following we will apply the inverse of the KN transformation, obtaining a numerical set 
$
S_{2\eta^*} = \KN^{-1}(Y_{2\eta^*})
$.
We will define the associated partition ${\lambda} = S_{2\eta^*}^c$, which we will show to be unrefinable.

\begin{example}
		We consider the partition $\eta=(1,3,4)\in\mathbb{D}_8$, hence $2\eta^*=(2,6,8,26)$. The resulting Young diagram $Y_{2\eta^*}$ obtained with the rules as above  is displayed in Fig.~\ref{fig:Y2eta-example}
		\begin{figure}
		\[
		\begin{tikzpicture}
			\draw[color=black, fill=ForestGreen!30] (0,0) rectangle (0.5,-0.5) rectangle (0,-1) rectangle (0.5,-1.5) rectangle (0,-2) rectangle (0.5,-2.5) rectangle (0,-3) rectangle (0.5,-3.5) rectangle (0,-4) rectangle (0.5,-4.5) rectangle (0,-5) rectangle (0.5,-5.5) rectangle (0,-6) rectangle (0.5,-6.5);
			\draw[color=black, fill=ForestGreen!30] (0.5,0) rectangle (1,-0.5) rectangle (1.5,0) rectangle (2,-0.5) rectangle (2.5,0) rectangle (3,-0.5) rectangle (3.5,0) rectangle (4,-0.5) rectangle (4.5,0) rectangle (5,-0.5) rectangle (5.5,0) rectangle (6,-0.5) rectangle (6.5,0) rectangle (7,-0.5);
			\node at (0.25,-0.25) {$26$};
			\draw[color=black, fill=yellow!30] (0.5,-2.5) rectangle (1,-2) rectangle (0.5,-1.5) rectangle (1,-1) rectangle (0.5,-0.5);
			\draw[color=black, fill=yellow!30] (1,-0.5) rectangle (1.5,-1) rectangle (2,-0.5) rectangle (2.5,-1) rectangle (3,-0.5);
			\node at (0.75,-0.75) {$8$};
			\draw[color=black, fill=pink!30] (1,-2.5) rectangle (1.5,-2) rectangle (1,-1.5) rectangle (1.5,-1) rectangle (2,-1.5) rectangle (2.5,-1) rectangle (3,-1.5);
			\node at (1.25,-1.25) {$6$};
			\draw[color=black, fill=red!30] (1.5,-1.5) rectangle (2,-2) rectangle (2.5,-1.5);
			\node at (1.75,-1.75) {$2$};
		%	\node at (0.25,0.25) {\scriptsize{$1$}};
		%	\node at (-0.25,-0.25) {\scriptsize{$1$}};
		%	\node at (1.75,0.25) {\scriptsize{$4$}};
		%	\node at (-0.25,-1.75) {\scriptsize{$4$}};
		\end{tikzpicture}
		\]
		\caption{Young diagram $Y_{2\eta^*}$ for $\eta=(1,3,4)$.}
		\label{fig:Y2eta-example}
		\end{figure}
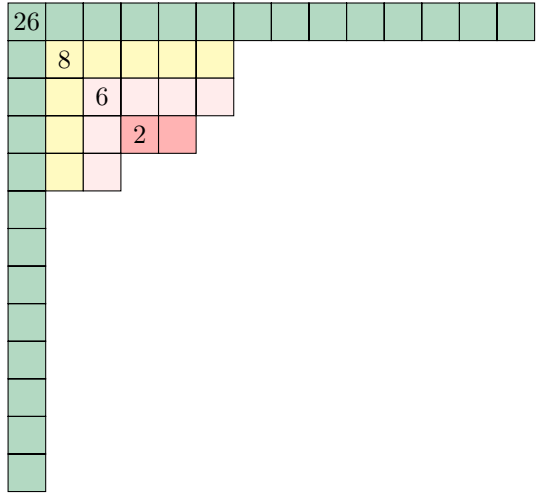
	\end{example}

As in the previous section, we will now derive several structural properties of the Young diagram $Y_{2\eta^*}$, beginning with results concerning the square determined by the main diagonal.

\begin{lemma}\label{rowsColsY}
Let $Y_{2\eta^*}$ be the Young diagram constructed above. Then:
\begin{enumerate}
    \item \label{C_i'} $\#C_1 = n-2$, {and for} $2\leq i\leq l+1$ we have $\#C_i=\eta_{l-(i-2)}+(i-1)$;
   
    \item \label{R_i'} $\#R_1 = n-1$, {and for} $2\leq i\leq l+1$ we have $\#C_i=\eta_{l-(i-2)}+i$.
    \end{enumerate}
\end{lemma}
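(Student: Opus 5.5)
The plan is to read the row and column lengths directly off the constraints \eqref{equazionecostruzione'} that define $Y_{2\eta^*}$. The key observation is that a diagonal cell $c_{i,i}$ satisfies
\[
\#R_i = i + a(c_{i,i}), \qquad \#C_i = i + l(c_{i,i}), \qquad h_{i,i} = a(c_{i,i}) + l(c_{i,i}) + 1 .
\]
Imposing $a(c_{i,i}) = l(c_{i,i}) + 1$ therefore gives $h_{i,i} = 2\,l(c_{i,i}) + 2$. Hence
\[
l(c_{i,i}) = \tfrac{h_{i,i}}{2} - 1, \qquad a(c_{i,i}) = \tfrac{h_{i,i}}{2}.
\]
The whole statement then follows by substituting the prescribed diagonal hooks.

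First, for $i = 1$, the hook is $h_{1,1} = 2n-4$. This gives $l(c_{1,1}) = n-3$ and $a(c_{1,1}) = n-2$, so $\#C_1 = 1 + (n-3) = n-2$ and $\#R_1 = 1 + (n-2) = n-1$. This matches the first-row and first-column lengths of Lemma~\ref{lemLR}, which is a useful consistency check with the forward direction.

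Second, for $2 \le i \le l+1$, the hook is $h_{i,i} = 2\eta_{l-(i-2)}$. This yields $l(c_{i,i}) = \eta_{l-(i-2)} - 1$ and $a(c_{i,i}) = \eta_{l-(i-2)}$. Consequently $\#C_i = i + \eta_{l-(i-2)} - 1 = \eta_{l-(i-2)} + (i-1)$ and $\#R_i = \eta_{l-(i-2)} + i$. The second item of the statement is evidently meant to read $\#R_i$ rather than $\#C_i$, and I would prove it in that corrected form. This is the same relation $\#R_i = \#C_i + 1$ seen in Lemma~\ref{lem:simquad}.

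The main obstacle is well-definedness: we must check that these prescribed diagonal data actually come from a genuine Young diagram whose main diagonal has exactly $l+1$ cells. For this, the row lengths $\#R_1 > \#R_2 > \dots > \#R_{l+1}$ must be weakly decreasing, which they are strictly. The column lengths must also decrease. Both follow because $\eta$ has distinct parts, so $\eta_{l-(i-2)} \ge \eta_{l-(i-1)} + 1$. The first-row and first-column values dominate because $n - 2 = 2k-3$ exceeds $\eta_l + 1$ once $k \ge 4$ and $\eta \ne (k)$. The case $\eta = (k)$ needs a separate small check, namely that $\#C_2 = k+1 \le n-2$.

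Moreover, $\#C_{l+1} = \eta_1 + l \ge l+1$ and $\#R_{l+1} \ge l+2$, so every diagonal cell lies in the diagram. This is the Frobenius-coordinate description: the arms $(n-2, \eta_l, \dots, \eta_1)$ and legs $(n-3, \eta_l - 1, \dots, \eta_1 - 1)$ are strictly decreasing sequences of nonnegative integers. Such sequences determine a unique Young diagram with $l+1$ diagonal cells. I would invoke this standard fact to conclude that the diagram is well defined and that the computed lengths are its actual rows and columns.
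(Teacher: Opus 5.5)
Your proposal is correct and is essentially the paper's proof: both substitute the prescribed diagonal hooks into $a(c_{i,i}) = l(c_{i,i})+1$ and read off $\#C_i = i + l(c_{i,i})$ and $\#R_i = i + a(c_{i,i})$. You are also right that item 2 should read $\#R_i$; the paper's own proof computes $\#R_i$.

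Your extra well-definedness paragraph goes beyond the paper, which does not address it. The Frobenius-coordinate argument there is sound, but one claim is false: the row lengths $\#R_2,\dots,\#R_{l+1}$ need not be strictly decreasing. For $\eta=(1,3,4)$ one gets $\#R_2=\#R_3=6$, as in Fig.~\ref{fig:ex-final}. Only weak monotonicity holds, and that is all the argument needs.
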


\begin{proof}
We prove each statement in turn.
\begin{enumerate}
\item From Eq.~\eqref{equazionecostruzione'} we have
\[
h_{1,1}=1+a(c_{1,1})+l(c_{1,1})=2+2l(c_{1,1}).
\]
We know that $h_{1,1}=2n-4$, then we obtain
\[
\#C_1=l(c_{1,1})+1=n-2.
\]
For $2\leq i\leq l+1$ we have $h_{i,i}=2\eta_{l-(i-2)}$, then 
\[
\#C_i=i+l(c_{i,i})=i+\frac{2\eta_{l-(i-2)}-2}{2}=\eta_{l-(i-2)}+(i-1).
\]
\item We have
\[
\#R_1=1+a(c_{1,1})=1+\frac{2n-4}{2}=n-1.
\]
For $2\leq i\leq l+1$ we have $h_{i,i}=2\eta_{l-(i-2)}$, we obtain
\[
\#R_i=i+a(c_{i,i})=i+\frac{h_{i,i}}{2}=\eta_{l-(i-2)}+i.\qedhere
\]
\end{enumerate} 
 \end{proof}

\begin{lemma}\label{hookSymmetrySquare}
Let us consider the square in $Y_{2\eta^*}$ defined by the first $l+1$ rows and columns along the main diagonal. Then:
\begin{enumerate}
    \item \label{C_1R_1q} for $2 \le i \le l+1$, the hooks in the first row and column of this square satisfy
    \[
    h_{1,i} = h_{i,1};
    \]
    \item for $2 \le i,j \le l+1$, the hooks within the square are symmetric across the main diagonal, i.e.\
    \[
    h_{i,j} = h_{j,i}.
    \]
\end{enumerate}
\end{lemma}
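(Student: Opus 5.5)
Both items follow from the fact that a hook length depends only on the row and column lengths, via the formula already used in the proof of Lemma~\ref{hooksymmetryglobal}:
\[
h_{i,j}=\#R_i-(j-1)+\#C_j-(i-1)-1.
\]
The plan is to show that inside the $(l+1)\times(l+1)$ square the row and column lengths satisfy the shifted relation $\#R_i=\#C_i+1$, the analogue of Lemma~\ref{lem:simquad}, and then compare hooks directly.

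The first step is to read this relation off Lemma~\ref{rowsColsY}. For $i=1$ we have $\#R_1=n-1=\#C_1+1$. For $2\le i\le l+1$ we have $\#C_i=\eta_{l-(i-2)}+(i-1)$ and $\#R_i=\eta_{l-(i-2)}+i$; the second item of that lemma has a misprint $\#C_i$ for $\#R_i$, but its proof computes $\#R_i$. All of this is just a restatement of the construction rule~\eqref{equazionecostruzione'}, $a(c_{i,i})=l(c_{i,i})+1$, for each diagonal cell. A side point to check is that $Y_{2\eta^*}$ really is a Young diagram, i.e.\ that row and column lengths are weakly decreasing. This follows because $\eta_l>\eta_{l-1}>\cdots$ are strictly decreasing while the index shift is only $+1$, and because $n-2\ge \eta_l+1$ since $\eta_l\le k<n-2$ for $k\ge4$. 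So every cell $c_{i,j}$ with $i,j\le l+1$ lies in the diagram, and the hook formula applies.

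For item~(\ref{C_1R_1q}), I would compute for $2\le i\le l+1$:
\[
h_{1,i}=\#R_1-(i-1)+\#C_i-1=(n-1)-(i-1)+\#C_i-1,
\]
\[
h_{i,1}=\#R_i+\#C_1-(i-1)-1=(\#C_i+1)+(n-2)-(i-1)-1 .
\]
These two expressions coincide. For the second item, I would substitute $\#R_i=\#C_i+1$ and $\#R_j=\#C_j+1$ into $h_{i,j}$ and $h_{j,i}$. Both become $\#C_i+\#C_j-i-j+2$, which is symmetric in $i$ and $j$.

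The only delicate point is the bookkeeping: keeping straight the roles of $\#R_i$ and $\#C_i$ given the misprint in Lemma~\ref{rowsColsY}, and confirming that the cells involved actually exist. There is no conceptual obstacle; the proof is essentially the computation in Lemma~\ref{hooksymmetryglobal} run in reverse.
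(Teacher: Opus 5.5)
Your proposal is correct and takes essentially the same approach as the paper: both compute $h_{1,i}$, $h_{i,1}$ and $h_{i,j}$ directly from the row and column lengths of Lemma~\ref{rowsColsY}, which in the square satisfy $\#R_i=\#C_i+1$. You are also right that the second item of that lemma should read $\#R_i=\eta_{l-(i-2)}+i$, and your check that all cells of the square exist is a harmless addition the paper leaves implicit.
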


\begin{proof}
We prove the two points separately.
\begin{enumerate}
    \item Using Lemma~\ref{rowsColsY}, we have
    \[
    \begin{aligned}
    h_{i,1} &= \#R_i + l(c_{i,1}) \\
    &= \eta_{l-(i-2)} + i + (n-2-i) \\
    &= \eta_{l-(i-2)} + n-2 \\
    &= \eta_{l-(i-2)} + (i-1) + (n-1-i) \\
    &= \#C_i + a(c_{1,i}) \\
    &= h_{1,i}.
    \end{aligned}
    \]
    \item By Lemma~\ref{rowsColsY} and using the previous point, we obtain
    \[
    \begin{aligned}
    h_{i,j} &= 1 + a(c_{i,j}) + l(c_{i,j}) \\
    &= 1 + \#R_i - j + \#C_j - i \\
    &= 1 + \eta_{l-(i-2)} + i - j + \eta_{l-(j-2)} + (j-1) - i \\
    &= 1 + \#C_i - j + \#R_j - i \\
    &= 1 + l(c_{j,i}) + a(c_{j,i}) \\
    &= h_{j,i}.
    \end{aligned}
    \]
\end{enumerate}
\end{proof}

Let ${\lambda} = S_{2\eta^*}^c$, where $S_{2\eta^*} = \KN^{-1}(Y_{2\eta^*})$. By Proposition~\ref{riconoscimentoNS}, we have for all $i$:
	\begin{itemize}
		\item $h_{i,1} \in \l$;
		\item $h_{1,i} = h_{1,1} - s_{i-1}$, where $s_{i-1} \in S_{2\eta^*}$.
	\end{itemize}
	As a direct consequence of the previous structural lemmas on $Y_{2\eta^*}$, we can now deduce basic properties of the associated partition $\l$, summarized in the following corollary.

\begin{corollary}\label{cor:struct_lambda}
Let $\eta \in \mathbb{D}_k \setminus \{(3,k-3)\}$, and let $\l$ denote the partition obtained from $\eta$ as described above. Then the following hold:
\begin{enumerate}
    \item\label{lunghezzalambda} $| \l | = n-2$, where $n=2k-1$;
    \item\label{massimapartelambda} the largest part of $\l$ satisfies $\l_{n-2} = 2n-4$;
    \item\label{partimancantilambda} the number of missing parts of $\l$ is $\#\mathcal{M}_{\l} = n-2$.
\end{enumerate}
\end{corollary}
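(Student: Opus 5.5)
The three items follow by reading the first row and first column of $Y_{2\eta^*}$ through the Keith--Nath dictionary.

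By Proposition~\ref{riconoscimentoNS}(\ref{riconoscimentoNS1}), the hooks of the first column of a KN diagram are exactly the gaps of the numerical set. So $|\lambda|=\#S_{2\eta^*}^c=\#C_1$.

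Similarly, the columns of the diagram correspond to the east steps of the KN path before the Frobenius number. In other words, the number of columns equals $\#\{s\in S_{2\eta^*} : s<F(S_{2\eta^*})\}$, and this set is $\{0\}\cup\mathcal{M}_\lambda$. So $\#R_1=1+\#\mathcal{M}_\lambda$.

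For item~(\ref{lunghezzalambda}), Lemma~\ref{rowsColsY}(\ref{C_i'}) gives $\#C_1=n-2$, hence $|\lambda|=n-2$.

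For item~(\ref{massimapartelambda}), the largest gap is the hook of the top-left cell, i.e.\ $F(S_{2\eta^*})=h_{1,1}$. The defining conditions of the construction set $h_{1,1}=2n-4$, so $\lambda_{n-2}=2n-4$.

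For item~(\ref{partimancantilambda}), Lemma~\ref{rowsColsY}(\ref{R_i'}) gives $\#R_1=n-1$. The count of east steps above then yields $\#\mathcal{M}_\lambda=n-2$.

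As a check, the elements of $\{1,\dots,2n-4\}$ split into $n-2$ parts and $n-2$ missing parts, and indeed $(n-2)+(n-2)=2n-4$.

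The main obstacle is verifying that the conditions defining $Y_{2\eta^*}$ actually produce a valid Young diagram. Concretely, the diagonal data must give weakly decreasing row and column lengths. Rows $i\ge l+2$ must also be filled consistently with $\#C_1=n-2$, and beyond the $(l+1)\times(l+1)$ square the shape is forced by the row and column lengths already fixed. Monotonicity should follow from $\eta$ having distinct parts, since $\#R_i=\eta_{l-(i-2)}+i$ decreases strictly in $i$ because $\eta_{l-(i-2)}$ drops by at least $1$ at each step.

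One must also have $\#R_2\le\#R_1$, i.e.\ $\eta_l+2\le n-1=2k-2$. This holds since $\eta_l\le k$ and $k\ge4$. Beyond that, the three items rely only on the first row and column, which Lemma~\ref{rowsColsY} already controls. I would therefore state the well-definedness briefly and then read off the three counts as above.
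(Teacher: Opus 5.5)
Your proposal is correct and follows the paper's proof essentially verbatim: $|\lambda|=\#C_1=n-2$ and $\lambda_{n-2}=h_{1,1}=2n-4$ come from Proposition~\ref{riconoscimentoNS}, and $\#\mathcal{M}_\lambda=\#R_1-1=n-2$ comes from Lemma~\ref{rowsColsY}. Your extra well-definedness remark, which the paper omits, is fine except for one slip: $\#R_i=\eta_{l-(i-2)}+i$ is only weakly decreasing in $i$, with equality when consecutive parts of $\eta$ differ by $1$, and weak decrease is all a Young diagram needs.
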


\begin{proof}
We address each claim separately.
\begin{enumerate}
    \item From Lemma~\ref{rowsColsY}(\ref{C_i'}) we know that $\#C_1=n-2$, and by Proposition~\ref{riconoscimentoNS} we deduce that $\mid\l\mid=n-2$.
    \item From the previous point we have
    $
    \l_{n-2} = h_{1,1} = 2n-4.
    $
    \item From Lemma~\ref{rowsColsY}(\ref{R_i'}) we know that
    \[
    \#\{s\in S_{2\eta^*} \mid 0\le s \le 2n-4\} = \#R_1 = n-1.
    \]
    Clearly, $\#\mathcal{M}_{\l} = \#\{s\in S_{2\eta^*} \mid 0 < s \le 2n-4\} = n-2$.\qedhere
\end{enumerate}
\end{proof}

We now investigate structural properties of the Young diagram $Y_{2\eta^*}$ that will lead us to deduce the unrefinability of the associated partition $\l = S_{2\eta^*}^c$. In particular, we focus first on the configuration of hooks along the main diagonal and on the interplay between rows and columns of $Y_{2\eta^*}$.

\begin{lemma}\label{hooks_diagonal_and_sums}
Let $\eta \in \mathbb{D}_k \setminus \{(3,k-3)\}$ and denote by $\l$ the partition associated with $Y_{2\eta^*}$ as above. Then the following properties hold:

\begin{enumerate}
    \item \label{hookdiagonale'} for $2 \le i \le l+1$, the hook lengths along the main diagonal satisfy
    \begin{equation}
        h_{i,i} = \l_{n-2} - 2 s_{i-1}; 
    \end{equation}
 \item \label{hooks_diagonal_and_sums2} if $s_i, s_j \neq n-2$, then $s_i + s_j \neq \l_{n-2}$. Consequently, for each $\l_i \in \l$, there exists a unique $s_j \in S_{2\eta^*}$ such that
    \begin{equation}
        \l_i + s_j = \l_{n-2}.
    \end{equation}
    In other words, every part of $\l$ pairs with an element of $S_{2\eta^*}$ to sum to the maximal part.
\end{enumerate}
\end{lemma}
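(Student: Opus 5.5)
The plan is to derive item~(\ref{hookdiagonale'}) from the general hook identity of Lemma~\ref{hookinterno}. Item~(\ref{hooks_diagonal_and_sums2}) will follow once we know that every hook of the first row of $Y_{2\eta^*}$, except a single one equal to $n-2$, already appears in the first column.

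For item~(\ref{hookdiagonale'}), fix $2\le i\le l+1$ and apply Eq.~\eqref{equazionehookinterno} with $j=i$:
\[
h_{i,i}=h_{1,i}+h_{i,1}-h_{1,1}.
\]
By Lemma~\ref{hookSymmetrySquare}(\ref{C_1R_1q}) we have $h_{i,1}=h_{1,i}$. By Proposition~\ref{riconoscimentoNS}(\ref{riconoscimentoNS2}) we have $h_{1,i}=h_{1,1}-s_{i-1}=\l_{n-2}-s_{i-1}$, using Corollary~\ref{cor:struct_lambda}(\ref{massimapartelambda}). Substituting gives $h_{i,i}=2(\l_{n-2}-s_{i-1})-\l_{n-2}=\l_{n-2}-2s_{i-1}$, so this item is routine.

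For item~(\ref{hooks_diagonal_and_sums2}), I first want the global shape of $Y_{2\eta^*}$. Let $d=l+1$. The conditions \eqref{equazionecostruzione'} say that the Frobenius coordinates $(a_1,\dots,a_d\mid b_1,\dots,b_d)$ of $Y_{2\eta^*}$ satisfy $a_i=b_i+1$. Hence $Y_{2\eta^*}$ is obtained from the self-conjugate diagram $\mu$ with Frobenius coordinates $(b\mid b)$ by inserting a column of height $d$ immediately after its Durfee square. Consequently:
\begin{itemize}
\item rows $R_i$ with $i>d$ coincide with the rows of $\mu$;
\item columns $C_j$ with $j\ge d+2$ coincide with column $j-1$ of $\mu$;
\item $\#C_{d+1}=d$.
\end{itemize}
Self-conjugacy of $\mu$ then gives $\#R_i=\#C_{i+1}$ for $d+1\le i\le n-2$, mirroring Lemma~\ref{lem:simquad}. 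I expect this step to be the main obstacle, since it needs a careful (but standard) argument that the diagonal data with $a=b+1$ really forces this ``self-conjugate plus one column'' shape. Once the shape is established, two computations follow.

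First, the hook in the extra column is
\[
h_{1,d+1}=\#C_{d+1}+(n-1)-(d+1)=n-2.
\]
Second, for $j\ge d+2$, a hook computation exactly as in the proof of Lemma~\ref{lem:simquad} yields $h_{1,j}=h_{j-1,1}$. Together with Lemma~\ref{hookSymmetrySquare}(\ref{C_1R_1q}) for $j\le d$ (and $h_{1,1}=\l_{n-2}$ trivially), every first-row hook other than $h_{1,d+1}=n-2$ lies in the first column, that is, in $\l$.

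To conclude, recall that the first-row hooks are exactly the numbers $\l_{n-2}-s$ with $s\in S_{2\eta^*}$ and $s<\l_{n-2}$. The value $n-2$ arises from $s_d=n-2$, so $n-2\in S_{2\eta^*}$. For any other $s_j\neq n-2$ with $s_j<\l_{n-2}$, the number $\l_{n-2}-s_j$ is a first-column hook, hence a part of $\l$ and not an element of $S_{2\eta^*}$. Therefore $s_i+s_j=\l_{n-2}$ with $s_i,s_j\neq n-2$ is impossible.

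The map $s_j\mapsto \l_{n-2}-s_j$ is then an injection from the $n-2$ elements of $S_{2\eta^*}\cap[0,\l_{n-2})$ different from $n-2$ into $\l$. By Corollary~\ref{cor:struct_lambda}, $\l$ has exactly $n-2$ parts, so this map is a bijection. This gives, for each $\l_i$, a unique $s_j\in S_{2\eta^*}$ with $\l_i+s_j=\l_{n-2}$.
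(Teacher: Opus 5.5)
Your proof is correct. Item~(1) is exactly the paper's computation, but for item~(2) you take a different, more global route.

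The paper argues locally by contradiction. From $s_i+s_j=\lambda_{n-2}$ with $s_j<n-2$ it writes $h_{j+1,j+1}=\lambda_{n-2}-2s_j>0$ and concludes that $c_{j+1,j+1}$ lies in the $(l+1)\times(l+1)$ square. It then applies Lemma~\ref{hookSymmetrySquare} to get $h_{j+1,1}=h_{1,j+1}=s_i$, which is a gap, a contradiction. The existence of a partner for each part is only sketched. That step tacitly presupposes that every element of $S_{2\eta^*}$ below $n-2$ is one of $s_1,\dots,s_l$, i.e.\ $s_{l+1}\ge n-2$. The paper only establishes this afterwards, in Corollary~\ref{n_minus_2_in_set} and Lemma~\ref{block2symmetry}.

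You instead first identify $Y_{2\eta^*}$ as the self-conjugate diagram $(b\mid b)$ with a column of height $l+1$ inserted after its Durfee square. This immediately gives $h_{1,l+2}=n-2$ and $h_{1,j}=h_{j-1,1}$ for $j\ge l+3$. That is precisely the content of Lemma~\ref{block2symmetry}(1)--(2) and of Corollary~\ref{n_minus_2_in_set}, obtained without circularity. Item~(2) then follows from your injection-plus-cardinality argument, which also proves existence rather than sketching it.

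The step you flagged as the main obstacle is in fact immediate. Frobenius coordinates determine a diagram uniquely. Inserting a column of height $l+1$ after column $l+1$ of $\mu$ gives a valid Young diagram, because the old column $l+2$ of $\mu$ has height at most $l+1$ (as $c_{l+2,l+2}\notin\mu$). Its diagonal cells have legs $b_i$ and arms $b_i+1$.

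Finally, your argument uses only $k\ge 4$, so that $n-2>\eta_l$ and the Frobenius coordinates are strictly decreasing. It never uses the exclusion of $(3,k-3)$. This is consistent: the lemma also holds for that partition (cf.\ Example~\ref{controesempio}), and the exclusion matters only for unrefinability.
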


\begin{proof}
We  prove each claim separately.
\begin{enumerate}
    \item By Eq.~\eqref{equazionehookinterno} and Lemma~\ref{hookSymmetrySquare}(\ref{C_1R_1q}), we have
    \begin{align*}
        h_{i,i} &= 2 h_{1,i} - h_{1,1} = \l_{n-2} - 2 s_{i-1}.
    \end{align*}

    \item Suppose, by contradiction, that there exist $s_i, s_j \neq n-2$ such that $\l_{n-2} = s_i + s_j$. Then
    \begin{align*}
        h_{1,i+1} &= \l_{n-2} - s_i = s_j, \\
        h_{1,j+1} &= \l_{n-2} - s_j = s_i.
    \end{align*}
    Without loss of generality, assume $s_i > n-2$ so $s_j < n-2$. Then 
    \[
        h_{j+1,j+1} = \l_{n-2} - 2 s_j > 0,
    \]
    implying $1 \le j \le l$. This would force $h_{j,1} = h_{1,j} \in S_{2\eta^*}$, a contradiction. Hence $s_i + s_j \neq \l_{n-2}$.
    
    Now, let $\l_i \in \l$ be given. From what previously proved, for each $\l_i$ there exists exactly one $s_j \in S_{2\eta^*}$ satisfying $\l_i + s_j = \l_{n-2}$. Existence follows because each $\l_i$ corresponds to a hook in $Y_{2\eta^*}$ that can be ``completed'' to $\l_{n-2}$ by a cell in the first row or first column, which is identified with some $s_j \in S_{2\eta^*}$.  
Uniqueness follows from the fact that no two distinct elements of $S_{2\eta^*}$ (except possibly $n-2$, which is treated separately) can sum to $\l_{n-2}$. Therefore, for each $\l_i \in \l$, there exists a unique $s_j \in S_{2\eta^*}$ such that $\l_i + s_j = \l_{n-2}$.\qedhere
\end{enumerate}
\end{proof}

\begin{corollary}\label{n_minus_2_in_set}
Under the same assumptions of the previous lemma, we have $n-2 \in S_{2\eta^*}$.
\end{corollary}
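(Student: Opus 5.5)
The plan is a counting argument by contradiction: assume $n-2\notin S_{2\eta^*}$, so that $n-2\in\l$. We then inject the missing parts into a set of parts of $\l$ that is too small. The ingredients are Corollary~\ref{cor:struct_lambda} and the ``no complementary pair'' statement of Lemma~\ref{hooks_diagonal_and_sums}(\ref{hooks_diagonal_and_sums2}).

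First, record the global picture. By Corollary~\ref{cor:struct_lambda}, $\l$ has exactly $n-2$ parts, its largest part is $\l_{n-2}=2n-4$, and $\#\mathcal{M}_\l=n-2$. Every integer in $\{1,\dots,2n-4\}$ is either a part of $\l$ or a missing part, and never both. Since $(n-2)+(n-2)=2n-4$, this is consistent and gives no immediate contradiction. The elements of $S_{2\eta^*}$ in $[1,2n-5]$ are exactly the missing parts, because $2n-4\in\l$.

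Next, suppose for contradiction that $n-2\in\l$. Take any $\mu\in\mathcal{M}_\l$; then $\mu\neq n-2$ and $1\le\mu\le 2n-5$. Consider $\mu'=2n-4-\mu$, which lies in $[1,2n-5]$ and satisfies $\mu'\neq n-2$. If $\mu'$ were in $S_{2\eta^*}$, then $\mu$ and $\mu'$ would be two elements of $S_{2\eta^*}$, both different from $n-2$, with $\mu+\mu'=\l_{n-2}$. This is excluded by Lemma~\ref{hooks_diagonal_and_sums}(\ref{hooks_diagonal_and_sums2}). Here I should check that the lemma allows $s_i=s_j$; this case does not arise anyway, since $\mu\neq n-2$ forces $\mu'\neq\mu$. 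Hence $\mu'\in\l$, and $\mu'\notin\{n-2,\,2n-4\}$.

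The map $\mu\mapsto 2n-4-\mu$ is injective. So it embeds $\mathcal{M}_\l$, which has $n-2$ elements, into $\l\setminus\{n-2,2n-4\}$, which has $n-4$ elements. This is a contradiction, so $n-2\in S_{2\eta^*}$.

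The only delicate point is relying on Lemma~\ref{hooks_diagonal_and_sums}(\ref{hooks_diagonal_and_sums2}) exactly as stated, namely for any two elements of $S_{2\eta^*}$ different from $n-2$. Its proof is phrased through hooks in the square and should be invoked as a black box. If one prefers not to, the fallback is to redo its argument directly: with $s_j<n-2$, the diagonal hook $h_{j+1,j+1}=\l_{n-2}-2s_j$ is positive, which places $j+1$ inside the constructed square. Lemma~\ref{hookSymmetrySquare} then gives $h_{1,j+1}=h_{j+1,1}\in\l$, contradicting $h_{1,j+1}=s_i\in S_{2\eta^*}$.
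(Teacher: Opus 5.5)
Your proof is correct, but it uses the other half of Lemma~\ref{hooks_diagonal_and_sums}(\ref{hooks_diagonal_and_sums2}) than the paper's proof does.

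The paper argues in one line from the \emph{existence} clause. If $n-2\in\lambda$, there is $s\in S_{2\eta^*}$ with $(n-2)+s=\lambda_{n-2}$, so $s=n-2\in S_{2\eta^*}$, a contradiction. You use only the \emph{exclusion} clause ($s_i+s_j\neq\lambda_{n-2}$ when $s_i,s_j\neq n-2$). You combine it with the cardinalities $|\lambda|=\#\mathcal{M}_\lambda=n-2$ from Corollary~\ref{cor:struct_lambda} and finish by pigeonhole.

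The paper's route is shorter. However, the existence clause evaluated at $\lambda_i=n-2$ says exactly ``$n-2\in\lambda\Rightarrow n-2\in S_{2\eta^*}$'', which is the corollary itself. The paper supports that clause only informally, via a hook ``completed to $\lambda_{n-2}$ by a cell in the first row or first column''.

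Your counting is what makes that clause rigorous. The set $\{1,\dots,2n-5\}$ splits into the $n-3$ pairs $\{x,2n-4-x\}$, $1\le x\le n-3$, plus the singleton $\{n-2\}$. By the exclusion clause each pair holds at most one missing part. Accommodating $n-2$ missing parts therefore forces $n-2\in S_{2\eta^*}$ and exactly one missing part per pair, which is the existence clause. So your version is the more self-contained of the two.

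One caveat concerns only your fallback. It inherits a circularity from the paper's proof of the exclusion clause: $h_{j+1,j+1}=\lambda_{n-2}-2s_j$ is known only once $j+1\le l+1$, so it cannot be used to place $j+1$ in the square. This can be repaired geometrically:
\begin{itemize}
\item $\#R_{l+1}=l+1+\eta_1\ge l+2$, while $c_{l+2,l+2}\notin Y_{2\eta^*}$, so $\#C_{l+2}=l+1$;
\item hence $h_{1,l+2}=\#C_{l+2}+(n-1)-(l+2)=n-2$, i.e.\ $s_{l+1}=n-2$;
\item so $s_j<n-2$ forces $j\le l$.
\end{itemize}
This observation on its own already proves the corollary.
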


\begin{proof}
Suppose, by contradiction, that $n-2 \in \l = S_{2\eta^*}^c$. By Lemma~\ref{hooks_diagonal_and_sums}(\ref{hooks_diagonal_and_sums2}), there exists some $s_i \in S_{2\eta^*}$ such that $n-2 + s_i = \l_{n-2}$, forcing $s_i = n-2 \in S_{2\eta^*}$, which is a contradiction.
\end{proof}

Before stating the next lemma, we observe that its the first and third items establish key properties of the column with index $l+2$, while the second reveals a symmetry between the blocks of hooks lying outside the main square and the $l+2$-th column.

\begin{lemma}\label{block2symmetry}
Let $\eta \in \mathbb{D}_k \setminus \{(3,k-3)\}$, and let $Y_{2\eta^*}$ be the associated Young diagram. Then the following hold:
\begin{enumerate}
    \item \label{colonnan-2l-2} 
   $h_{1,l+2} = n-2$;
    \item \label{uguaglianzafq}
    for all $l+2 \le i \le n-2$ and $2 \le j \le l+1$, we have
    \begin{equation*}
        h_{i,1} = h_{1,i+1} \quad \text{and} \quad h_{i,j} = h_{j,i+1};
    \end{equation*}
    \item \label{diag2l+1'}
    for $1 \le i \le l+1$, we have
    \begin{equation*}
        h_{i,i} = 2 h_{i,l+2}.
    \end{equation*}
    In other words,
    \begin{equation*}
        h_{i,l+2} = \eta_{\,l-(i-2)}.
    \end{equation*}
\end{enumerate}
\end{lemma}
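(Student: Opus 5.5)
Throughout, the plan is to work with the explicit row and column lengths of Lemma~\ref{rowsColsY} and to read the shape beyond the principal $(l+1)\times(l+1)$ square off Eq.~\eqref{equazionecostruzione'}. The first step is to settle the column $C_{l+2}$. By Lemma~\ref{rowsColsY}, the last diagonal cell $c_{l+1,l+1}$ has arm $a=\eta_1$ and leg $\eta_1-1$. Since this is the last diagonal cell, $c_{l+2,l+2}$ is absent, so row $R_{l+2}$ has at most $l+1$ cells. Hence the column $C_{l+2}$ has exactly $l+1$ cells: every row $R_i$ with $i\le l+1$ satisfies $\#R_i\ge i+1\ge l+2$ because its arm is at least $1$. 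This gives $l(c_{1,l+2})=l$. We then compute $h_{1,l+2}=\#R_1-(l+2)+1+l=n-1-l-2+1+l=n-2$, which is item~(\ref{colonnan-2l-2}).

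Item~(\ref{diag2l+1'}) comes from the same data. For $i\le l+1$ we have
\[
h_{i,l+2}=\#R_i-(l+2)+\#C_{l+2}-i+1=\#R_i-2i+\ldots
\]
More precisely, $\#R_i-(l+2)+(l+1)-i+1=\#R_i-i=a(c_{i,i})=h_{i,i}/2$, using $a=l+1$ on the diagonal, i.e. $h_{i,i}=2a(c_{i,i})$. Thus $h_{i,i}=2h_{i,l+2}$, and for $i\ge2$ this gives $h_{i,l+2}=\eta_{l-(i-2)}$; for $i=1$ it gives $n-2$, consistent with the first item.

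For item~(\ref{uguaglianzafq}) the plan is to compare the lengths of rows below the square with the lengths of columns beyond $C_{l+2}$, i.e. to prove $\#R_i=\#C_{i+1}$ for $l+2\le i\le n-2$, the analogue of Lemma~\ref{lem:simquad}. Once this is established, the hook identities follow mechanically. First, $h_{i,1}=\#R_i+(n-2-i)$ and $h_{1,i+1}=(n-1-(i+1))+\#C_{i+1}$, and these agree. Second, for $2\le j\le l+1$,
\[
h_{i,j}=\#R_i-j+\#C_j-i+1 \quad\text{and}\quad h_{j,i+1}=\#R_j-(i+1)+\#C_{i+1}-j+1,
\]
which coincide using $\#R_j=\#C_j+1$ from Lemma~\ref{rowsColsY}.

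The main obstacle is the equality $\#R_i=\#C_{i+1}$ in the tail region, since Eq.~\eqref{equazionecostruzione'} only prescribes diagonal data. The approach is a counting argument via conjugation. The columns $C_{i+1}$ for $i\ge l+2$ have length equal to $\#\{r\le l+1:\#R_r\ge i+1\}$, because no row below $l+1$ reaches column $l+3$. The rows $R_i$ for $i\ge l+2$ have length $\#\{c\le l+1:\#C_c\ge i\}$. Using $\#R_r=\#C_r+1$ for $r\le l+1$, the condition $\#R_r\ge i+1$ is equivalent to $\#C_r\ge i$, so the two counts coincide. The remaining care is the first row and column, where $\#R_1=n-1=\#C_1+1$ also holds, so the same equivalence applies. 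The exclusion of $\eta=(3,k-3)$ should not be needed here; it matters only later, for unrefinability.
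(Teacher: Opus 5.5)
Your argument is correct, but it takes a different route from the paper's. You work purely with the shape. The diagonal data $a(c_{i,i})=l(c_{i,i})+1$, together with the absence of $c_{l+2,l+2}$, give $\#C_{l+2}=l+1$. From this, items~(1) and~(3) are direct arm/leg counts. Item~(2) reduces to $\#R_i=\#C_{i+1}$, which you get from the conjugation count $\#R_i=\#\{c\le l+1:\#C_c\ge i\}=\#\{r\le l+1:\#R_r\ge i+1\}=\#C_{i+1}$, using $\#R_r=\#C_r+1$ for every $r\le l+1$.

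The paper instead reasons through the labels $s_i$ of $S_{2\eta^*}$ and Lemma~\ref{hookinterno}:
\begin{enumerate}
\item item~(1) by contradiction, writing $h_{1,l+2}=2n-4-s_{l+1}$ and deriving $h_{l+2,l+2}=2(n-2-s_{l+1})>0$;
\item the identity $h_{i,1}=h_{1,i+1}$ from the fact that the sequences $(h_{i,1})_{l+2\le i\le n-2}$ and $(h_{1,j})_{l+3\le j\le n-1}$ have the same length and are strictly decreasing;
\item item~(3) from $h_{i,i}=\lambda_{n-2}-2s_{i-1}$ (Lemma~\ref{hooks_diagonal_and_sums}), which yields $h_{i,l+2}=n-2-s_{i-1}$.
\end{enumerate}
Only your passage from $\#R_i=\#C_{i+1}$ to $h_{i,j}=h_{j,i+1}$ essentially matches the paper's computation.

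Your route is self-contained and tighter. The paper's item~(1) uses $s_{l+1}<n-2$ and $h_{l+2,1}=h_{1,l+2}$ without separate justification, and Lemma~\ref{hookSymmetrySquare} only covers indices up to $l+1$. Also, equal length plus strict monotonicity forces termwise equality only once the two value sets are known to coincide; your count settles $\#R_i=\#C_{i+1}$ outright. Your route also makes clear that excluding $(3,k-3)$ plays no role here, which fits the paper's later reuse of the lemma for all $\eta\in\mathbb{D}_k$ in Section~\ref{sec:nontri4}. The paper's route, in exchange, gives the hooks directly in terms of the $s_i$, the form the later unrefinability lemmas use. From your version that form is recovered via Lemma~\ref{hookSymmetrySquare} and $h_{1,i}=h_{1,1}-s_{i-1}$.

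Two cosmetic slips. The chain $\#R_i\ge i+1\ge l+2$ is false for $i<l+1$; it should read $\#R_i\ge\#R_{l+1}=l+1+\eta_1\ge l+2$, by monotonicity of row lengths. The unfinished expression $\#R_i-2i+\ldots$ should be removed.
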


\begin{proof}
We prove the claims by considering each item separately.
\begin{enumerate}
    \item From the construction of $Y_{2\eta^*}$, we know $h_{l+1,l+1} = 2 \eta_1 > 0$. Suppose $h_{1,l+2} = 2n-4 - s_{l+1}\neq n-2$, so that $s_{l+1} < n-2$. Then $h_{l+2,1} = 2n-4 - s_{l+1}$ as well. By Eq.~\eqref{equazionehookinterno}, we get
    \begin{equation*}
        h_{l+2,l+2} = h_{1,l+2} + h_{l+2,1} - h_{1,1} = 2(n-2-s_{l+1}) > 0,
    \end{equation*}
    which is impossible by construction.

    \item Since 
		\[
		\#\{h_{i,1} \mid l+2 \leq i \leq n-2\} = \#\{h_{1,j} \mid l+3 \leq j \leq n-1\},
		\]
		and given that $h_{i,1} > h_{i+1,1}$ and $h_{1,i} > h_{1,i+1}$, we can conclude that
		\[
		h_{i,1} = h_{1,i+1}.
		\]

Then, for $2 \le j \le l+1$, we obtain
\[
    \begin{aligned}
			h_{i,j}&=\#R_i-j+\#C_i-j+1\\
			&=h_{i,1}-(n-2-i)-j+h_{1,j}-(n-1-j)-i+1\\
			&=h_{1,i+1}-(n-1-(i+1))-j+h_{j,1}-(n-2-j)-(i+1)+1\\
			&=\#C_{i+1}-j+\#R_j-i+1\\
			&=h_{j,i+1}.
		\end{aligned}
		\]
    \item For $i=1$, $h_{1,1} = 2n-4 = 2(n-2) = 2 h_{1,l+2}$ by item~(\ref{colonnan-2l-2}). For $2 \le i \le l+1$, by Lemma~\ref{hooks_diagonal_and_sums}(\ref{hookdiagonale'}) we have
    \begin{equation*}
        h_{i,i} = h_{1,1} - 2 s_{i-1} = 2(n-2 - s_{i-1}),
    \end{equation*}
    and by Eq.~\eqref{equazionehookinterno} we have
    \begin{equation*}
  h_{i,l+2}=h_{i,1}+h_{1,l+2}-h_{1,1}=h_{1,i}-(n-2)=n-2-s_{i-1}.\qedhere
    \end{equation*}
\end{enumerate}
\end{proof}

\begin{example}
	Let us consider the partition $\eta=(1,3,4)\in\mathbb{D}_8$ and let us apply the Young diagram construction introduced in this section to this partition. 
	Doing so, we recover exactly the diagram shown in Fig.~\ref{fig:ex-final} of Example~\ref{exa:main}. 
	In particular, one can observe the hooks corresponding to the column with index $l+2=5$ (which contains $\eta$), as well as the symmetries among the hooks colored in the same way.
\end{example}

Before concluding that the partition $\l$ obtained from $\eta$ is unrefinable, we first verify that its weight coincides with $T_n$.

\begin{proposition}\label{pesolambda}
	Let $\eta\in\mathbb{D}_k\setminus \{(3,k-3)\}$. Then
	\[
	\l \vdash T_n.
	\]
\end{proposition}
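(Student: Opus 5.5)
The weight of $\l$ is the sum of the first-column hooks, because by Proposition~\ref{riconoscimentoNS} the parts of $\l$ are exactly the labels $h_{i,1}$, $1\le i\le n-2$. Each such hook splits as $h_{i,1}=\#R_i+(\#C_1-i)$, and $\#C_1=n-2$ by Corollary~\ref{cor:struct_lambda}. Summing over $i$, I expect
\[
\sum_{i=1}^{n-2}h_{i,1}=\#Y_{2\eta^*}+\sum_{i=1}^{n-2}(n-2-i)=\#Y_{2\eta^*}+T_{n-3}.
\]
This is the same manipulation used to count cells in the direct direction, now run backwards. Since $T_n-T_{n-3}=3n-3$, the claim $\l\vdash T_n$ is equivalent to showing that $Y_{2\eta^*}$ has exactly $3n-3$ cells.

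To count the cells, I would use the Durfee-square (diagonal hook) decomposition. The diagram is the disjoint union of the principal hooks of the diagonal cells $c_{i,i}$, and each such hook contains $h_{i,i}$ cells. So $\#Y_{2\eta^*}=\sum_i h_{i,i}$, the sum running over all diagonal cells. Two facts must then be checked.

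\begin{enumerate}
\item The diagonal has exactly $l+1$ cells. By Lemma~\ref{block2symmetry}(\ref{diag2l+1'}) we have $h_{l+1,l+2}=\eta_1\ge1$, so $c_{l+1,l+2}$ exists. To exclude $c_{l+2,l+2}$, I would argue as in Lemma~\ref{lemma:main_diagonal}. By Lemma~\ref{block2symmetry}(\ref{colonnan-2l-2}), $s_{l+1}=n-2$. Combining Eq.~\eqref{equazionehookinterno} with Lemma~\ref{block2symmetry}(\ref{uguaglianzafq}) then gives
\[
h_{l+2,l+2}=h_{1,l+2}+h_{1,l+3}-h_{1,1}=(n-2)-s_{l+2}<0,
\]
so that cell cannot exist. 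More directly, the construction~\eqref{equazionecostruzione'} prescribes only $l+1$ diagonal cells, so this is essentially built in; the thing to confirm is that the shape forced by Lemma~\ref{rowsColsY} is a genuine Young diagram with no further diagonal cell.
\item The diagonal hooks are $h_{1,1}=2n-4$ and $h_{i,i}=2\eta_{l-(i-2)}$ for $2\le i\le l+1$.
\end{enumerate}

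Granting both facts, the count is
\[
\#Y_{2\eta^*}=2n-4+2\sum_{j=1}^{l}\eta_j=2n-4+2k=2n-4+(n+1)=3n-3,
\]
using $n=2k-1$. Therefore $|\l|=3n-3+T_{n-3}=T_n$.

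The main obstacle is the legitimacy of the diagram rather than the arithmetic. One must confirm that the rows and columns prescribed by Lemma~\ref{rowsColsY} are weakly decreasing and that the diagonal stops exactly at $l+1$. This uses that $\eta$ has distinct parts, and I expect it also needs the exclusion of $(3,k-3)$, which should prevent a collision with $n-2$ in the first row. I would verify monotonicity from $\eta_{l-(i-2)}+i$ being strictly decreasing in $i$, which holds because the $\eta_j$ are distinct integers.
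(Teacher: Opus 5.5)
Your proof is correct and is essentially the paper's argument. Both sum the first-column hooks, extract the triangular number $T_{n-3}$ from the legs, and account for the remaining cells through the diagonal hooks $2n-4$ and $2\eta_j$; the paper's unexplained step $\sum_{i=2}^{n-2}a(c_{i,1})=\sum_{i=1}^{l}2\eta_i$ is exactly your principal-hook decomposition applied to the diagram with its first row and column removed.

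Two small corrections to your hedges. The exclusion of $(3,k-3)$ plays no role in the weight count; it matters only for unrefinability. Also, the row lengths $\eta_{l-(i-2)}+i$ are only weakly decreasing in $i$, not strictly. The clean validity check is that the Frobenius arms $n-2>\eta_l>\dots>\eta_1$ and legs $n-3>\eta_l-1>\dots>\eta_1-1\ge 0$ are strictly decreasing, which holds for $k\ge 4$.
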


\begin{proof}
	The weight of $\l$ is given by the sum of the hooks in the first column. Hence,
	\[
	\begin{aligned}
		\begin{aligned}
			\sum_{i=1}^{n-2}h_{i,1}&=2n-4+\sum_{i=2}^{n-2}h_{i,1}\\
			&=2n-4+\sum_{i=2}^{n-2}\left(l(c_{i,1})+1\right)+a(c_{i,1})\\
				&=2n-4+\sum_{i=1}^{n-3}i+\sum_{i=2}^{n-2}a(c_{i,1})\\
			&=2n-4+\sum_{i=1}^{n-3}i+\sum_{i=1}^{l}2\eta_i\\
			&=2n-4+\frac{(n-3)(n-2)}{2}+2k\\
			&=\frac{1}{2}(4n-8+n^2-5n+6+2n+2)\\
			&=\frac{1}{2}(n^2+n)\\
			&=T_n.
		\end{aligned}
	\end{aligned}
	\]
\end{proof}
\subsubsection{Proof of unrefinability}\label{subsec:unref}

We now reach the last stage of the argument: proving that the partition $\lambda$
constructed from $\eta$ as in Sec.~\ref{sec:ritornotriang} is unrefinable.  

According to Proposition~\ref{riconoscimentounref}, a partition is unrefinable precisely when
every hook outside the first column satisfies one of the following conditions:
\begin{enumerate}
    \item it coincides with one of the hooks of the first column (Proposition~\ref{riconoscimentounref}(\ref{item-ric3a})),
    \item it is exactly one half of the hook of the cell in the same row (Proposition~\ref{riconoscimentounref}(\ref{item-ric3b})).
\end{enumerate}

Therefore, to prove that $\lambda$ is unrefinable, we proceed by contradiction. 
Namely, assuming that a hook $s_\alpha$ outside the first column does {not} satisfy 
the first condition (i.e.\ $s_\alpha \notin \l=S^c_{2\eta^*}$), we will show that such a hook either necessarily falls into 
the second case, or its existence leads to a contradiction.
To do this, we split the diagram into three natural regions:
\begin{enumerate}
    \item hooks lying inside the main square ($2 \le i,j \le l+1$), i.e.\ the green-colored area in Fig.~\ref{fig:ex-final};
    \item hooks in the shifted column with index $l+2$, i.e.\ the pink-colored area in Fig.~\ref{fig:ex-final};
    \item hooks below the square (and, by symmetry, also those to the right of the main square and of the extra column $l+2$), 
i.e.\ the yellow-colored area in Fig.~\ref{fig:ex-final}.
\end{enumerate}

In the next lemmas we analyze each region separately, denoting by $s_\alpha$ one of the hooks $h_{i,j}$ of $Y_{2\eta^*}$. 

\begin{lemma}\label{salphaidiversoj}
Let $2 \le i,j \le l+1$ with $i \neq j$, and suppose that $s_\alpha$ is a hook length outside the first column, as defined above. Then the existence of such an $s_\alpha$ leads to a contradiction.
\end{lemma}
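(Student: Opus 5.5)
We argue by contradiction. Assume a cell $c_{i,j}$ with $2\le i,j\le l+1$, $i\neq j$, has hook $h_{i,j}=s_\alpha\in S_{2\eta^*}$. The plan is first to compute $h_{i,j}$ explicitly and then to play it against the pairing property of Lemma~\ref{hooks_diagonal_and_sums}.

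\textbf{Step 1: an explicit formula.} Combine Eq.~\eqref{equazionehookinterno} with Lemma~\ref{hookSymmetrySquare}(\ref{C_1R_1q}), which gives $h_{j,1}=h_{1,j}$. Together with $h_{1,m}=\l_{n-2}-s_{m-1}$, this yields
\[
h_{i,j}=h_{1,i}+h_{1,j}-h_{1,1}=\l_{n-2}-s_{i-1}-s_{j-1}.
\]
Since $i,j\le l+1$, the indices satisfy $i-1,j-1\le l$. Lemma~\ref{block2symmetry}(\ref{colonnan-2l-2}) gives $s_{l+1}=n-2$, so $s_{i-1},s_{j-1}<n-2$. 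Moreover, by Lemma~\ref{block2symmetry}(\ref{diag2l+1'}) and Lemma~\ref{hooks_diagonal_and_sums}(\ref{hookdiagonale'}), we have $n-2-s_{m-1}=\eta_{l-(m-2)}$. Hence
\[
h_{i,j}=\eta_{l-(i-2)}+\eta_{l-(j-2)}.
\]
So the off-diagonal hooks of the square are exactly the sums of two distinct parts of $\eta$.

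\textbf{Step 2: complementarity of $\l$ and $S_{2\eta^*}$ below $2n-4$.} Lemma~\ref{hooks_diagonal_and_sums}(\ref{hooks_diagonal_and_sums2}) attaches to each part of $\l$ a unique element of $S_{2\eta^*}$ summing with it to $2n-4$. Corollary~\ref{cor:struct_lambda} gives $|\l|=\#\mathcal M_\l=n-2$, and Corollary~\ref{n_minus_2_in_set} gives $n-2\in S_{2\eta^*}$. By counting, this pairing is a bijection between $\l$ and $(\{0\}\cup\mathcal M_\l)\setminus\{n-2\}$. Hence, for every $x\neq n-2$ with $0\le x\le 2n-4$,
\[
x\in S_{2\eta^*}\iff 2n-4-x\in\l .
\]

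\textbf{Step 3: contradiction, split on whether $s_\alpha=n-2$.}

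If $s_\alpha\neq n-2$, Step 2 gives $s_{i-1}+s_{j-1}=2n-4-s_\alpha\in\l$. We then look at the cell $c_{i,\alpha+1}$. If it lies in the diagram, its hook is $h_{i,1}-s_\alpha=s_{j-1}$, an element of $S_{2\eta^*}$. We would then iterate this observation using Lemma~\ref{hooksymmetryglobal}-type symmetries, i.e.\ Lemma~\ref{hookSymmetrySquare} and Lemma~\ref{block2symmetry}(\ref{uguaglianzafq}), to reach a cell of the square whose hook lies in $S_{2\eta^*}$ but has smaller value. Choosing $s_\alpha$ minimal among such offending hooks should give the contradiction. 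I would also try to derive it directly: the sum of two elements of $S_{2\eta^*}$ below $n-2$ should be prevented from lying in $\l$ by Lemma~\ref{hooks_diagonal_and_sums}(\ref{hooks_diagonal_and_sums2}) applied to $s_{i-1}$ and $s_\alpha+s_{j-1}$.

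If $s_\alpha=n-2$, then $\eta_{l-(i-2)}+\eta_{l-(j-2)}=n-2=2k-3$. Since $\sum\eta=k$, the remaining parts of $\eta$ sum to $3-k<0$ for $k\ge4$, which is impossible. So this case is excluded outright.

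\textbf{Main obstacle.} The real difficulty is the case $s_\alpha\neq n-2$, i.e.\ showing that $s_\alpha\in S_{2\eta^*}$, $s_{i-1},s_{j-1}<n-2$ and $s_{i-1}+s_{j-1}\in\l$ cannot all hold. Here I expect to need the hypothesis $\eta\neq(3,k-3)$ together with the size constraint $\sum\eta=k$. A hook of the form $\eta_a+\eta_b$ exceeds $k$ only in degenerate configurations, and this should pin $s_\alpha$ into a range where Step 2 forces it into $\l$.
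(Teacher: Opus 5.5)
Your Step~1 and your treatment of the case $s_\alpha=n-2$ are correct. The case $s_\alpha\neq n-2$, however, is the entire content of the lemma, and it is left unproved. The minimal-counterexample iteration is only sketched, and it cannot close on its own: passing from $c_{i,j}$ to $c_{i,\alpha+1}$ just permutes the three parts of $\eta$ involved, so nothing decreases. The ``direct'' route fails outright. By your own Step~2, $s_\alpha+s_{j-1}=2n-4-s_{i-1}$ lies in $\lambda$, not in $S_{2\eta^*}$, so Lemma~\ref{hooks_diagonal_and_sums}(\ref{hooks_diagonal_and_sums2}) says nothing about the pair $s_{i-1}$, $s_\alpha+s_{j-1}$. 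Your remark that $\eta_a+\eta_b$ exceeds $k$ ``only in degenerate configurations'' also points the wrong way. For distinct indices one always has $\eta_a+\eta_b\le k$, and that bound is exactly what is needed.

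The missing idea is to locate $s_\alpha$ inside $S_{2\eta^*}$. Since $0<s_\alpha=\eta_{l-(i-2)}+\eta_{l-(j-2)}\le k<2k-3=n-2=s_{l+1}$, the index satisfies $1\le\alpha\le l$. The identity you already derived in Step~1, applied with $m=\alpha+1$, then gives $s_\alpha=n-2-\eta_{l-(\alpha-1)}$, that is,
\[
\eta_{l-(i-2)}+\eta_{l-(j-2)}+\eta_{l-(\alpha-1)}=2k-3 .
\]
If $\alpha+1\notin\{i,j\}$, the left-hand side is a sum of three distinct parts of $\eta$, hence at most $k<2k-3$, a contradiction.

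If, say, $\alpha+1=i$, then $2\eta_{l-(i-2)}+\eta_{l-(j-2)}=2k-3$. Together with $\eta_{l-(i-2)}+\eta_{l-(j-2)}\le k$, this forces $\eta_{l-(j-2)}$ to be odd and at most $3$. The value $1$ gives $\eta_{l-(i-2)}=k-2$, so the remaining parts of $\eta$ sum to $1$, contradicting distinctness. The value $3$ gives $\eta=(3,k-3)$, which is excluded. This is the paper's argument, and it is where the hypothesis $\eta\neq(3,k-3)$ enters; the complementarity of your Step~2 is not needed.
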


\begin{proof}
First, by Eq.~\eqref{equazionehookinterno} and Lemma~\ref{hookSymmetrySquare}, the hook length $s_{\alpha}$ can be expressed as
\[
\begin{aligned}
s_\alpha=h_{i,j}&=h_{1,j}+h_{i,1}-h_{1,1}\\
&= h_{1,j}+h_{1,i}-h_{1,1}\\
&= \l_{n-2}-s_{j-1}+\l_{n-2}-s_{i-1}-\l_{n-2}\\
&= \l_{n-2}-s_{j-1}-s_{i-1}.\\
\end{aligned}
\]
For convenience we rewrite this equivalently in terms of the parts of $\eta$:
\begin{equation}\label{eq:boh}
s_{\alpha} = \eta_{l-(i-2)} + \eta_{l-(j-2)}.
\end{equation}
This equivalence follows from the relation 
$
2\eta_{l-(x-2)} = h_{x,x} = \l_{n-2} - 2 s_{x-1}$, so that $s_{x-1} = (n-2) - \eta_{l-(x-2)}$, for $2 \le x \le l+1$.

Now, let us assume $\alpha+1\neq i,j$. Then, since $\eta\vdash k$, we have
		$
		s_{\alpha}=\eta_{l-(i-2)}+\eta_{l-(j-2)}\leq k
		$.
		 For $n>5$, we have $n-2>\frac{n+1}{2}=k$, and then $s_{\alpha}<n-2$. This implies  
		\[
		h_{\alpha+1,\alpha+1}=2\eta_{l-(\alpha-1)}=2n-4-2s_{\alpha}>0
		\]
		and 
		\[
		n-2=\eta_{l-(i-2)}+\eta_{l-(j-2)}+\eta_{l-(\alpha-1)}\leq k,
		\]
		a contradiction.\\
		Without loss of generality, let us now assume $\alpha=i-1$. Then we have 
		\begin{equation}\label{equazioneidiversoj}
			n-2=\eta_{l-(j-2)}+2\eta_{l-(i-2)}.
		\end{equation}
		Computing
		\[
		\eta_{l-(j-2)}+\eta_{l-(i-2)}=\eta_{l-(j-2)}+\frac{n-2-\eta_{l-(j-2)}}{2}=\frac{n-2+\eta_{l-(j-2)}}{2},
		\]
		since $\eta\vdash k$, we obtain
		\[
		n-2+\eta_{l-(j-2)}\leq 2k=n+1.
		\]
		Symplifying,
		\[
		\eta_{l-(j-2)}\leq3.
		\]
		We distinguish now the three possible cases:
		\begin{itemize}
			\item if $\eta_{l-(j-2)} = 1$, then 
			\[
			\eta_{l-(i-2)} = \frac{n-3}{2} = \frac{2k-4}{2} = k-2,
			\]
			but $\eta \vdash k$, hence $\eta = (1,1,k-2)$, a contradiction;  
			\item if $\eta_{l-(j-2)} = 2$, then $\eta_{l-(i-2)} = \frac{2k-5}{2} \notin \mathbb{Z}$, a contradiction; 
			\item if $\eta_{l-(j-2)} = 3$, then $\eta_{l-(i-2)} = k-3$ and $\eta = (3,k-3)$, again a contradiction.\qedhere
		\end{itemize}
	\end{proof}

\begin{lemma}\label{salphai=j}
Let $2 \le i = j \le l+1$, and assume that a hook outside the first column has length $s_\alpha$, as defined above. Then the existence of such an $s_\alpha$ forces the relation
\[
h_{\alpha+1,1} = 2 h_{\alpha+1,\alpha+1}.
\]
\end{lemma}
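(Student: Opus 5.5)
The plan is to mirror the computation of Lemma~\ref{salphaidiversoj}, now for a diagonal cell. For $2\le i\le l+1$ the construction \eqref{equazionecostruzione'} gives
\[
s_\alpha=h_{i,i}=2\eta_{l-(i-2)}=\l_{n-2}-2s_{i-1}
\]
by Lemma~\ref{hooks_diagonal_and_sums}(\ref{hookdiagonale'}). Since $\eta\vdash k$ and $n+1=2k$, this gives $s_\alpha\le 2k=n+1$. Moreover $s_\alpha$ is even while $n-2=2k-3$ is odd, so $s_\alpha\neq n-2$. This leaves two cases: $s_\alpha<n-2$ (the generic case) and $s_\alpha\in\{n-1,n+1\}$ (two boundary cases).

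\emph{Generic case $s_\alpha<n-2$.} The column index $\alpha+1$ then lies inside the main square. Lemma~\ref{hooks_diagonal_and_sums}(\ref{hookdiagonale'}) gives
\[
h_{\alpha+1,\alpha+1}=2n-4-2s_\alpha>0,
\]
so $h_{\alpha+1,\alpha+1}=2\eta_b$ for some part $\eta_b$ of $\eta$. Substituting $s_\alpha=2\eta_a$, where $\eta_a=\eta_{l-(i-2)}$, yields the key identity
\[
n-2=2\eta_a+\eta_b .
\]
\begin{itemize}
\item If $b\neq a$, then $\eta_a+\eta_b\le k$. Combined with $2\eta_a+\eta_b=2k-3$, this gives $\eta_a\ge k-3$ and $\eta_b=2k-3-2\eta_a$. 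Exactly as in Lemma~\ref{salphaidiversoj}, the possibilities are $(\eta_a,\eta_b)=(k-2,1)$, which is impossible since the remaining parts would have to sum to $1$ while being distinct from $1$, and $(k-3,3)$, which is excluded because $\eta\neq(3,k-3)$.
\item Hence $b=a$, i.e.\ $\alpha+1=i$ and $3s_\alpha=2n-4$. In this situation $h_{\alpha+1,\alpha+1}=s_\alpha$. By Lemma~\ref{hookSymmetrySquare}(\ref{C_1R_1q}),
\[
h_{\alpha+1,1}=h_{1,\alpha+1}=\l_{n-2}-s_\alpha=2s_\alpha ,
\]
which is precisely the claimed relation, i.e.\ alternative (\ref{item-ric3b}) of Proposition~\ref{riconoscimentounref}.
\end{itemize}

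\emph{Boundary cases $s_\alpha\in\{n-1,n+1\}$.} These force $\eta=(1,k-1)$ or $\eta=(k)$ respectively, with $s_\alpha>n-2$. The index $\alpha+1$ then lies beyond the column $C_{l+2}$, and the square identities are no longer available. Instead I would use Lemma~\ref{block2symmetry}(\ref{uguaglianzafq}), namely $h_{1,\alpha+1}=h_{\alpha,1}$, together with the pairing of Lemma~\ref{hooks_diagonal_and_sums}(\ref{hooks_diagonal_and_sums2}): since $s_\alpha\neq n-2$, its complement $\l_{n-2}-s_\alpha$ must be a part of $\l$. For $\eta=(k)$ this says $n-5\in\l$, and for $\eta=(1,k-1)$ it says $n-3\in\l$. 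I would then compute the explicit first column of $Y_{2\eta^*}$ for these two tiny diagrams (with $l\le 2$, the column is essentially $1,2,\dots$ plus a couple of large hooks) and check directly whether $s_\alpha$ in fact occurs there. If it does, the hypothesis ``$s_\alpha$ is a hook outside the first column not lying in $\l$'' is violated, and we obtain a contradiction rather than the relation.

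I expect these boundary cases to be the main obstacle. The generic case is a routine rerun of Lemma~\ref{salphaidiversoj}, whereas here the symmetric square structure is unavailable and the index bookkeeping ($\alpha$ versus $\alpha+1$, and the shift after column $l+2$) is delicate. If the direct computation does not immediately place $s_\alpha$ in the first column, I would fall back on the explicit form of $\l$ for these two $\eta$, obtained from Lemma~\ref{rowsColsY}, and verify the claim by hand.
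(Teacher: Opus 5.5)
Your case $s_\alpha<n-2$ (including the split $\alpha\neq i-1$ versus $\alpha=i-1$) and your parity exclusion of $s_\alpha=n-2$ coincide with the paper's proof. Your bound $\eta_a\ge k-3$ is the paper's $\eta_{l-(j-2)}\le 3$ written differently. State explicitly that $\alpha+1\le l+1$ because $s_{l+1}=n-2$ by Lemma~\ref{block2symmetry}(\ref{colonnan-2l-2}); this is what makes Lemma~\ref{hooks_diagonal_and_sums}(\ref{hookdiagonale'}) applicable at $\alpha+1$.

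The gap is in the boundary case: you treat $\eta=(k)$ as something to verify by hand, but there the statement is false. With $l=1$ one has $\#R_1=2k-2$, $\#R_2=k+2$, $\#C_1=2k-3$, $\#C_2=k+1$, and only two diagonal cells. So the first column reads $4k-6,\,3k-3,\,2k-4,\dots,k-2,\,k-4,\dots,1$, that is, $\lambda=(1,\dots,k-4,k-2,\dots,2k-4,3k-3,4k-6)$. This is $\zeta_{n,k}$ of Section~\ref{sec:nontri4} with $n=2k-1$.
\begin{itemize}
\item $s_\alpha=h_{2,2}=2k$ is an element of $S_{2\eta^*}$, since it lies strictly between $2k-4$ and $3k-3$.
\item The cell $c_{\alpha+1,\alpha+1}$ does not exist, because $s_\alpha>n-2=s_{l+1}$.
\item $h_{2,1}=3k-3\neq 4k$.
\item $\lambda$ is refinable: $3k-3=(k-3)+2k$ with both summands missing.
\end{itemize}
No computation can rescue this case. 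It must be discarded by the convention of Section~\ref{sec:part} that partitions into distinct parts have $t\ge 2$ parts, so $(k)\notin\mathbb{D}_k$. The paper uses this tacitly when it says that $\eta_{l-(i-2)}\ge k-1$ forces $\eta=(1,k-1)$.

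For $\eta=(1,k-1)$ your plan does work, and it is quicker than the paper's reconstruction of the diagram. Here $l=2$, so $h_{3,3}=2\eta_1=2$, and Lemma~\ref{hooks_diagonal_and_sums}(\ref{hookdiagonale'}) gives $s_2=2k-4$. By Lemma~\ref{hookSymmetrySquare}(\ref{C_1R_1q}), $h_{3,1}=h_{1,3}=\lambda_{n-2}-s_2=2k-2=h_{2,2}$. So $h_{2,2}$ already appears in the first column, contradicting $s_\alpha\in S_{2\eta^*}$. Equivalently, your pairing remark (``$n-3\in\lambda$'') clashes with $n-3=2k-4=s_2\in S_{2\eta^*}$: $s_2+s_\alpha=\lambda_{n-2}$ with neither equal to $n-2$, against Lemma~\ref{hooks_diagonal_and_sums}(\ref{hooks_diagonal_and_sums2}). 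No bookkeeping beyond column $l+2$ is needed.
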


\begin{proof}
From the hypotheses we have
		\begin{equation}\label{equazionei=jsalpha}
			s_{\alpha} = h_{i,i} = 2\eta_{l-(i-2)}.
		\end{equation}
We proceed by distinguishing three possibile cases,  starting by assuming  $s_{\alpha} < n-2$. In this case
			\[
			\l_{n-2} - 2s_{\alpha}=h_{\alpha+1,\alpha+1} = 2\eta_{l-(\alpha-1)} > 0.
			\]
			Rewriting, we obtain
			\[
			n-2 = s_{\alpha} + \eta_{l-(\alpha-1)}.
			\]
			Using Eq.~\eqref{equazionei=jsalpha}, this gives
			\begin{equation}\label{equazionei=jn-2}
				n-2 = \eta_{l-(\alpha-1)} + 2\eta_{l-(i-2)}.
			\end{equation}
			We distinguish now two sub-cases: 
			\begin{itemize}
				\item if $\alpha \neq i-1$, we obtain the same contradiction given from Eq.~\eqref{equazioneidiversoj}, swapping the role of \(\alpha\) and \(j\);
				\item if $\alpha = i-1$, then Eq.~\eqref{equazionei=jn-2} gives
				$
				n-2 = 3\eta_{l-(\alpha-1)}
				$
				or equivalently
				$
				\l_{n-2} = 6\eta_{l-(\alpha-1)} = 3 s_{\alpha}.
				$
				Hence
				\[
				\begin{aligned}
					&h_{\alpha+1,1} = h_{1,\alpha+1} = \l_{n-2} - s_{\alpha} = 2 s_{\alpha},\\
					&h_{\alpha+1,\alpha+1} = \l_{n-2} - 2 s_{\alpha} = s_{\alpha},
				\end{aligned}
				\]
				from which the claimed equality follows.
			\end{itemize}
			
			 Assume now $s_{\alpha} = n-2$. From Eq.~\eqref{equazionei=jsalpha} we have
			\[
			2\eta_{l-(i-2)} = s_{\alpha} = n-2 = 2k-3,
			\]
			which is impossible since $\eta_{l-(i-2)}$ is an integer.
			
			Finally, assume $s_{\alpha} > n-2$, hence $s_{\alpha} \geq n-1 = 2k-2$. Then, from Eq.~\eqref{equazionei=jsalpha}, we get
			$
			\eta_{l-(i-2)} \geq k-1.
			$
		Since $\eta \vdash k$, we must have $\eta = (1,k-1)$.  
To reconstruct the Young diagram in this case, we determine all relevant
quantities; for convenience, we collect them in Tab.~\ref{fig:contradiction-salpha}.

\begin{table}
\begin{tabular}{c|c}
\hline
\textbf{Parameter} & \textbf{Value} \\
\hline\hline
$l$ & $2$ \\
$i$ & $2$ \\
$\lambda_{n-2}$ & $4k-6$ \\
$s_{\alpha}=h_{2,2}$ & $2k-2$ \\
$s_{i-1}$ & $k-2$ \\
$h_{1,\alpha+1}$ & $2k-4$ \\
$h_{i,\alpha+1}$ & $k-2$\\
\hline
\end{tabular}
\medskip

\caption{Parameters for the diagram of the partition $(1,k-1)$ in the proof of Lemma~\ref{salphai=j}.}
\label{fig:contradiction-salpha}
\end{table}

Observing that $\alpha = 4$, we attempt to reconstruct the Young diagram $Y_{{2\eta^*}}$ under the assumption $\eta = (1, k-1)$, as illustrated in Fig.~\ref{fig:attempt-diagram}, with the relevant hook lengths and parameters indicated. Following the construction, we compute
\[
k-1 = h_{2,4} = h_{2,5}+2 = k,
\]
which immediately leads to a contradiction.\qedhere

\begin{figure}
			\[
			\begin{tikzpicture}
				\draw[color=black, fill=red!30] (0,0) rectangle (1.5,-0.5) rectangle (3,-1) rectangle (4.5,-1.5);
				\draw[color=black, fill=pink!30]  (4.5,-1.5) rectangle (6,-1) rectangle (4.5,-0.5) rectangle (6,0);
				\draw[color=black, fill=yellow!30]  (6,0) rectangle (7.5,-0.5) rectangle (6,-1);
			%	\draw (0,0) rectangle (1.5,-0.5) rectangle (3,-1) rectangle (4.5,-1.5) rectangle (6,-1) rectangle (4.5,-0.5) rectangle (6,0) rectangle (7.5,-0.5) rectangle (6,-1);
				\node at (0.75,-0.25) {$4k-6$};
				\node at (2.25,-0.75) {$2k-2$};
				\node at (3.75,-1.25) {$2$};
				\node at (5.25,-1.25) {$1$};
				\node at (5.25,-0.75) {$k-1$};
				\node at (5.25,-0.25) {$2k-3$};
				\node at (0.75,0.25) {\scriptsize{$1$}};
				\node at (-0.25,-0.25) {\scriptsize{$1$}};
				\node at (5.25,0.25) {\scriptsize{$l+2=4$}};
				\node at (6.75,-0.75) {$k-2$};
				\node at (6.75,-0.25) {$2k-4$};
	
				\draw[pattern=north east lines, pattern color=black!55, dashed] 
      			(4.5,-1.5) rectangle (3,-2);
      			\draw[pattern=north east lines, pattern color=black!55, dashed] 
      			(4.5,-1.5) rectangle (6,-2);
      			\draw[pattern=north east lines, pattern color=black!55, dashed] 
      			(6,-1) rectangle (7.5,-1.5);
				%\node at (6.75,-1.25) {X};
				%\node at (5.25,-1.75) {X};
				%\node at (3.75,-1.75) {X};
				\node at (9,-0.25) {$\cdots$};
				\node at (0.75,-2) {$\vdots$};
			\end{tikzpicture}
			\]
			  \caption{Attempted reconstruction of the Young diagram $Y_{{2\eta^*}}$ for $\eta = (1,k-1)$ showing the relevant hook lengths.}
    \label{fig:attempt-diagram}
			\end{figure}
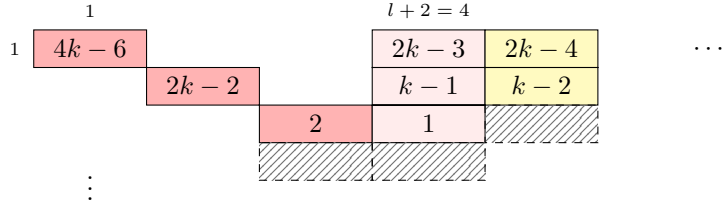
\end{proof}

We now focus on the elements located outside the main square.

\begin{lemma}\label{lemmaboh}
    Let $1 \le j \le l+1$ and $l+2 \le i \le n-2$. Then the existence of such an $s_{\alpha}$ leads to a contradiction.
\end{lemma}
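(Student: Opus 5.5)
The plan is to express the hook $s_\alpha = h_{i,j}$ in terms of the first-row data and the parts of $\eta$, and then compare its size with $n-2$. The case $j=1$ is trivial, because such a hook already lies in the first column. So assume $2\le j\le l+1$ and $l+2\le i\le n-2$.

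\textbf{Step 1: a formula for the hook.} By Eq.~\eqref{equazionehookinterno} and Lemma~\ref{block2symmetry}(\ref{uguaglianzafq}) we have $h_{i,1}=h_{1,i+1}=\l_{n-2}-s_i$ and $h_{1,j}=\l_{n-2}-s_{j-1}$. Hence
\[
s_\alpha=h_{i,j}=\l_{n-2}-s_{j-1}-s_i .
\]
As in the proof of Lemma~\ref{salphaidiversoj}, $s_{j-1}=(n-2)-\eta_{l-(j-2)}$. By Lemma~\ref{block2symmetry}(\ref{colonnan-2l-2}) we have $s_{l+1}=n-2$, and since $i\ge l+2$ this gives $s_i>n-2$. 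Writing $u=s_i-(n-2)\ge 1$, we get
\[
s_\alpha=\eta_{l-(j-2)}-u<\eta_{l-(j-2)}\le k<n-2,
\]
using $n-2=2k-3>k$ for $k\ge4$.

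\textbf{Step 2: identify $s_\alpha$ inside the principal square.} Since $s_\alpha\in S_{2\eta^*}$ and $s_\alpha<n-2=s_{l+1}$, we have $\alpha\le l$. Lemma~\ref{hooks_diagonal_and_sums}(\ref{hookdiagonale'}) then gives $s_\alpha=(n-2)-\eta_{l-(\alpha-1)}$. Equating the two expressions for $s_\alpha$ yields
\[
n-2+u=\eta_{l-(j-2)}+\eta_{l-(\alpha-1)} .
\]
\begin{itemize}
\item If $\alpha+1\ne j$, the right-hand side is a sum of two distinct parts of $\eta\vdash k$. It is therefore at most $k<n-2$, which is a contradiction.
\item If $\alpha+1=j$, then $2\eta_{l-(j-2)}=n-2+u\ge n-1=2k-2$. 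This forces $\eta_{l-(j-2)}\in\{k-1,k\}$, so $\eta=(1,k-1)$ with $u=1$, or $\eta=(k)$ with $u=3$.
\end{itemize}

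\textbf{Step 3: the degenerate cases (main obstacle).} This is where the hook-length arithmetic alone fails. For example, when $\eta=(k)$ one gets $h_{i,1}=2s_\alpha$, so the arithmetic by itself does not yield a contradiction. The contradiction therefore has to come from the \emph{shape}, namely from showing that the cell $c_{i,j}$ does not exist.

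Concretely, Lemma~\ref{rowsColsY} gives $\#C_j=\eta_{l-(j-2)}+(j-1)$, so the cell exists only if $i\le \eta_{l-(j-2)}+j-1$. On the other hand, $i$ is the index of $s_i=n-2+u$ in $S_{2\eta^*}$. That index can be computed from the complementary pairing of Lemma~\ref{hooks_diagonal_and_sums}(\ref{hooks_diagonal_and_sums2}): the elements of $S_{2\eta^*}$ above $n-2$ correspond bijectively to parts of $\l$ below $n-2$. Combined with the first-column lengths, this determines how many elements of $S_{2\eta^*}$ are smaller than $n-2+u$.

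I would carry out this count explicitly for $\eta=(k)$ and for $\eta=(1,k-1)$. The aim is to show that the required row index exceeds $\#C_j$, which contradicts the existence of the cell. For $\eta=(1,k-1)$, I would first try the reconstruction used in Lemma~\ref{salphai=j}: compare $h_{2,4}$ and $h_{2,5}$ along a row. This gives a contradiction of the form $k-1=k$.
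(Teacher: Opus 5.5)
Your Steps 1 and 2 are the paper's argument made explicit. The identity $n-2+u=\eta_{l-(j-2)}+\eta_{l-(\alpha-1)}$ is the paper's observation $h_{\alpha+1,j}=s_i$. Your case $\alpha+1\neq j$ is the contradiction of Lemma~\ref{salphaidiversoj}, and $\alpha+1=j$ is the third case of Lemma~\ref{salphai=j}, to which the paper defers.

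The weak point is Step 3. The case $\eta=(k)$ should not appear: by definition every partition in $\mathbb{D}_k$ has at least two parts, so $\eta_{l-(j-2)}\le k-1$. You cannot replace this exclusion with your shape argument, because that argument fails for $\eta=(k)$. There one has $\#C_2=k+1$, $s_1=k-3$, $s_2=n-2$ and $s_5=2k$. The cell $c_{5,2}$ exists, with $h_{5,2}=\lambda_{n-2}-s_1-s_5=k-3\in S_{2\eta^*}$ and $h_{5,1}=2k-6$. Since $5\le\#C_2$, no row-index count can give a contradiction. In fact the resulting $\lambda$ is refinable, because $3k-3=h_{2,1}=(k-3)+2k$. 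So $(k)$ must be discarded by hypothesis.

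For $\eta=(1,k-1)$ with $u=1$ (which needs $k\ge 5$, since $(1,3)=(3,k-3)$ when $k=4$), you leave only a plan, but no reconstruction is needed:
\begin{itemize}
\item $h_{3,3}=2\eta_1=2$ gives $s_2=2k-4$ by Lemma~\ref{hooks_diagonal_and_sums}.
\item Lemma~\ref{hookSymmetrySquare} then gives $h_{3,1}=h_{1,3}=\lambda_{n-2}-s_2=2k-2=n-1$.
\item Hence $n-1\in\lambda$, so it cannot equal $s_i$.
\end{itemize}
Citing the third case of Lemma~\ref{salphai=j}, as the paper does, also closes this case. With $(k)$ discarded by hypothesis and this short check, your proof is complete and coincides with the paper's.
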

\begin{proof}
By Lemma~\ref{block2symmetry}(\ref{uguaglianzafq}) we have
    \[
        h_{i,j} = h_{j,i+1} = \l_{n-2} - s_{j-1} - s_i.
    \]
    Since $s_i > n-2$, it follows that
    \[
        s_{\alpha} = h_{i,j} < n-2.
    \]
    Hence $1 \le \alpha \le l+1$, and $h_{\alpha+1,j} = s_i$, which reduces to a situation already treated in Lemma~\ref{salphaidiversoj} and the third case of Lemma~\ref{salphai=j}, leading to a contradiction.
\end{proof}

\begin{lemma}\label{lemmaboh2}
    Let $2 \le i \le l+1$ and $j = l+2$. Then the existence of such an $s_{\alpha}$ leads to a contradiction.
\end{lemma}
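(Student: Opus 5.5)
The plan is to mirror the treatment of the square in Lemma~\ref{salphaidiversoj}. The first step is to compute the hooks in column $l+2$ explicitly. By Lemma~\ref{block2symmetry}(\ref{diag2l+1'}), for $2\le i\le l+1$,
\[
s_\alpha = h_{i,l+2} = \eta_{l-(i-2)}.
\]
Since $\eta\vdash k$ and $n-2 = 2k-3 > k$ for $k\ge 4$, we get $s_\alpha < n-2$. Because $n-2\in S_{2\eta^*}$ (Corollary~\ref{n_minus_2_in_set}) and $s_\alpha\in S_{2\eta^*}$ is smaller, $s_\alpha$ lies among $s_1,\dots,s_l$. Hence $\alpha+1\le l+1$, and the diagonal hook $h_{\alpha+1,\alpha+1}$ exists and satisfies
\[
h_{\alpha+1,\alpha+1} = 2\eta_{l-(\alpha-1)} = \lambda_{n-2}-2s_\alpha > 0 .
\]

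Next, rewrite this relation with Lemma~\ref{hooks_diagonal_and_sums}(\ref{hookdiagonale'}), that is, $s_\alpha = (n-2)-\eta_{l-(\alpha-1)}$. Combined with $s_\alpha = \eta_{l-(i-2)}$, this gives
\[
n-2 = \eta_{l-(i-2)} + \eta_{l-(\alpha-1)}.
\]
The argument then splits into two cases.

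If $\alpha+1\neq i$, the two parts of $\eta$ on the right are distinct parts, so their sum is at most $k < n-2$, which is a contradiction.

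If $\alpha+1 = i$, then $n-2 = 2\eta_{l-(i-2)}$ is even. But $n-2 = 2k-3$ is odd, which is again a contradiction.

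The main thing to check carefully is the indexing step: one must confirm that the position of $s_\alpha$ in $S_{2\eta^*}$ puts it in the range $1\le\alpha\le l$. This should follow from the correspondence $s_{x-1} = (n-2)-\eta_{l-(x-2)}$ for $2\le x\le l+1$, together with $h_{1,l+2} = n-2$ (Lemma~\ref{block2symmetry}(\ref{colonnan-2l-2})). Those two facts show that the elements of $S_{2\eta^*}$ below $n-2$ are exactly $s_1,\dots,s_l$. Once that is secured, both cases are immediate parity and size contradictions. In particular, condition (\ref{item-ric3b}) of Proposition~\ref{riconoscimentounref} never needs to be invoked for this column.
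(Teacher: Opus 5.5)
Your argument is correct and follows the paper's route. You read off $s_\alpha=h_{i,l+2}=\eta_{l-(i-2)}<n-2=s_{l+1}$ to place $1\le\alpha\le l$, then compare with $h_{\alpha+1,\alpha+1}=\lambda_{n-2}-2s_\alpha$ to obtain $n-2=\eta_{l-(i-2)}+\eta_{l-(\alpha-1)}$. Your separate parity argument for $\alpha+1=i$ is actually needed, since the paper's bound $\eta_{l-(\alpha-1)}+\eta_{l-(i-2)}\le k$ silently assumes the two parts are distinct. One small correction to your closing remark: it should exclude the top cell $h_{1,l+2}=n-2$, which does rely on condition (b) through $h_{1,1}=2(n-2)$.
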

\begin{proof}
    By Lemma~\ref{block2symmetry}(\ref{diag2l+1'}), we obtain
    \[
        s_{\alpha} = h_{i,l+2} = \eta_{l-(i-2)} < k < n-2.
    \]
    Therefore,
    \[
        h_{\alpha+1,\alpha+1} = \l_{n-2} - 2 s_{\alpha} > 0,
    \]
which becomes    \[
        s_{\alpha} + \eta_{l-(\alpha-1)} = n-2.
    \]
    Simplifying, we get
    \[
        n-2 = \eta_{l-(\alpha-1)} + \eta_{l-(i-2)} \le k < n-2,
    \]
    which is a contradiction.
\end{proof}
	
	All the preceding lemmas collectively establish that any hook length $s_\alpha$ outside the first column of $Y_{{2\eta^*}}$ either coincides with the length of a hook in the first column or is exactly half of the hook length of the corresponding cell in the first column. Consequently, all possible positions of a hook outside the first column have been considered, and any assumption violating the criterion of Proposition~\ref{riconoscimentounref} leads to a contradiction. This allows us to summarize our analysis in the following proposition.

\begin{theorem}\label{lambdanraffinabile}
    Let $\eta\in\mathbb{D}_k\setminus\{(3,k-3)\}$, and let $\lambda = S^c_{2\eta^*}$ be the partition associated to the Young diagram $Y_{{2\eta^*}}$, where $S_{2\eta^*} = \KN^{-1}(Y_{2\eta^*})$. Then $\lambda$ is unrefinable. In particular $\l\in\overline{\mathcal{U}}_{T_n}$.
\end{theorem}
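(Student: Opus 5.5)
The plan is to combine the criterion of Proposition~\ref{riconoscimentounref}(\ref{item-ric3}) with the case analysis of Lemmas~\ref{salphaidiversoj}--\ref{lemmaboh2}, and then to verify maximality using Corollary~\ref{cor:struct_lambda} and Proposition~\ref{pesolambda}. First, partition the cells of $Y_{2\eta^*}$ outside the first column into regions. These are: the cells of the main square with $2\le i,j\le l+1$; the cells of column $l+2$ with $2\le i\le l+1$; the cells below the square, with $l+2\le i\le n-2$ and $1\le j\le l+1$; and the cells of the first row together with the cells to the right of column $l+2$. The first-row cells have hooks $h_{1,j}=\lambda_{n-2}-s_{j-1}$. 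By Lemma~\ref{hookSymmetrySquare}(\ref{C_1R_1q}) and Lemma~\ref{block2symmetry}(\ref{uguaglianzafq}), each of them equals a first-column hook, with the single exception of $h_{1,l+2}=n-2$. This exceptional cell satisfies condition (\ref{item-ric3b}), since $h_{1,1}=2n-4=2(n-2)$. The cells to the right of column $l+2$ in rows $j\le l+1$ are handled by Lemma~\ref{block2symmetry}(\ref{uguaglianzafq}), which identifies $h_{j,i+1}=h_{i,j}$, so they reduce to the region below the square.

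Second, suppose some hook $s_\alpha$ outside the first column is not a part of $\lambda$, so $s_\alpha\in S_{2\eta^*}$. The lemmas then apply region by region. Lemma~\ref{salphaidiversoj} excludes off-diagonal cells of the square. Lemma~\ref{lemmaboh2} excludes column $l+2$. Lemma~\ref{lemmaboh} excludes the region below the square and, by the symmetry just described, the region to its right. The only surviving case is a diagonal cell $h_{i,i}$. There Lemma~\ref{salphai=j} gives $h_{\alpha+1,1}=2h_{\alpha+1,\alpha+1}=2s_\alpha$. We must then check that the offending cell sits in the row whose first-column hook is $2s_\alpha$. In that proof the case $\alpha=i-1$ yields exactly $h_{i,1}=2s_\alpha$, so condition (\ref{item-ric3b}) holds. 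Since every hook thus satisfies (\ref{item-ric3a}) or (\ref{item-ric3b}), Proposition~\ref{riconoscimentounref} shows that $\lambda$ is unrefinable.

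Third, for membership in $\overline{\mathcal U}_{T_n}$: Proposition~\ref{pesolambda} gives $\lambda\vdash T_n$, and Corollary~\ref{cor:struct_lambda} gives $\lambda_t=2n-4$ and $\#\mathcal M_\lambda=n-2=\lfloor\lambda_t/2\rfloor$. Since $2n-4$ is the maximal largest part for unrefinable partitions of $T_n$ (Table~\ref{tab:accl-bound}), $\lambda\in\widetilde{\mathcal U}_{T_n}$, and the count of missing parts places it in $\overline{\mathcal U}_{T_n}$. Equivalently, $\lambda\neq\widetilde\pi_n$, because $\widetilde\pi_n$ has only $n-4$ missing parts.

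The main obstacle is making sure the region decomposition is exhaustive. Cells to the right of column $l+2$ in rows $2,\dots,l+1$ must be matched to cells below the square via Lemma~\ref{block2symmetry}(\ref{uguaglianzafq}), and I must check that every such column index $i+1$ with $l+2\le i\le n-2$ actually arises. The diagonal case also needs care: we must confirm that the row carrying the offending cell is indeed the row whose first-column hook is twice it, as condition (\ref{item-ric3b}) requires, rather than merely some row.
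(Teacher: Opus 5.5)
Your proposal is correct and follows the paper's own proof, which likewise just combines the criterion of Proposition~\ref{riconoscimentounref} with Lemmas~\ref{salphaidiversoj}--\ref{lemmaboh2}, Proposition~\ref{pesolambda} and Corollary~\ref{cor:struct_lambda}. Your explicit handling of the first row (including $h_{1,l+2}=n-2$ satisfying condition~(\ref{item-ric3b})), of the region right of column $l+2$, and of the row in which the diagonal case satisfies~(\ref{item-ric3b}) only makes explicit what the paper leaves implicit. One small slip: $\widetilde{\pi}_n$ has $n-3$ missing parts, not $n-4$; the remark is redundant anyway, since $\#\mathcal{M}_\lambda=n-2=\lfloor\lambda_t/2\rfloor$ already places $\lambda$ in $\overline{\mathcal{U}}_{T_n}$.
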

\begin{proof}
The proof  follows directly from the previous results. In particular: 
$\lambda$ is unrefinable by the above lemmas, 
satisfies the weight condition for $T_n$ by Proposition~\ref{pesolambda}, 
is maximal among the partitions of $T_n$ by Corollary~\ref{cor:struct_lambda}(\ref{massimapartelambda}), 
and finally satisfies the necessary conditions for belonging to 
$\overline{\mathcal{U}}_{T_n}$ according to Corollary~\ref{cor:struct_lambda}(\ref{partimancantilambda}).
\end{proof}

To conclude, observe that, for $k\ge 4$, the previous results immediately give that the number of maximal unrefinable partitions of $T_{2k-1}$ coincides with $\#\mathbb{D}_k$, which in turn equals $\#\mathbb{D}_{2k}^e$. 
This count is obtained by associating the partition $(3,k-3)\in\mathbb{D}_k$, which we excluded, with the partition $\wtp$, which was also excluded. 
The following example illustrates why the correspondence $\KN^{-1}$ behaves in a nonconforming way on the partition $(3,k-3)$.

\begin{example}\label{controesempio}
Let $k=8$, $n=2k-1= 15$. Consider $\eta = (3,k-3) = (3,5)$. Notice that the Young diagram $Y_{2\eta^*}$ does not respect the requirements of unrefinability of Proposition~\ref{riconoscimentounref}, how it can be deduced by the highlighted boxes in Fig.~\ref{fig:young-3k-3}. 
\end{example}
\begin{figure}
\begin{tikzpicture}
			\draw (0,0) rectangle (0.5, -0.5) rectangle (0,-1) rectangle (0.5, -1.5) rectangle (0,-2) rectangle (0.5,-2.5) rectangle (0,-3)
			rectangle (0.5, -3.5) rectangle (0,-4) rectangle (0.5, -4.5) rectangle (0,-5) rectangle (0.5, -5.5) rectangle (0,-6) rectangle 				(0.5, -6.5);
			\draw (1,0) rectangle (0.5, -0.5) rectangle (1,-1) rectangle (0.5, -1.5) rectangle (1,-2) rectangle (0.5,-2.5) rectangle (1,-3);
			\draw (1,0) rectangle (1.5, -0.5) rectangle (1,-1) rectangle (1.5, -1.5) rectangle (1,-2) rectangle (1.5,-2.5);
			\draw (2,0) rectangle (1.5, -0.5) rectangle (2,-1) rectangle (1.5,-1.5);
			\draw (2,0) rectangle (2.5, -0.5) rectangle (2,-1) rectangle (2.5,-1.5);
			\draw (3,0) rectangle (2.5, -0.5) rectangle (3,-1) rectangle (2.5,-1.5);
			\draw (3,0) rectangle (3.5, -0.5) rectangle (3,-1);
			\draw (3.5,-0.5) rectangle (4,0) rectangle (4.5,-0.5) rectangle (5,0) rectangle (5.5,-0.5) rectangle (6,0) rectangle (6.5,-0.5) rectangle (7,0) ;
			\node at (0.25,-0.25) {$26$};
			\node at (0.25,-0.75) {$18$};
			\node at (0.25,-1.25) {$16$};
			\node at (0.25,-1.75) {$12$};
			\node at (0.25,-2.25) {$11$};
			\node at (0.25,-2.75) {$9$};
			\node at (0.25,-3.25) {$7$};
			\node at (0.25,-3.75) {$6$};
			\node at (0.25,-4.25) {$5$};
			\node at (0.25,-4.75) {$4$};
			\node at (0.25,-5.25) {$3$};
			\node at (0.25,-5.75) {$2$};
			\node at (0.25,-6.25) {$1$};
			
			\node at (0.75,-0.25) {$18$};
			\node [color=purple]  at (0.75,-0.75) {$10$};
			\node at (0.75,-1.25) {$8$};
			\node at (0.75,-1.75) {$4$};
			\node at (0.75,-2.25) {$3$};
			\node at (0.75,-2.75) {$1$};
			
			\node at (1.25,-0.25) {$16$};
			\node [color=purple]  at (1.25,-0.75) {$8$};
			\node at (1.25,-1.25) {$6$};
			\node at (1.25,-1.75) {$2$};
			\node at (1.25,-2.25) {$1$};
			
			\node at (1.75,-0.25) {$13$};
			\node at (1.75,-0.75) {$5$};
			\node at (1.75,-1.25) {$3$};
			
			\node at (2.25,-0.25) {$12$};
			\node at (2.25,-0.75) {$4$};
			\node at (2.25,-1.25) {$2$};
			
			\node at (2.75,-0.25) {$11$};
			\node at (2.75,-0.75) {$3$};
			\node at (2.75,-1.25) {$1$};
			
			\node at (3.25,-0.25) {$9$};
			\node at (3.25,-0.75) {$1$};
			
			\node at (3.75,-0.25) {$7$};
			\node at (4.25,-0.25) {$6$};
			\node at (4.75,-0.25) {$5$};
			\node at (5.25,-0.25) {$4$};
			\node at (5.75,-0.25) {$3$};
			\node at (6.25,-0.25) {$2$};
			\node at (6.75,-0.25) {$1$};
			
		\end{tikzpicture}
	\caption{Young diagram for the partition $(3,k-3)$ illustrating the nonconforming behavior of $\KN^{-1}$.}
    \label{fig:young-3k-3}
\end{figure}

We are now ready to turn to the nontriangular case. In this setting, addressed in the remaining sections of this work, the structure of the unrefinable partitions of $T_{n,d}$ naturally splits into two distinct families, according to the value of the maximal part of the partition, namely $2n-5$, when $n-d$ is even, or $2n-4$, when $n-d$ is odd. In Sec.~\ref{sec:nontri5} we deal with the case of $T_{n,n-2k}$ and in Sec.~\ref{sec:nontri4} we address the case of $T_{n,n-2k+1}$.

\section{Nontriangular weight with maximal part $2n-5$}\label{sec:nontri5}

In this section, we aim to prove the bijection 
\(\mup{T_{n,n-2k}} \leftrightarrow \mathbb{D}^{\,\text{o}}_{2k+2}\), where $2\leq 2k\leq n-4$.  
We establish this correspondence by constructing again a double embedding, presented in the two subsections that follow.
As in the triangular case, we must first remove from $\widetilde{\mathcal{U}}_{T_{n,n-2k}}$ the unique partition that does not attain the maximal number of missing parts, namely the exceptional partition $\widetilde{\tau}_n$ listed in Tab.~\ref{tab:exceptions} (existing only when $2k= n-4$, or $d=4$).  

\subsection{Proof of \(\mup{T_{n,n-2k}} \hookrightarrow  \mathbb{D}^{\,\text{o}}_{2k+2}\)}
Let $\lambda = (\lambda_1, \dots, \lambda_t) \in \overline{\mathcal{U}}_{T_{n,n-2k}} = \mup{T_{n,n-2k}} \setminus \{\widetilde{\tau}_n\}$  and $2\leq 2k\leq n-4$, and let $S_\lambda \in \NS$ be the numerical set such that $S_\lambda^c = \lambda$. We consider the Young diagram
\[
Y_{S_\lambda} = \KN(S_\lambda),
\]
constructed from $S_\lambda$ via the Keith-Nath procedure, as before.

We first establish a general structural property: when the maximal part is odd, the set of parts of an unrefinable partition is completely determined by its missing parts, via a perfect complementarity relation. 
\begin{lemma}\label{lem:sym}
Let $\lambda=(\lambda_1,\dots,\lambda_t)\in\overline{\mathcal{U}}$ and suppose that $\lambda_t$ is odd. Then
\[
\lambda_i\in\lambda \quad\Longleftrightarrow\quad \lambda_t-\lambda_i\notin\lambda.
\]
\end{lemma}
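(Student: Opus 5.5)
The plan is a pigeonhole argument on complementary pairs, using only the definition of unrefinability and the defining property of $\overline{\mathcal{U}}$. Write $\lambda_t = 2r+1$, so that $\lfloor \lambda_t/2\rfloor = r$. Since $\lambda_t$ is odd, the set $\{1,2,\dots,\lambda_t-1\}$ splits into exactly $r$ disjoint pairs
\[
P_x \deq \{x,\ \lambda_t - x\}, \qquad 1 \le x \le r.
\]
Each pair has two \emph{distinct} elements, because $x = \lambda_t - x$ would force $\lambda_t$ to be even. The statement to prove is that every such pair contains exactly one part of $\lambda$ and exactly one missing part. Applied to $x=\lambda_i<\lambda_t$, this gives the claimed equivalence.

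First I show that no pair consists entirely of missing parts. Suppose both $x$ and $\lambda_t-x$ belong to $\mathcal{M}_\lambda$. They are distinct, say $\mu_i<\mu_j$, and $\mu_i+\mu_j=\lambda_t$ is a part of $\lambda$. By Definition~\ref{def_unref} this means $\lambda$ is refinable, a contradiction. Hence each $P_x$ contains at most one element of $\mathcal{M}_\lambda$.

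Next I count. Every missing part lies in $\{1,\dots,\lambda_t-1\}$, and the pairs $P_x$ partition this set. So $\#\mathcal{M}_\lambda$ is at most the number of pairs, namely $r$. Since $\lambda\in\overline{\mathcal{U}}$, we have $\#\mathcal{M}_\lambda = r$. Therefore every pair contains exactly one missing part, and its other element is a part of $\lambda$.

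This is precisely the statement: $x\in\lambda$ if and only if $\lambda_t-x\notin\lambda$, for all $1\le x\le \lambda_t-1$. There is no real obstacle here. The only points needing care are that oddness of $\lambda_t$ removes the exceptional middle element $\lambda_t/2$ from Remark~\ref{rmk:propbarU} and makes the two summands distinct, which is what the refinability condition $i<j$ requires. One should also note that $\lambda_t$ itself lies outside all pairs, so the equivalence is meant for parts $\lambda_i<\lambda_t$.
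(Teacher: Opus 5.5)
Your proof is correct and is essentially the paper's argument: the paper also uses unrefinability to show that $\mu\mapsto\lambda_t-\mu$ sends each missing part to a part. It then gets bijectivity because $\#\mathcal{M}_\lambda=\lfloor\lambda_t/2\rfloor$ equals the number of parts below $\lambda_t$, which is exactly your pigeonhole count over the pairs $\{x,\lambda_t-x\}$ (and you state explicitly, where the paper leaves it implicit, that oddness of $\lambda_t$ makes the two summands distinct, as the condition $i<j$ requires).
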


\begin{proof}
Write $\lambda_t=2q-1$. Since $\lambda\in\overline{\mathcal{U}}$, we have
\[
\#\mathcal{M}_\lambda=\left\lfloor\frac{\lambda_t}{2}\right\rfloor=q-1,
\qquad
|\lambda|=q.
\]

Recall that for every missing part $\mu\in\mathcal{M}_\lambda$ there must exist a part $\lambda_i\in\lambda$ such that
$
\lambda_i+\mu=\lambda_t,
$
otherwise $\lambda$ would be refinable.
Moreover,
$
\#\{\lambda_i\in\lambda \mid \lambda_i<\lambda_t\}
=\#\mathcal{M}_\lambda,
$
so the pairing between present parts and missing parts is bijective.  
This yields the desired equivalence.
\end{proof}

Before stating the main structural facts, let us comment on their geometric interpretation.
Just as in the triangular case, the behaviour of the entries in the first row and the first column
governs the entire Young diagram.  
Here the key phenomenon is that, when the maximal part is $2n-5$, the diagram becomes
\emph{symmetric}: the first row and the first column contain the same number of cells and,
moreover, the hook-lengths computed along them match perfectly.
This symmetry propagates to the whole diagram, forcing the identity
\[
h_{i,j}=h_{j,i},
\]
which is the geometric counterpart of the complementarity relations satisfied by the parts of~$\lambda$.

\begin{lemma}\label{lem:nontriangLR}
Using the above notation, the following claims hold:
\begin{enumerate}
    \item\label{C1:nontriang}
    the first column of $Y_{S_\lambda}$ contains $n-2$ cells, and the hook length of the cell in the $i$-th row of this column is
    \[
    h_{i,1} = \lambda_{\,n-2-(i-1)};
    \]

    \item\label{R1:nontriang}
    the first row of $Y_{S_\lambda}$ contains $n-2$ cells, and the hook length of the cell in the $i$-th column of this row is
    \[
    h_{1,i} = \lambda_{\,n-2}-s_{i-1}.
    \]
\end{enumerate}
\end{lemma}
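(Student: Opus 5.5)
The plan mirrors the proof of Lemma~\ref{lemLR}, now in the setting $\lambda\in\overline{\mathcal{U}}_{T_{n,n-2k}}$. Here $\lambda_t=2n-5$ is odd, so by the definition of $\overline{\mathcal{U}}$ we have $\#\mathcal{M}_\lambda=\lfloor (2n-5)/2\rfloor=n-3$. Since every integer in $\{1,\dots,2n-5\}$ is either a part or a missing part, the number of parts is
\[
|\lambda|=(2n-5)-(n-3)=n-2.
\]
Thus $t=n-2$, and the largest part is $\lambda_{n-2}=2n-5$. (This matches the count $|\lambda|=q$ with $2q-1=2n-5$ used in the proof of Lemma~\ref{lem:sym}.)

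For item~(\ref{C1:nontriang}), we apply Proposition~\ref{riconoscimentounref}(\ref{item-ric1}). The hook length of the cell in the $i$-th row of the first column is the $i$-th largest gap of $S_\lambda$, i.e.\ the $i$-th largest part of $\lambda$. The first column therefore has $\#C_1=|\lambda|=n-2$ cells, and
\[
h_{i,1}=\lambda_{t-i+1}=\lambda_{n-2-(i-1)}.
\]

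For item~(\ref{R1:nontriang}), recall from the KN construction that the columns of $Y_{S_\lambda}$ correspond to the east steps, namely the elements of $S_\lambda$ smaller than $F(S_\lambda)=\lambda_{n-2}=2n-5$. These elements are exactly $\{0\}\cup\mathcal{M}_\lambda$, so
\[
\#R_1=\#\{s\in S_\lambda\mid s<2n-5\}=1+\#\mathcal{M}_\lambda=1+(n-3)=n-2.
\]
The hook value then follows from Proposition~\ref{riconoscimentounref}(\ref{item-ric2}), or equivalently Proposition~\ref{riconoscimentoNS}(\ref{riconoscimentoNS2}), which give $h_{1,i}=F(S_\lambda)-s_{i-1}=\lambda_{n-2}-s_{i-1}$.

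There is no real obstacle. The only point needing care is the count of missing parts: it relies on $\lambda\in\overline{\mathcal{U}}$, which excludes $\widetilde\tau_n$, together with the oddness of $\lambda_t$ making the floor equal to $n-3$. This is exactly why the first row now has $n-2$ cells instead of the $n-1$ found in the triangular case.
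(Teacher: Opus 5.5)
Your proof is correct and takes the same route as the paper, which simply refers back to the argument of Lemma~\ref{lemLR}: Proposition~\ref{riconoscimentounref}(\ref{item-ric1}) and (\ref{item-ric2}), together with counting the elements of $S_\lambda$ below $F(S_\lambda)$. You also make explicit the one ingredient that changes from the triangular case, namely $\#\mathcal{M}_\lambda=\lfloor (2n-5)/2\rfloor=n-3$ and hence $|\lambda|=n-2$, which the paper leaves implicit.
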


\begin{proof}
The statements follow directly from the proofs of Lemma~\ref{lemLR}.
\end{proof}

\begin{lemma}\label{lem:simmetriaR1C1-nontriang}
For every $1 \le i \le n-2$, we have
\[
h_{i,1}=h_{1,i}.
\]
\end{lemma}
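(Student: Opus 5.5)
By Lemma~\ref{lem:nontriangLR} we have $h_{i,1}=\lambda_{n-2-(i-1)}$ and $h_{1,i}=\lambda_{n-2}-s_{i-1}$ for $1\le i\le n-2$. The statement is therefore equivalent to the identity
\[
\lambda_{n-1-i}=\lambda_{n-2}-s_{i-1}, \qquad 1\le i\le n-2,
\]
where $s_0=0<s_1<\dots<s_{n-3}$ are the elements of $S_\lambda$ smaller than $\lambda_{n-2}=2n-5$. These elements are exactly $\{0\}\cup\mathcal{M}_\lambda$, and $\#\mathcal{M}_\lambda=\lfloor(2n-5)/2\rfloor=n-3$. The plan is to show that the map $x\mapsto \lambda_{n-2}-x$ is an order-reversing bijection from $\{0\}\cup\mathcal{M}_\lambda$ onto the set of parts of $\lambda$.

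The main tool is Lemma~\ref{lem:sym}, which applies because $\lambda\in\overline{\mathcal{U}}$ and $\lambda_t=2n-5$ is odd. For $1\le x\le 2n-6$ it gives that $x\in\lambda$ if and only if $2n-5-x\notin\lambda$. So $x\mapsto 2n-5-x$ sends $\mathcal{M}_\lambda$ into $\{\lambda_1,\dots,\lambda_{n-3}\}$, and it sends $0$ to $\lambda_{n-2}$. Because $2n-5$ is odd, there is no fixed point $x=\lambda_t/2$, so no exceptional case arises as in the triangular setting. This is also why the diagram is fully symmetric, with no extra column.

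To check that this is a bijection, count both sides. Each has $n-2$ elements, since $|\lambda|=n-2$ by Lemma~\ref{lem:nontriangLR}(\ref{C1:nontriang}) and $\#(\{0\}\cup\mathcal{M}_\lambda)=n-2$. The map $x\mapsto\lambda_{n-2}-x$ is injective and reverses order. Hence the $i$-th smallest element $s_{i-1}$ is sent to the $i$-th largest part $\lambda_{n-2-(i-1)}$, which is exactly the displayed identity. This gives $h_{1,i}=h_{i,1}$ for every $1\le i\le n-2$.

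I expect the only delicate point to be bookkeeping rather than a real obstacle: one must confirm that the first row has exactly $n-2$ cells, so that the index ranges of the two sides agree. This holds because $\#R_1=\#(\{0\}\cup\mathcal{M}_\lambda)=n-2$. One must also confirm that Lemma~\ref{lem:sym} covers every $x$ strictly between $0$ and $\lambda_t$; it does, since no $x$ equals $\lambda_t/2$ when $\lambda_t$ is odd.
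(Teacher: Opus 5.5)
Your argument is correct and is essentially the paper's proof. The paper also uses Lemma~\ref{lem:sym} to place each first-row hook $\lambda_{n-2}-s_{i-1}$ among the first-column hooks, and then concludes from $\#C_1=\#R_1=n-2$ and the strict monotonicity of both hook sequences that they coincide termwise; your order-reversing bijection $x\mapsto\lambda_{n-2}-x$ from $\{0\}\cup\mathcal{M}_\lambda$ onto the parts is that same counting argument phrased on sets rather than on hook sequences.
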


\begin{proof}
For $i=1$ the statement is trivial.  
For $i>2$, since
\[
h_{1,i} = \lambda_{n-2} - s_{i-1},
\]
this value equals a part of $\lambda$ (see Lemma~\ref{lem:sym}), hence appears among the hook lengths in the first column.  
Since by Lemma~\ref{lem:nontriangLR} we have $\#C_1=\#R_1=n-2$, and both sequences
$(h_{i,1})_i$ and $(h_{1,i})_i$ are strictly decreasing, they must coincide termwise.  
Thus,
\[
h_{i,1}=h_{1,i}.\qedhere
\]
\end{proof}

\begin{corollary}\label{cor:simmetriaToda}
For all $1\le i,j\le n-2$, we have
\[
h_{i,j} = h_{j,i}.
\]
\end{corollary}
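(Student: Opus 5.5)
The natural approach is to combine Lemma~\ref{hookinterno} with Lemma~\ref{lem:simmetriaR1C1-nontriang}, exactly as in the proof of the hook symmetry inside the principal square in the triangular case (Lemma~\ref{hooksymmetryglobal}). There are two things to check: the statement concerns pairs $(i,j)$ for which the cell $c_{i,j}$ actually lies in the diagram, and the symmetry of hooks should be read as a statement about existing cells. So the first step is to prove that the diagram $Y_{S_\lambda}$ is self-conjugate as a shape, that is, $\#R_i=\#C_i$ for every $1\le i\le n-2$.

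For this I would use the explicit formulas for the hooks on the first row and column. As computed in the proof of Lemma~\ref{hookinterno}, with $\#C_1=\#R_1=n-2$ (Lemma~\ref{lem:nontriangLR}) the leg of $c_{i,1}$ is $n-2-i$ and the arm of $c_{1,i}$ is $n-2-i$. Hence
\[
h_{i,1}=\#R_i+(n-2-i),\qquad h_{1,i}=\#C_i+(n-2-i).
\]
Lemma~\ref{lem:simmetriaR1C1-nontriang} then gives $\#R_i=\#C_i$ directly. This is cleaner than the triangular case, where the extra column produced the shift $\#R_i=\#C_i+1$. It follows that $c_{i,j}\in Y_{S_\lambda}$ if and only if $j\le \#R_i=\#C_i$, if and only if $c_{j,i}\in Y_{S_\lambda}$. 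So both sides of the claimed identity are defined simultaneously.

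For the hooks themselves, I would treat the first row and column separately: for $i=1$ or $j=1$ the claim is exactly Lemma~\ref{lem:simmetriaR1C1-nontriang}. For $i,j\ge 2$, Lemma~\ref{hookinterno} gives
\[
h_{i,j}=h_{1,j}+h_{i,1}-h_{1,1}=h_{j,1}+h_{1,i}-h_{1,1}=h_{j,i},
\]
where the middle equality applies Lemma~\ref{lem:simmetriaR1C1-nontriang} twice and the last is Lemma~\ref{hookinterno} again with the roles of $i$ and $j$ exchanged. Alternatively, one can write $h_{i,j}=\#R_i-j+\#C_j-i+1$ and swap $\#R$ with $\#C$ using the shape symmetry, as in Lemma~\ref{hooksymmetryglobal}(1).

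The main obstacle is not computational but interpretive. Lemma~\ref{hookinterno} is formally stated for arbitrary indices, yet it is only meaningful for cells inside the diagram, and its proof implicitly uses the lengths of the first row and column. So I would make sure to invoke the shape symmetry $\#R_i=\#C_i$ before applying it. I would also check that Lemma~\ref{lem:simmetriaR1C1-nontriang} covers the index $i=2$: its proof as written says ``$i>2$'', but the argument works for all $i\ge 2$.
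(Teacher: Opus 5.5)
Your proposal is correct and is essentially the paper's proof: the paper also combines the identity $h_{i,j}=h_{i,1}+h_{1,j}-h_{1,1}$ with Lemma~\ref{lem:simmetriaR1C1-nontriang} to obtain $h_{i,j}=h_{1,i}+h_{j,1}-h_{1,1}=h_{j,i}$. Your preliminary derivation of $\#R_i=\#C_i$ from the first-row and first-column hook formulas is a worthwhile addition, since the paper only asserts this self-conjugacy afterwards, and it makes explicit that $c_{i,j}$ and $c_{j,i}$ lie in the diagram simultaneously.
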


\begin{proof}
Using the standard hook-length identity (Eq.~\eqref{equazionehookinterno}),
\[
h_{i,j}=h_{i,1}+h_{1,j}-h_{1,1},
\]
and applying Lemma~\ref{lem:simmetriaR1C1-nontriang}, we obtain
\[
h_{i,j}
= h_{1,i}+h_{j,1}-h_{1,1}
= h_{j,i}.\qedhere
\]
\end{proof}

The symmetry properties established in the previous lemmas imply that the Young
diagram associated with any $\lambda\in\overline{\mathcal{U}}_{T_{n,n-2k}}$ is perfectly
reflected across its main diagonal. In other words, $Y_{S_\lambda}$ is
\emph{self-conjugate}: the number of cells in the $i$-th row equals the number
of cells in the $i$-th column, and every hook length satisfies
$h_{i,j}=h_{j,i}$. Equivalently, the diagram coincides with its transpose, so
that its entire shape is determined by its first row (or, dually, its first
column). This is particularly relevant because self-conjugate Young diagrams are in
one-to-one correspondence with partitions into distinct odd parts~\cite{andrews1998theory}. Therefore,
the self-conjugacy of $Y_{S_\lambda}$ establishes a direct link between maximal
unrefinable partitions in $\overline{\mathcal{U}}_{T_{n,n-2k}}$ and this classical
family of integer partitions.\\

Now we can count the number of cells of $Y_{S_\lambda}$.

\begin{proposition}
    Let $\lambda \in \overline{\mathcal{U}}_{T_{n,n-2k}}$. Then the total number of cells in $Y_{S_\lambda}$ is 
    \[
    \#Y_{S_\lambda} = 2n - 3 + 2k.
    \]
    Consequently, the sum of the hook lengths along the main diagonal is $2n - 3 + 2k$.
\end{proposition}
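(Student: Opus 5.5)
We follow the same row-by-row count used in the triangular case. By Lemma~\ref{lem:nontriangLR}(\ref{C1:nontriang}) the first column has $n-2$ cells. The cell in row $i$ of that column has leg $\#C_1-i=n-2-i$. Hence $\#R_i=h_{i,1}-(n-2-i)$, and summing over $1\le i\le n-2$ gives
\[
\#Y_{S_\lambda}=\sum_{i=1}^{n-2}h_{i,1}-\sum_{i=1}^{n-2}(n-2-i)=|\lambda|_{\mathrm{wt}}-T_{n-3},
\]
where $|\lambda|_{\mathrm{wt}}=\sum_i h_{i,1}=\sum_i\lambda_i=T_{n,n-2k}=T_n-n+2k$ by Proposition~\ref{riconoscimentounref}(\ref{item-ric1}).

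Substituting, we get
\[
\#Y_{S_\lambda}=\frac{n(n+1)}{2}-n+2k-\frac{(n-3)(n-2)}{2}.
\]
Since $\frac{n(n+1)-(n-3)(n-2)}{2}=\frac{6n-6}{2}=3n-3$, this simplifies to $3n-3-n+2k=2n-3+2k$, as claimed. The only care needed is the index bookkeeping: the leg of $c_{i,1}$ is $n-2-i$, not $n-1-i$, because the first column here has $n-2$ cells.

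For the consequence about the diagonal, we use self-conjugacy (Corollary~\ref{cor:simmetriaToda}). Decompose $Y_{S_\lambda}$ into diagonal hooks: for each diagonal cell $c_{i,i}$, take the cell itself, its arm and its leg. In any Young diagram these hooks partition the set of cells, since each cell $c_{p,q}$ lies in the hook of $c_{m,m}$ with $m=\min(p,q)$. Therefore $\sum_i h_{i,i}=\#Y_{S_\lambda}=2n-3+2k$.

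Symmetry is not actually needed for this last step. It does, however, ensure that each diagonal hook length $h_{i,i}=2a(c_{i,i})+1$ is odd and that these values are strictly decreasing. This is precisely what will produce the partition of $2n-3+2k$ into distinct odd parts used afterwards.

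I expect no real obstacle. The step needing most attention is the arithmetic of the weight $T_{n,d}$ with $d=n-2k$, together with correctly justifying the diagonal-hook decomposition of the cells.
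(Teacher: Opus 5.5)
Your proof is correct and follows the paper's own argument: the paper likewise writes $\#R_i=h_{i,1}-(\#C_1-i)$, sums over the $n-2$ rows, and uses $\sum_i h_{i,1}=T_{n,n-2k}$, and your $T_{n-3}$ is exactly its $(n-2)^2-T_{n-2}$. Your diagonal-hook decomposition supplies the step the paper leaves implicit in ``consequently''; one small correction to your side remark is that strict decrease of the diagonal hooks holds in every Young diagram, so self-conjugacy is needed only for their oddness.
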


\begin{proof}
    The total number of cells in the diagram can be computed as the sum of the number of cells in each row:
    \[
    \begin{aligned}
        \#Y_{S_\lambda} &= \sum_{i=1}^{n-2} \#R_i \\
        &= \sum_{i=1}^{n-2} h_{i,1} - (\#C_1 - i) \\
        &= \sum_{i=1}^{n-2} h_{i,1} - \sum_{i=1}^{n-2} \#C_1 + \sum_{i=1}^{n-2} i \\
        &= T_{n,n-2k} - (n-2)(n-2) + T_{n-2} \\
        &= \frac{n(n+1)}{2} - (n-2k) - (n-2)^2 + \frac{(n-2)(n-1)}{2} \\
        &= \frac{1}{2}(4n - 6 + 4k) \\
        &= 2n - 3 + 2k.
    \end{aligned}
    \]
\end{proof}

\begin{remark}
The results above can be interpreted as providing an embedding of the set $\overline{\mathcal{U}}_{T_{n,n-2k}}$ into the set of self-conjugate Young diagrams, i.e. into $\mathbb{D}^{\, \text{o}}_{2k+2}$. This is because every partition $\l$ corresponds via the $\KN$-transformation  to self-conjugate partitions of weight $2n-3+2k$. Such partitions share the same maximal part, namely $2n-5$, that we can remove to obtain a bijective correspondence with $\mathbb{D}^{\, \text{o}}_{2k+2}$. 
From this perspective, the contribution of each part can be read directly along the main diagonal of $Y_{S_\lambda}$.
\end{remark}
We conclude with an example where all the symmetry properties of the Young diagram discussed above are clearly visible.

\begin{example}
		We consider the partition $\l=(1,2,3,4,5,6,7,8,9,12,14,15,25)\in \overline{\mathcal U}_{T_{15,9}}$, i.e.\ obtained for $n=15$ and $k=3$. The resulting Young diagram $Y_{\l}$ is displayed in Fig.~\ref{fig:youngsym}. Notice the symmetry highlighted by green boxes, and the corresponding distinct-odd-part partition $(3,5)$ of $2k+2=8$ in the red boxes.

	\begin{figure}
\begin{tikzpicture}
			\draw [color=black, fill=ForestGreen!30](0,0) rectangle (0.5, -0.5) rectangle (0,-1) rectangle (0.5, -1.5) rectangle (0,-2) rectangle (0.5,-2.5) rectangle (0,-3)
			rectangle (0.5, -3.5) rectangle (0,-4) rectangle (0.5, -4.5) rectangle (0,-5) rectangle (0.5, -5.5) rectangle (0,-6) rectangle 				(0.5, -6.5);
			\draw [color=black, fill=ForestGreen!30](1,0) rectangle (0.5, -0.5) rectangle (1,-1) rectangle (0.5, -1.5) rectangle (1,-2);
			\draw [color=black, fill=ForestGreen!30](1,0) rectangle (1.5, -0.5) rectangle (1,-1) rectangle (1.5, -1.5) rectangle (1,-2);
			\draw [color=black, fill=ForestGreen!30](2,0) rectangle (1.5, -0.5) rectangle (2,-1) rectangle (1.5,-1.5);
			\draw [color=black, fill=ForestGreen!30](2,0) rectangle (2.5,-0.5) rectangle (3,0) rectangle (3.5,-0.5) rectangle (4,0) rectangle (					4.5,-0.5) rectangle (5,0) rectangle (5.5,-0.5) rectangle (6,0) rectangle (6.5,-0.5);
			\draw [color=black, fill=red!30] (0,0) rectangle (0.5, -0.5);
			\draw [color=purple, fill=red!30, line width=1.1pt] (0.5, -0.5)rectangle (1,-1) rectangle (1.5,-1.5);
			\node at (0.25,-0.25) {$25$};
			\node at (0.25,-0.75) {$15$};
			\node at (0.25,-1.25) {$14$};
			\node at (0.25,-1.75) {$12$};
			\node at (0.25,-2.25) {$9$};
			\node at (0.25,-2.75) {$8$};
			\node at (0.25,-3.25) {$7$};
			\node at (0.25,-3.75) {$6$};
			\node at (0.25,-4.25) {$5$};
			\node at (0.25,-4.75) {$4$};
			\node at (0.25,-5.25) {$3$};
			\node at (0.25,-5.75) {$2$};
			\node at (0.25,-6.25) {$1$};
			
			\node at (0.75,-0.25) {$15$};
			\node at (0.75,-0.75) {$5$};
			\node at (0.75,-1.25) {$4$};
			\node at (0.75,-1.75) {$2$};
			
			\node at (1.25,-0.25) {$14$};
			\node at (1.25,-0.75) {$4$};
			\node at (1.25,-1.25) {$3$};
			\node at (1.25,-1.75) {$1$};
			
			\node at (1.75,-0.25) {$12$};
			\node at (1.75,-0.75) {$2$};
			\node at (1.75,-1.25) {$1$};
			
			\node at (2.25,-0.25) {$9$};
			\node at (2.75,-0.25) {$8$};
			\node at (3.25,-0.25) {$7$};
			\node at (3.75,-0.25) {$6$};
			\node at (4.25,-0.25) {$5$};
			\node at (4.75,-0.25) {$4$};
			\node at (5.25,-0.25) {$3$};
			\node at (5.75,-0.25) {$2$};
			\node at (6.25,-0.25) {$1$};	
		\end{tikzpicture}
	\caption{Young diagram of a maximal unrefinable partition illustrating
the main diagonal and its symmetries.}
    \label{fig:youngsym}
\end{figure}

	\end{example}
In the following section, we  complete the proof of the bijection, i.e.\ we show how each element of $\overline{\mathcal{U}}_{T_{n,n-2k}}$ can be uniquely associated with a partition in $\mathbb{D}^{\, \text{o}}_{2k+2}$, except for the partition $(1,2k+1)$ (which is ruled out only when the parameters satisfy $2k=n-4$).
Since 

\[
\#\mup{T_{n,n-2k}} =
\begin{cases}
\#\overline{\mathcal{U}}_{T_{n,n-2k}} + 1 & \text{if } 2k = n-4,\\[6pt]
\#\overline{\mathcal{U}}_{T_{n,n-2k}} & \text{otherwise},
\end{cases}
\]
this discrepancy accounts exactly for the exceptional partition $\widetilde{{\tau}}_n$ in Tab.~\ref{tab:exceptions}, yielding the desired correspondence.

In this sense, it will be useful to define

\[
\overline{\mathbb{D}}^{\, \text{o}}_{2k+2} \deq
\begin{cases}
{\mathbb{D}}^{\, \text{o}}_{2k+2}\setminus\{(1,2k-1)\} & \text{if } 2k = n-4,\\[6pt]
{\mathbb{D}}^{\, \text{o}}_{2k+2} & \text{otherwise}.
\end{cases}
\] 

\subsection{Proof of \( \mathbb{D}^{\text{o }}_{2k+2} \hookrightarrow \mup{T_{n,n-2k}} \)}

Let $\eta=(\eta_1,\cdots,\eta_l)\in\dbar$, and let us define $\eta^*=(\eta_1,\cdots,\eta_l,2n-5)$, where $2 \leq 2k \leq n-4$.

We construct the Young diagram $Y_{\eta^*}$ by imposing the following conditions:
\begin{align}
h_{1,1} &= 2n - 5, \nonumber \\
h_{i,i} &= \eta_{\,l-(i-2)}, \qquad 2 \le i \le l+1, \nonumber \\
a(c_{i,i}) &= l(c_{i,i}), \qquad 1 \le i \le l+1. \label{2n-5equazionecostruzione'}
\end{align}

From the Young diagram $Y_{\eta^*}$ constructed above, in the following we will apply the inverse of the $\KN$ transformation, obtaining a numerical set 
$
S_{\eta^*} = \KN^{-1}(Y_{\eta^*})
$.
We will define the associated partition ${\lambda} = S_{\eta^*}^c$, which we will show to be unrefinable.

By construction, the Young diagram $Y_{\eta^*}$ associated to $\eta \in \dbar$ is {self-conjugate}, meaning that the hook lengths are symmetric with respect to the main diagonal. That is, for every pair of indices $i,j$ we have
\[
h_{i,j} = h_{j,i}.
\]

\begin{lemma}\label{2n-5C1R1}
Let $\eta \in \dbar$. Then the first column of $Y_{\eta^*}$ contains 
$
\#C_1 = n-2
$ 
cells, and by self-conjugacy, the first row also contains 
$
\#R_1 = n-2
$ 
cells.
\end{lemma}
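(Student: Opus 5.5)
The plan mirrors the proof of Lemma~\ref{rowsColsY}(\ref{C_i'}), adapted to the self-conjugate construction in Eq.~\eqref{2n-5equazionecostruzione'}. The number of cells in the first column is determined by the leg of the corner cell $c_{1,1}$, namely $\#C_1 = l(c_{1,1})+1$. So it suffices to compute $l(c_{1,1})$ from the two conditions imposed on $c_{1,1}$: $h_{1,1}=2n-5$ and $a(c_{1,1})=l(c_{1,1})$.

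Explicitly, by the definition of the hook length,
\[
2n-5 = h_{1,1} = 1 + a(c_{1,1}) + l(c_{1,1}) = 1 + 2\,l(c_{1,1}),
\]
so $l(c_{1,1}) = n-3$ and hence $\#C_1 = n-2$. Similarly, $\#R_1 = 1 + a(c_{1,1}) = 1 + l(c_{1,1}) = n-2$. This is consistent with the general fact that a self-conjugate diagram has equal first row and first column. The hook length $2n-5$ is odd, which is exactly what makes the condition $a=l$ solvable; this contrasts with the triangular case, where the even value $2n-4$ forced $a=l+1$.

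A small point deserves attention: that the diagram $Y_{\eta^*}$ is well defined, i.e.\ that the prescribed diagonal hooks are compatible with a genuine Young diagram whose first row and column are longer than the inner arms and legs. I would check this briefly. For $2\le i\le l+1$, the conditions give $a(c_{i,i})=l(c_{i,i})=(\eta_{l-(i-2)}-1)/2$, an integer because the parts of $\eta$ are odd. These values strictly decrease along the diagonal since the $\eta$'s are distinct and indexed in decreasing order. Moreover, $\eta_l\le 2k+2\le n-2<2n-5$, so the diagonal hooks are strictly decreasing. This is the standard Frobenius-coordinate description of a self-conjugate partition, so the diagram exists and the computation above is legitimate. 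No step is a real obstacle; the only thing to watch is this well-definedness, which follows from the oddness and distinctness of the parts of $\eta$.
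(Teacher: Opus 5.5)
Your proposal is correct and follows essentially the same route as the paper: you solve $h_{1,1}=1+2\,l(c_{1,1})$ for the leg and read off $\#C_1=\#R_1=n-2$. Where the paper invokes self-conjugacy for the first row, you use $a(c_{1,1})=l(c_{1,1})$ directly, which amounts to the same thing. Your intermediate value $l(c_{1,1})=n-3$ is right, whereas the paper's display writes $l(c_{1,1})+1=\frac{h_{1,1}-1}{2}$; this should read $\frac{h_{1,1}+1}{2}$, and your computation avoids that slip.
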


\begin{proof}
For the first column, we have
\[
\#C_1 = l(c_{1,1}) + 1 = \frac{h_{1,1}-1}{2} = n-2.
\]
Since $Y_{\eta^*}$ is self-conjugate, the number of cells in the first row coincides with the first column:
\[
\#R_1 = \#C_1 = n-2.\qedhere
\]
\end{proof}

The next corollary provides a geometric interpretation of the partition $\l$ in terms of its associated Young diagram $Y_{\eta^*}$. Specifically, the total number of parts corresponds to the number of cells in the first column, the largest part is given by the top-left hook length, and the number of missing parts reflects the first-row structure, highlighting the diagram's self-conjugate symmetry. We use again that, by Proposition~\ref{riconoscimentoNS}, we have for all $i$:
	\begin{itemize}
		\item $h_{i,1} \in \l$;
		\item $h_{1,i} = h_{1,1} - s_{i-1}$, where $s_{i-1} \in S_{\eta^*}$.
	\end{itemize}

\begin{corollary}\label{cor:struct_lambda_n2k}
Let $\eta \in \dbar$, and let $\lambda$ denote the partition associated to $\eta$ as described above. Then the following hold:
\begin{enumerate}
    \item\label{lunghezzalambda_n2k} the total number of parts of $\lambda$ is $|\lambda| = n-2$;
    \item\label{massimapartelambda_n2k} the largest part of $\lambda$ satisfies $\lambda_{n-2} = 2n-5$;
    \item\label{partimancantilambda_n2k} the number of missing parts of $\lambda$ is $\#\mathcal{M}_{\lambda} = n-3$.
\end{enumerate}
\end{corollary}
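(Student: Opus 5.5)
The plan is to mirror the proof of Corollary~\ref{cor:struct_lambda} from the triangular case, reading each quantity off the first column and first row of $Y_{\eta^*}$ through Proposition~\ref{riconoscimentoNS}. The three items come in order, and each uses the cell counts of Lemma~\ref{2n-5C1R1} together with the hook length $h_{1,1}=2n-5$ fixed by the construction~\eqref{2n-5equazionecostruzione'}.

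For item~(\ref{lunghezzalambda_n2k}), I would invoke Proposition~\ref{riconoscimentoNS}(\ref{riconoscimentoNS1}). The hook lengths of the cells in the first column are exactly the gaps of $S_{\eta^*}$, listed in decreasing order. Hence $|\lambda|=\#S_{\eta^*}^c=\#C_1$, which equals $n-2$ by Lemma~\ref{2n-5C1R1}.

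For item~(\ref{massimapartelambda_n2k}), the same item of Proposition~\ref{riconoscimentoNS} says that the top cell of the first column carries the largest gap, namely the Frobenius number. Therefore $\lambda_{n-2}=F(S_{\eta^*})=h_{1,1}=2n-5$ by construction.

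For item~(\ref{partimancantilambda_n2k}), I would use the fact that, in the $\KN$ path, the east steps taken before the final north step (the one labeled $F(S_{\eta^*})$) correspond exactly to the columns of the diagram, i.e.\ to the cells of the first row. So $\#R_1=\#\{s\in S_{\eta^*}\mid 0\le s<2n-5\}$. Removing $0$, the remaining elements are precisely the integers in $\{1,\dots,\lambda_{n-2}\}$ that are not parts, so $\#\mathcal{M}_\lambda=\#R_1-1=n-3$, using $\#R_1=n-2$ from Lemma~\ref{2n-5C1R1}. As a sanity check, $|\lambda|+\#\mathcal{M}_\lambda=2n-5=\lambda_{n-2}$, as it must be, and $n-3=\lfloor (2n-5)/2\rfloor$, the bound~\eqref{eq:bound-missing}.

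The only step requiring care is this last identification of first-row cells with elements of $S_{\eta^*}$ below the Frobenius number, including $0$. I would justify it exactly as in Corollary~\ref{cor:struct_lambda}(\ref{partimancantilambda}), via Proposition~\ref{riconoscimentoNS}(\ref{riconoscimentoNS2}) and the description of the $\KN$ path. A minor point to check is that $\#C_1=(h_{1,1}+1)/2$, which follows from $a(c_{1,1})=l(c_{1,1})$. The printed formula in Lemma~\ref{2n-5C1R1} is off by one, but the value $n-2$ it states is correct, and that value is all I need.
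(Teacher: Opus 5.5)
Your proposal is correct and follows the paper's own proof essentially step by step: $|\lambda|=\#C_1=n-2$, $\lambda_{n-2}=h_{1,1}=2n-5$, and $\#\mathcal{M}_\lambda=\#R_1-1=n-3$, with the cell counts taken from Lemma~\ref{2n-5C1R1}. Your remark on that lemma is also accurate: since $h_{1,1}=2\,l(c_{1,1})+1$, one has $\#C_1=(h_{1,1}+1)/2=n-2$, so the printed $(h_{1,1}-1)/2$ is a typo that does not affect the conclusion.
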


\begin{proof}
We prove each claim separately.
\begin{enumerate}
    \item By Lemma~\ref{2n-5C1R1}, the first column of $Y_{\eta^*}$ contains $\#C_1 = n-2$ cells. By the construction of $\lambda$ from $\eta$, this implies that the total number of parts of $\lambda$ is $|\lambda| = n-2$.
    \item From the previous point, we have that the largest part of $\lambda$ corresponds to the hook length of the top-left cell of the diagram:
    $
    \lambda_{n-2} = h_{1,1} = 2n-5.
    $
    \item From Lemma~\ref{2n-5C1R1} we know that
    \[
    \#\{s\in S_{\eta^*} \mid 0\le s \le 2n-5\} = \#R_1 = n-2.
    \]
    Clearly, $\#\mathcal{M}_{\l} = \#\{s\in S_{\eta^*} \mid 0 < s \le 2n-5\} = n-3$. \qedhere
    \end{enumerate}
\end{proof}

\begin{proposition}\label{2n-5peso^*}
Let $\eta \in \dbar$. Then the corresponding partition $\lambda$ satisfies
\[
\lambda \vdash T_{n,n-2k},
\]
where $2 \le 2k \le n-4$.
\end{proposition}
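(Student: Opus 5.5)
The plan mirrors the proof of Proposition~\ref{pesolambda}: the weight of $\lambda$ is the sum of the hook lengths in the first column of $Y_{\eta^*}$, by Proposition~\ref{riconoscimentoNS}. For every cell $c_{i,1}$ we write $h_{i,1}=1+l(c_{i,1})+a(c_{i,1})$. By Lemma~\ref{2n-5C1R1} the first column has $n-2$ cells, so $l(c_{i,1})=n-2-i$. We then split off the top cell, whose hook is $h_{1,1}=2n-5$. This gives
\[
\sum_{i=1}^{n-2}h_{i,1}=2n-5+\sum_{i=2}^{n-2}(n-1-i)+\sum_{i=2}^{n-2}a(c_{i,1})
=2n-5+T_{n-3}+\sum_{i=2}^{n-2}a(c_{i,1}).
\]

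The key step is to evaluate $\sum_{i\ge 2}a(c_{i,1})=\sum_{i\ge2}(\#R_i-1)$, which is the number of cells of $Y_{\eta^*}$ lying outside the first row and the first column. I would obtain it from the diagonal hook data fixed by Eq.~\eqref{2n-5equazionecostruzione'}. The diagram is self-conjugate with diagonal hooks $2n-5,\eta_l,\dots,\eta_1$, so its total number of cells is the sum of the diagonal hooks, namely $2n-5+\sum_j\eta_j=2n-5+2k+2$. This is the standard Frobenius decomposition into diagonal hooks, each of which is a disjoint set of cells. The first hook, of length $2n-5$, is exactly the union of the first row and the first column, since $\#R_1+\#C_1-1=2n-5$. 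Hence the cells outside the first row and column number $\sum_j\eta_j=2k+2$, and therefore
\[
\sum_{i=2}^{n-2}a(c_{i,1})=2k+2 .
\]
This agrees with the count $\#Y_{S_\lambda}=2n-3+2k$ obtained in the forward direction.

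Substituting, the weight is $2n-5+\frac{(n-3)(n-2)}{2}+2k+2$. Expanding, $\frac{(n-3)(n-2)}{2}=\frac{n^2-5n+6}{2}$, so the weight equals $\frac{n^2+n}{2}-(n-2k)=T_n-(n-2k)=T_{n,n-2k}$, as claimed. The range $2\le 2k\le n-4$ guarantees that $d=n-2k$ lies in the admissible interval $1\le d\le n-1$.

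The main obstacle is making rigorous that the diagram defined by Eq.~\eqref{2n-5equazionecostruzione'} really is a Young diagram with those diagonal hooks. This requires the diagonal hook arms to be strictly decreasing: we need $2n-5>\eta_l$ and the $\eta_j$ distinct and odd, so that each arm $(\eta_j-1)/2$ is an integer. Then the decomposition of the cells into $l+1$ nested diagonal hooks is legitimate. The inequality $2n-5>\eta_l$ holds because $\eta_l\le 2k+1\le n-3$. Once this is checked, the rest is the arithmetic above.
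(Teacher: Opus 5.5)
Your proof is correct and follows the paper's argument: sum the first-column hooks, write each as $1+l(c_{i,1})+a(c_{i,1})$, and use that the arms below the first row contribute $2k+2$. You justify that last step with the diagonal-hook decomposition, noting that the first hook is exactly $R_1\cup C_1$; the paper leaves this implicit, and its displayed term $\sum_{i=1}^{l}2\eta_i$ should read $\sum_{i=1}^{l}\eta_i$ here (a slip carried over from the triangular case), although the value $2k+2$ it then uses agrees with yours.
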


\begin{proof}
The weight of $\lambda$ is equal to the sum of the hook lengths in the first column of $Y_{\eta^*}$. Hence, we have
\[
\begin{aligned}
\sum_{i=1}^{n-2} h_{i,1} 
&= h_{1,1} + \sum_{i=2}^{n-2} h_{i,1} \\
&= (2n-5) + \sum_{i=2}^{n-2} \bigl( l(c_{i,1}) + 1 + a(c_{i,1}) \bigr) \\
&= (2n-5) + \sum_{i=1}^{n-3} i + \sum_{i=1}^{l}  2\eta_i \\
&= 2n-5 + \frac{(n-3)(n-2)}{2} + (2k+2) \\
&= \frac{1}{2}(n^2 - n) + 2k \\
&=\frac{1}{2}(n^2+n)-n+2k\\
&= T_{n,n-2k}. 
\end{aligned}
\]
\end{proof}

\subsubsection{Proof of unrefinability}
To prove that $\lambda$ is unrefinable, we proceed as in the triangular case. 
Assuming by contraposition that a hook $s_\alpha$ outside the first column does not belong to $\l=S^c_{2\eta^*}$, 
we show that it either satisfies the condition of Proposition~\ref{riconoscimentounref}(\ref{item-ric3b}), or its existence leads to a contradiction. 
For this purpose, we distinguish two regions of the diagram: hooks inside the main square and hooks below or to the right of the square.
To proceed, we need the following intermediate result.

\begin{lemma}\label{2n-5hookbound}
Let $c_{i,j}$ be any cell in $Y_{\eta^*}$ with $i,j\ge 2$. Then
\[
h_{i,j} < n-2.
\]
\end{lemma}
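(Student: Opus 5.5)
Since hook lengths strictly decrease moving right along a row or down a column, the largest hook among cells $c_{i,j}$ with $i,j\ge 2$ is $h_{2,2}$. Every such cell lies weakly below and to the right of $c_{2,2}$, and a hook length is at least one more than the hook length of any cell strictly to its right or below it in the diagram. So it suffices to prove $h_{2,2}<n-2$. If $l=0$, there are no cells with $i,j\ge2$, because the diagram is a single hook, and there is nothing to prove. So we assume $l\ge 1$.

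By construction \eqref{2n-5equazionecostruzione'} we have $h_{2,2}=\eta_l$, the largest part of $\eta\in\dbar$. Since $\eta$ is a partition of $2k+2$ into distinct odd parts with at least two parts (the convention $t\ge 2$ for elements of $\dist_N$), we get $\eta_l\le 2k+2-\eta_1\le 2k+1$. Using the standing hypothesis $2k\le n-4$, this gives $\eta_l\le n-3<n-2$, which is the desired bound. Parity gives some room to spare: $\eta_l$ is odd, and since $\eta$ has an even number of odd parts summing to $2k+2$, the case $l\ge 2$ is automatic.

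The potential obstacle is whether a partition with a single part, $\eta=(2k+2)$, is admissible. It is not, because $2k+2$ is even and so cannot be a single odd part. Therefore $l\ge 2$, and the bound above holds.

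To be safe, I would also double-check the claim that $h_{i,j}\le h_{2,2}$ for all $i,j\ge2$ using the formula from Lemma~\ref{hookinterno}. We have $h_{i,j}=h_{i,1}+h_{1,j}-h_{1,1}$, and both $h_{i,1}\le h_{2,1}$ and $h_{1,j}\le h_{1,2}$ hold by the strict monotonicity of the first column and first row. Hence $h_{i,j}\le h_{2,1}+h_{1,2}-h_{1,1}=h_{2,2}$. Combining this with $h_{2,2}=\eta_l\le 2k+1\le n-3$ concludes the proof.
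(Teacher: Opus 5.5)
Your proof is correct and takes the same route as the paper: bound $h_{2,2}=\eta_l$, then propagate the bound to every cell with $i,j\ge 2$ using the monotonicity of hook lengths. Your sharper estimate $\eta_l\le 2k+1\le n-3$, which follows either because $\eta$ has at least two parts or because $\eta_l$ is odd while $2k+2$ is even, is in fact needed at the boundary $2k=n-4$. There the paper's stated chain $\eta_l\le 2k+2<n-2$ fails to be strict, since $2k+2=n-2$.
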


\begin{proof}
For $i,j\ge 2$ we have
\begin{equation}\label{eq:boh2}
\eta_{l-(i-2)} = h_{i,i} =  \l_{n-2} - 2 s_{i-1}. 
\end{equation}

In particular, the hook of the top-left interior cell satisfies 
$
h_{2,2} = \eta_l \le 2k+2 < n-2,
$
where we use that $2k < n-2$. Monotonicity of hooks, $h_{i,j} > h_{i+1,j}$ and $h_{i,j} > h_{i,j+1}$, implies that every interior cell $(i,j)$ with $i,j\ge 2$ also satisfies $h_{i,j} < n-2$.
\end{proof}

In the next lemmas we analyze each region separately, denoting by $s_\alpha$ one of the hooks $h_{i,j}$ of $Y_{\eta^*}$. 

\begin{lemma}\label{2n-5idiversoj}
Let $2 \le i,j \le l+1$ with $i \neq j$, and let $s_\alpha$ be a hook length outside the first column. Then the existence of such an $s_\alpha$ leads to a contradiction.
\end{lemma}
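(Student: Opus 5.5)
We argue by analogy with Lemma~\ref{salphaidiversoj}, exploiting the self-conjugacy of $Y_{\eta^*}$. The first step is to express an interior hook through the diagonal data. By Eq.~\eqref{equazionehookinterno} and the symmetry $h_{1,i}=h_{i,1}$, for $2\le i,j\le l+1$ we have
\[
s_\alpha=h_{i,j}=h_{1,i}+h_{1,j}-h_{1,1}=\l_{n-2}-s_{i-1}-s_{j-1}.
\]
From Eq.~\eqref{eq:boh2}, namely $\eta_{l-(x-2)}=\l_{n-2}-2s_{x-1}$, we get $s_{x-1}=\frac{\l_{n-2}-\eta_{l-(x-2)}}{2}$. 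Substituting gives
\[
s_\alpha=\frac{\eta_{l-(i-2)}+\eta_{l-(j-2)}}{2}.
\]
This is an integer because both parts of $\eta$ are odd.

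Next we use Lemma~\ref{2n-5hookbound}, which gives $s_\alpha<n-2$. Since $s_\alpha\in S_{\eta^*}$ is then one of the first $n-2$ elements of $S_{\eta^*}$, it equals $s_{\alpha}$ with index $\alpha\le n-3$.

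The key step is to show that $s_\alpha$ lies below the diagonal threshold, so that $\alpha+1\le l+1$. Self-conjugacy together with $h_{1,1}=2n-5$ shows that $s_x<\frac{2n-5}{2}$ holds exactly for the indices whose diagonal cell $c_{x+1,x+1}$ exists, i.e.\ $x\le l$. The element $s_\alpha$ satisfies this, because $s_\alpha\le \frac{2k+2}{2}=k+1<\frac{2n-5}{2}$ when $2k\le n-4$. Hence $h_{\alpha+1,\alpha+1}=\eta_{l-(\alpha-1)}=\l_{n-2}-2s_\alpha$, which gives
\[
\l_{n-2}=2n-5=\eta_{l-(\alpha-1)}+\eta_{l-(i-2)}+\eta_{l-(j-2)}.
\]
We now split into cases. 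If $\alpha+1\notin\{i,j\}$, the three parts are distinct parts of $\eta$, so their sum is at most $2k+2$. Then $2n-5\le 2k+2\le n-2$, which is impossible for $n\ge 4$. If instead $\alpha+1=i$, the identity becomes $2n-5=2\eta_{l-(i-2)}+\eta_{l-(j-2)}$. Here $\eta_{l-(i-2)}+\eta_{l-(j-2)}\le 2k+2$, so
\[
2n-5\le 2(2k+2)-\eta_{l-(j-2)}\le 4k+3.
\]
Combined with $4k\le 2n-8$, this gives $2n-5\le 2n-5$, so equality holds throughout. This forces $2k=n-4$, $\eta_{l-(j-2)}=1$ and $\eta=(1,2k+1)$, which is exactly the partition removed in $\dbar$. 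Thus every case yields a contradiction.

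The main obstacle is the key step: checking that $s_\alpha$ really indexes a diagonal cell. This requires the precise description of which $s_x$ fall in the diagonal range. I would first try to justify it by reading the boundary of the self-conjugate shape via $\KN^{-1}$: the diagonal has length $l+1$, and the $s_x$ with $x\le l$ are exactly those below $\l_{n-2}/2$. The second delicate point is matching the equality case with the excluded partition, including the parity bookkeeping showing $\eta_{l-(j-2)}=1$ rather than $3$, since the parts of $\eta$ are odd.
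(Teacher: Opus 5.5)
Your proof is correct and follows the paper's argument: the same split into $\alpha+1\notin\{i,j\}$ versus $\alpha+1=i$, and the same identity expressing $2n-5$ through parts of $\eta$. It reaches the same conclusion that equality forces $2k=n-4$ and $\eta=(1,2k+1)$; the paper separates the subcases $2k+2<n-2$ and $2k+2=n-2$, which you merge into one inequality chain. Your explicit check that $s_\alpha\le k+1<\frac{2n-5}{2}$ makes $c_{\alpha+1,\alpha+1}$ a diagonal cell, a step the paper leaves implicit, and no parity argument is needed at the end, since equality already forces $\eta_{l-(j-2)}=1$.
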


\begin{proof}
We proceed by examining  two different situation.
In the first case we assume $\alpha\neq i-1,j-1$. We have
			$
			s_{\alpha}=h_{i,j}=\l_{n-2}-s_{i-1}-s_{j-1},
			$
			or
			\begin{equation}\label{2n-5sasisj=lambda}
				\l_{n-2}=s_{i-1}+s_{j-1}+s_{\alpha}.
			\end{equation}
			From Eq.~\eqref{eq:boh2} in Lemma~\ref{2n-5hookbound}, we obtain
			\[
			s_{\alpha}=\frac{\l_{n-2}-\eta_{l-(\alpha-1)}}{2}.
			\]
			Moreover 
			$s_{i-1}, s_{j-1}<n-2$, since $i,j\leq l+1$. Hence
			\[
			\begin{aligned}
				s_{i-1}&=\frac{\l_{n-2}-\eta_{l-(i-2)}}{2},\\
				s_{i-j}&=\frac{\l_{n-2}-\eta_{l-(j-2)}}{2}.
			\end{aligned}
			\]
			From Eq.~\eqref{2n-5sasisj=lambda}, we obtain
			\[
			\l_{n-2}=\frac{3}{2}\l_{n-2}-\frac{1}{2}(\eta_{l-(i-2)}+\eta_{l-(j-2)}+\eta_{l-(\alpha-1)}). 
			\]
			Simplifying, we have
			\[
			2n-5=\eta_{l-(i-2)}+\eta_{l-(j-2)}+\eta_{l-(\alpha-1)}\leq2k+2\leq n-2,
			\]
			that it is true for $n\leq3$, a contradiction.

Without loss of generality, we now suppose $\alpha=i-1$. Eq.~\eqref{2n-5sasisj=lambda} becomes
			\[
			\l_{n-2}=s_{j-1}+2s_{i-1},
			\]
			from which we obtain
			\[
			s_{j-1}=\l_{n-2}-2s_{i-1}=h_{i,i}=\eta_{l-(i-2)}.
			\]
			Since $s_{j-1}<n-2$, we have 
			\[
			\l_{n-2}=\eta_{l-(j-2)}+2s_{j-1}=\eta_{l-(j-2)}+2\eta_{l-(i-2)}.
			\]
			We can finally compute 
			\begin{equation}\label{2n-5equazioneperquestolemma}
				\eta_{l-(i-2)}+\eta_{l-(j-2)}=\frac{\l_{n-2}-\eta_{l-(j-2)}}{2}+\eta_{l-(j-2)}=n-2+\frac{\eta_{l-(j-2)}-1}{2}.
			\end{equation}
			We distinguish now two sub-cases:
			\begin{itemize}
				\item if $2k+2<n-2$, Eq.~\eqref{2n-5equazioneperquestolemma} becomes
				\[
				n-2+\frac{\eta_{l-(j-2)}-1}{2}<n-2,
				\]
				clearly a contradiction;
				\item if $2k+2=n-2$, from Eq.~\eqref{2n-5equazioneperquestolemma}, we obtain
				\[
				\eta_{l-(j-2)}+\eta_{l-(i-2)}=n-2 = 2k+2
				\]
				and hence
				\[
				\begin{aligned}
					\eta_{l-(j-2)}&=1,\\
					\eta_{l-(i-2)}&=2k+1,
				\end{aligned}
				\]
				which is forbidden by definition of $\dbar$. \qedhere
			\end{itemize}
	\end{proof}
	
\begin{lemma}\label{2n-5i=j}
Let $2 \le i = j \le l+1$, and let $s_\alpha$ be a hook length outside the first column. If such an $s_\alpha$ exists, it satisfies
\[
h_{i,1} = 2\, h_{i,i} = 2\, s_\alpha.
\]
\end{lemma}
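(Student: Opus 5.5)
The plan is to follow the template of Lemma~\ref{salphai=j}, adapted to the self-conjugate construction of $Y_{\eta^*}$. I read the claimed identity in the sense of Proposition~\ref{riconoscimentounref}(\ref{item-ric3b}): the first-column hook in the row indexed by $\alpha+1$ is twice $s_\alpha$, and the index $\alpha+1$ will coincide with $i$ in the surviving case.

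\emph{Setup.} By construction $s_\alpha=h_{i,i}=\eta_{l-(i-2)}$, and by Eq.~\eqref{eq:boh2} every $2\le x\le l+1$ satisfies
\[
\eta_{l-(x-2)}=\lambda_{n-2}-2s_{x-1}.
\]
Since $\eta\vdash 2k+2\le n-2$, we get $s_\alpha\le n-2$. I split into the cases $s_\alpha<n-2$ and $s_\alpha=n-2$; the case $s_\alpha>n-2$ cannot occur.

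\emph{Case $s_\alpha<n-2$.} Here $2s_\alpha\le 2n-6<\lambda_{n-2}$. Hence the diagonal cell $c_{\alpha+1,\alpha+1}$ has positive hook $\lambda_{n-2}-2s_\alpha$, so $\alpha+1\le l+1$, and
\[
2n-5=\eta_{l-(\alpha-1)}+2\eta_{l-(i-2)}.
\]
\begin{itemize}
\item If $\alpha\neq i-1$, the two parts $\eta_{l-(\alpha-1)}$ and $\eta_{l-(i-2)}$ are distinct, so their sum is at most $2k+2\le n-2$. This forces $\eta_{l-(i-2)}\ge n-3$, and then $\eta_{l-(\alpha-1)}\le 1$. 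So we must have $2k+2=n-2$ and $\eta=(1,n-3)=(1,2k+1)$, which is exactly the partition removed in the definition of $\dbar$. This is a contradiction.
\item If $\alpha=i-1$, the relation becomes $\lambda_{n-2}=3s_\alpha$. By Lemma~\ref{lem:simmetriaR1C1-nontriang} (equivalently, by self-conjugacy) we then get
\[
h_{\alpha+1,1}=h_{1,\alpha+1}=\lambda_{n-2}-s_\alpha=2s_\alpha,\qquad h_{\alpha+1,\alpha+1}=\lambda_{n-2}-2s_\alpha=s_\alpha .
\]
Since $\alpha+1=i$, this is precisely the claimed identity $h_{i,1}=2h_{i,i}=2s_\alpha$.
\end{itemize}

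\emph{Case $s_\alpha=n-2$.} Then $\eta_{l-(i-2)}=n-2\ge 2k+2$, so $\eta$ consists of the single part $2k+2$ (with $2k+2=n-2$). But $2k+2$ is even, which is impossible for $\eta\in\mathbb{D}^{\,\text{o}}_{2k+2}$.

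The main obstacle I expect is the subcase $\alpha\neq i-1$. It is the only place where the exclusion of $(1,2k+1)$ in $\dbar$ is needed, so the inequality $\eta_{l-(\alpha-1)}+\eta_{l-(i-2)}\le 2k+2\le n-2$ has to be pushed carefully to its boundary case $2k=n-4$. One must also check that $\alpha+1$ genuinely indexes a diagonal cell inside the principal square, i.e. $\alpha\le l$, so that Eq.~\eqref{eq:boh2} can be applied to it. This is guaranteed by the positivity of $\lambda_{n-2}-2s_\alpha$ together with the fact that the main diagonal of $Y_{\eta^*}$ has exactly $l+1$ cells.
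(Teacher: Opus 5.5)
Your proof is correct and follows the paper's route. The paper also splits into $\alpha=i-1$, where $\lambda_{n-2}=3s_\alpha$ gives $h_{i,1}=2h_{i,i}=2s_\alpha$, and $\alpha\neq i-1$, which it reduces to the subcase $\alpha=i-1$ of Lemma~\ref{2n-5idiversoj} and which ends at the excluded partition $(1,2k+1)$. You additionally rule out $s_\alpha\ge n-2$ and check that $\alpha+1\le l+1$; the paper's one-line reduction leaves both of these implicit.
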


\begin{proof}
We distinguish two cases. If $\alpha \neq i-1$, then we are in the same situation as in Lemma~\ref{2n-5idiversoj} when $\alpha = i-1$. If $\alpha = i-1$, we obtain
\[
s_{i-1} = h_{i,i} = \l_{n-2} - 2 s_{i-1},
\]
and from this equation it follows that
\[
h_{i,1} = h_{1,i} = \l_{n-2} - s_{i-1} = 2 s_{i-1} = 2 h_{i,i}.\qedhere
\]
\end{proof}

\begin{lemma}\label{2n-5fuoriquadrato}
Let $2 \le j \le l+1$ and $l+2 \le i \le n-2$, and suppose that $s_\alpha$ is a hook length outside the first column. Then the existence of such an $s_\alpha$ leads to a contradiction.
\end{lemma}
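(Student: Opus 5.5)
We follow the pattern of Lemma~\ref{lemmaboh} from the triangular case, adapting it to the self-conjugate diagram $Y_{\eta^*}$. The idea is to express the hook $h_{i,j}$ through the first row and first column, bound it from above, and then locate $s_\alpha$ inside the main square. Once there, the situation has already been treated by Lemmas~\ref{2n-5idiversoj} and~\ref{2n-5i=j}.

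\textbf{Step 1: computing $h_{i,j}$.} By Eq.~\eqref{equazionehookinterno} and self-conjugacy,
\[
h_{i,j}=h_{1,j}+h_{i,1}-h_{1,1}=h_{1,j}+h_{1,i}-h_{1,1}=\l_{n-2}-s_{j-1}-s_{i-1}.
\]
Here $j\le l+1$. The indices $i-1\ge l+1$ correspond to elements $s_{i-1}$ beyond the diagonal square. We first show that $s_{i-1}\ge n-2$ for $i\ge l+2$. Suppose instead that $s_{i-1}<n-2$. Then $h_{i,i}=\l_{n-2}-2s_{i-1}>0$ would hold, so the cell $c_{i,i}$ would exist and the diagonal would be longer than $l+1$, contradicting the construction. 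By parity, $\l_{n-2}-2s_{i-1}$ is odd and hence nonzero, so in fact $s_{i-1}\ge n-2$.

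\textbf{Step 2: locating $s_\alpha$.} Since $j\le l+1$, the same diagonal formula gives $s_{j-1}=(\l_{n-2}-\eta_{l-(j-2)})/2<n-2$. Combined with $s_{i-1}\ge n-2$, this yields
\[
s_\alpha=h_{i,j}\le \l_{n-2}-(n-2)-s_{j-1}<n-2.
\]
In particular $h_{\alpha+1,\alpha+1}=\l_{n-2}-2s_\alpha>0$, so $1\le\alpha\le l$ and the cell $c_{\alpha+1,\alpha+1}$ lies in the main square. We then rewrite the defining relation as
\[
\l_{n-2}=s_\alpha+s_{j-1}+s_{i-1},
\]
which gives $h_{\alpha+1,j}=\l_{n-2}-s_\alpha-s_{j-1}=s_{i-1}$. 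Thus the element $s_{i-1}\in S_{\eta^*}$ appears as a hook outside the first column, at the cell $(\alpha+1,j)$ with both indices in $\{2,\dots,l+1\}$. (If $j=1$ the cell is in the first column and there is nothing to prove, so we assume $j\ge2$.)

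\textbf{Step 3: reduction to the square.} If $\alpha+1\neq j$, Lemma~\ref{2n-5idiversoj} applied to the hook $s_{i-1}$ gives a contradiction directly. If $\alpha+1=j$, then $h_{j,j}=s_{i-1}\ge n-2$. This contradicts Lemma~\ref{2n-5hookbound}, which says all interior hooks are smaller than $n-2$. A cleaner alternative for Step 2 is to invoke Lemma~\ref{2n-5hookbound} immediately: it gives $s_\alpha<n-2$ at once, and shows that $h_{\alpha+1,j}=s_{i-1}\ge n-2$ is impossible for any interior cell. This disposes of both cases together.

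The main obstacle is Step 1's claim that the elements $s_{i-1}$ indexed beyond the square satisfy $s_{i-1}\ge n-2$. This requires controlling exactly where the diagonal of $Y_{\eta^*}$ ends, using the oddness of $\l_{n-2}=2n-5$ to exclude the boundary case $s_{i-1}=(\l_{n-2})/2$. The rest is bookkeeping with Eq.~\eqref{equazionehookinterno}.
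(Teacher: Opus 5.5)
Your proof is correct, and its skeleton matches the paper's. Both write $h_{i,j}=\lambda_{n-2}-s_{i-1}-s_{j-1}$, use Lemma~\ref{2n-5hookbound} to get $s_\alpha<n-2$ and hence $2\le\alpha+1\le l+1$, and observe that $h_{\alpha+1,j}=s_{i-1}$.

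You differ in the finish. The paper records $s_{i-1}=h_{\alpha+1,j}<n-2$ and then says one is ``in the same conditions'' of Lemma~\ref{2n-5idiversoj}. You instead first show that $s_{i-1}\ge n-2$ for every $i\ge l+2$, because the diagonal of $Y_{\eta^*}$ stops at $l+1$. So $h_{\alpha+1,j}=s_{i-1}$ contradicts Lemma~\ref{2n-5hookbound} immediately. Your version is more self-contained:
\begin{itemize}
\item it does not rely on Lemma~\ref{2n-5idiversoj};
\item it handles the case $\alpha+1=j$, which the paper's reduction to a lemma stated for distinct indices passes over;
\item it shows that the paper's own inequality $s_{i-1}<n-2$ is already contradictory by itself.
\end{itemize}
The paper's route, in exchange, keeps the uniform ``push everything back into the square'' pattern used in the triangular case (Lemma~\ref{lemmaboh}).

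One step needs a firmer justification. In Step~1 you deduce that $c_{i,i}$ exists from the positivity of $\lambda_{n-2}-2s_{i-1}$. But Eq.~\eqref{equazionehookinterno} is only valid for cells already known to be in the diagram. Use instead $\#R_i-i=h_{i,1}-\#C_1=(2n-5-s_{i-1})-(n-2)=n-3-s_{i-1}$, via $h_{i,1}=h_{1,i}$ and $\#C_1=n-2$. Then $c_{i,i}$ exists if and only if $s_{i-1}\le n-3$. This gives Step~1 directly and justifies the localization $\alpha+1\le l+1$ in Step~2, which the paper also uses implicitly. It also makes your parity remark unnecessary, since $s_{i-1}\le n-3$ already forces $2n-5-2s_{i-1}\ge 1$.
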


\begin{proof}
		We have 
		\[
		s_{\alpha}=h_{i,j}=\l_{n-2}-s_{i-1}-s_{j-1}.
		\]
		From Lemma~\ref{2n-5hookbound} we have $s_{\alpha}<n-2$, hence $2\leq \alpha+1\leq l+1$.
		Since
		\[
		n-2>h_{\alpha+1,j}=\l_{n-2}-s_{j-1}-s_{\alpha}=s_{i-1},
		\]
		we are in the same conditions of Lemma~\ref{2n-5idiversoj}, which lead to a contradiction.
	\end{proof}

	All the preceding lemmas collectively show that any hook length $s_\alpha$ lying outside the first column of $Y_{\eta^*}$ must either coincide with the hook length of a cell in the first column, or be exactly half of the hook length of the corresponding diagonal cell.
Since every possible position outside the first column has now been analyzed, any additional hook would necessarily violate the explicit relations between hooks and odd parts.
Thus every potential refinement outside the first column is ruled out, and we may summarize the outcome in the following theorem.

\begin{theorem}\label{lambdanraffinabile-2n-5}
Let $\eta \in \dbar$, and let 
$
\lambda = S^c_{\eta^*}
$
be the partition associated with the (self-conjugate) Young diagram $Y_{\eta^*}$, where 
$S_{\eta^*}=\KN^{-1}(Y_{\eta^*})$.  
Then $\lambda$ is unrefinable.  
In particular,
\[
\lambda \in \overline{\mathcal{U}}_{T_{n,n-2k}}.
\]
\end{theorem}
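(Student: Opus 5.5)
The argument mirrors the proof of Theorem~\ref{lambdanraffinabile}: assemble the structural facts about $Y_{\eta^*}$ already established in this section and check each defining condition of $\overline{\mathcal{U}}_{T_{n,n-2k}}$ in turn.

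First, weight and maximality. By Proposition~\ref{2n-5peso^*}, $\lambda \vdash T_{n,n-2k}$. By Corollary~\ref{cor:struct_lambda_n2k}(\ref{massimapartelambda_n2k}), $\lambda_{n-2}=2n-5$, which is the upper bound recorded in Tab.~\ref{tab:accl-bound} for $n-d=2k$ even. By Corollary~\ref{cor:struct_lambda_n2k}(\ref{partimancantilambda_n2k}), $\#\mathcal{M}_\lambda = n-3 = \lfloor (2n-5)/2\rfloor$. So once unrefinability is known, $\lambda$ is maximal and attains the bound \eqref{eq:bound-missing}, hence lies in $\overline{\mathcal{U}}_{T_{n,n-2k}}$.

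For unrefinability I will apply the criterion of Proposition~\ref{riconoscimentounref}(\ref{item-ric3}). Every hook in the first column is a part by construction, so it suffices to show that every hook $h_{i,j}$ with $j\ge 2$ either lies in $\lambda$ or equals half of $h_{i,1}$. Suppose $h_{i,j}=s_\alpha\in S_{\eta^*}$. By self-conjugacy ($h_{i,j}=h_{j,i}$) I may assume $j\le i$. The cases then split by position.
\begin{enumerate}
\item If $i=1$, the cell is in the first row, so $h_{1,j}=h_{j,1}\in\lambda$ and there is nothing to prove.
\item If $2\le i,j\le l+1$ with $i\neq j$, Lemma~\ref{2n-5idiversoj} gives a contradiction.
\item If $i=j\le l+1$, Lemma~\ref{2n-5i=j} shows that condition~(\ref{item-ric3b}) holds.
\item If $2\le j\le l+1<i$, Lemma~\ref{2n-5fuoriquadrato} gives a contradiction.
\item If $i,j\ge l+2$, the cell does not exist: the diagonal has exactly $l+1$ cells because $Y_{\eta^*}$ is built with $l+1$ diagonal hooks, so any cell with $i,j\ge l+2$ would produce a further diagonal cell. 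Hence this case is vacuous.
\end{enumerate}
Remaining cells in column $1$ or row $1$ are handled by the symmetry $h_{1,i}=h_{i,1}$.

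The main obstacle is confirming that this case split is exhaustive, in particular that the diagonal really has exactly $l+1$ cells. The construction rules \eqref{2n-5equazionecostruzione'} fix only the diagonal hooks and the balance of arm and leg on the diagonal, so I need these to determine a valid self-conjugate diagram. For this I will use the standard Frobenius-coordinate correspondence: distinct odd diagonal hooks $2n-5>\eta_l>\dots>\eta_1$ determine a unique self-conjugate diagram whose Durfee square has side $l+1$. The inequality $\eta_l\le 2k+2<2n-5$ guarantees that the diagonal hooks are strictly decreasing, as this correspondence requires. Given that, the theorem follows by combining the four lemmas with Proposition~\ref{riconoscimentounref}.
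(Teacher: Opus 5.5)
Your proposal is correct and follows the paper's own proof. The weight comes from Proposition~\ref{2n-5peso^*}, the maximal part $2n-5$ and the $n-3$ missing parts from Corollary~\ref{cor:struct_lambda_n2k}, and unrefinability from feeding Lemmas~\ref{2n-5idiversoj}, \ref{2n-5i=j} and \ref{2n-5fuoriquadrato} into Proposition~\ref{riconoscimentounref}. Beyond that, you use self-conjugacy for the cells to the right of the square and the Frobenius-coordinate description (Durfee square of side $l+1$) to rule out cells with $i,j\ge l+2$, which makes explicit the exhaustiveness of the case split that the paper leaves implicit.
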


\begin{proof}
By the previous lemmas, every hook length outside the first column either reproduces a hook in the first column or is forced to match half of a diagonal hook, and any other possibility leads to a contradiction. Hence $\lambda$ is unrefinable by Proposition~\ref{riconoscimentounref}. 
By Proposition~\ref{2n-5peso^*}, the partition $\lambda$ has weight $T_{n,n-2k}$.  
Corollary~\ref{cor:struct_lambda_n2k} ensures that $\lambda$ has maximal part $2n-5$ and exactly $n-3$ missing parts, which characterizes the elements of $\overline{\mathcal{U}}_{T_{n,n-2k}}$.  
\end{proof}

To conclude, observe that for $2 \le 2k \le n-4$ the previous results imply that the number of maximal unrefinable partitions of $T_{n,n-2k}$ coincides with
$
\# {{\mathbb{D}}}^{\, \text{o}}_{2k+2},
$
the count of odd-distinct partitions of $2k+2$.
Indeed,  in the case $2k=n-4$ we have \[{{\mathbb{D}}}^{\, \text{o}}_{2k+2} = \dbar \cup \{(1,2k+1)\},\] which corresponds bijectively to 
$\mup{T_{n,4}} = \overline {\mathcal U}_{T_{n,4}} \cup \{\widetilde{\tau}_n\}$, where the exceptional partition $\widetilde{\tau}_n$ of Tab.~\ref{tab:exceptions}  (which is not allowed in $\overline {\mathcal U}$ because it does not maximize the number of missing parts) is mapped to $(1,2k+1)$.
It can be seen by numerical examples that this particular partition  produces  nonconforming behaviours under the correspondence \(\KN^{-1}\): the associated diagram fails to satisfy the structural constraints that guarantee unrefinability (similarly to Example~\ref{controesempio}), and therefore must be removed from the bijection.

\section{nontriangular weight with maximal part $2n-4$}\label{sec:nontri4}
In this  final case we consider maximal unrefinable partitions of nontriangular weight $T_{n,n-2k+1}$, whose largest part equals $2n-4$,
and we aim to prove, for $8\leq 2k\leq n-2$, that 
\[\#\mup{T_{n,n-2k+1}} = 1+\# \mathbb{D}^{\,\text{e}}_{2k} = 1+\# \mathbb{D}_{k}.\] 
We establish this correspondence by constructing again a double embedding, presented in the two subsections that follow. In this case, the cardinality analysis of the sets of unrefinable partitions in the nontriangular case splits into two subcases depending on the value of $d$. Both scenarios lead to the same combinatorial conclusions regarding the correspondence with the set $\mathbb{D}_k$ of partitions of $k$ into distinct parts:

\begin{itemize}
    \item {case $d=3$ (or $2k=n-2$)}: in this setting, the partition $\widetilde\sigma_n$ of Tab.~\ref{tab:exceptions} belongs to  $\mup{}$. To proceed with the geometric characterization, we obtain $\overline{\mathcal{U}}$ by removing this partition, i.e.\ $\overline{\mathcal{U}} = \mup{} \setminus \{\widetilde\sigma_n\}$. It is then shown that the resulting set $\overline{\mathcal{U}}$ is equipotent to $\mathbb{D}_k$. Here, the ``$+1$'' term in the total count of unrefinable partitions is precisely accounted for by the inclusion of the previously removed partition $\widetilde\sigma_n$;
    
    \item {case $d > 3$}: in this case, the partition $\widetilde\sigma_n$ is not defined, so that $\overline{\mathcal{U}}=\mup{}$. However, in order to establish the correct bijection with the Young structures associated with distinct-part partitions of $k$, we identify and remove a specific partition \[\zeta_{n,k} \deq (1,2,\dots, n-k-3, n-k-1, \dots, n-3, n-2+k, 2n-4) \in \overline{\mathcal{U}}.\] It follows that $\overline{\mathcal{U}} \setminus \{\zeta_{n,k}\}$ is equipotent to $\mathbb{D}_k$.
\end{itemize}

Let proceed, as in the previous cases, by establishing the double embedding in two steps. It should be noted that the distinction between the cases $d=3$ and $d \neq 3$ will not be further utilized in the following derivations. Although the underlying sets are defined differently, the geometric procedures and the logic of the proofs remain valid in every instance; the counting arguments effectively compensate for these differences as detailed above, ensuring the same numerical results in both scenarios.

Let $\lambda = (\lambda_1, \dots, \lambda_t)$ be a partition such that 
\[
\lambda \in 
\begin{cases} 
\,\overline{\mathcal{U}}_{T_{n,n-2k+1}} & \text{if } d=3, \\ 
\,\overline{\mathcal{U}}_{T_{n,n-2k+1}} \setminus \{\zeta_{n,k}\} & \text{if } d > 3. 
\end{cases}
\]

As for the structural properties of $Y_{S_\lambda}$, one can easily note that all results from Lemma~\ref{lemLR} to Lemma~\ref{d2z+1} proven in the triangular case hold also in this case. Therefore, as in the previous Sec.~\ref{sec:tri}, the Young diagram associated via the $\KN$ transformation exhibits the geometric pattern of a main square region together with a single additional column (the extra column) placed immediately immediately to the right of the square. The only significant difference compared with the triangular case analyzed in Sec.~\ref{sec:tri} is that the weight observed along the main diagonal is different. This structural variation is formalized in the following result.

\begin{proposition}
    Let $\lambda \in \overline{\mathcal{U}}_{T_{n,n-2k+1}}$. Then the total number of cells in $Y_{S_\lambda}$ is 
    \[
    \#Y_{S_\lambda} = 2n - 4 + 2k.
    \]
    Consequently, the sum of the hook lengths along the main diagonal is $2n - 4 + 2k$.
\end{proposition}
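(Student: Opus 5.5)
We follow the same cell count used in the triangular case and in Sec.~\ref{sec:nontri5}. The plan is to add up the row lengths of $Y_{S_\lambda}$, get each row length from the first-column hook, and then substitute the weight $T_{n,n-2k+1}$.

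First we record the shape data. Here $\lambda_t=2n-4$ and $\lambda\in\overline{\mathcal{U}}$, so $\#\mathcal{M}_\lambda=n-2$. Lemma~\ref{lemLR} applies verbatim, since its proof only uses the largest part and the number of missing parts. Hence $\#C_1=|\lambda|=n-2$, and the weight is
\[
|\lambda|_{\mathrm{wt}}=\sum_i\lambda_i = T_{n,n-2k+1} = T_n-(n-2k+1).
\]
The length is $n-2$ because the number of parts below $\lambda_t$ equals $\#\mathcal{M}_\lambda=n-3$ minus the excluded middle value, by Remark~\ref{rmk:propbarU}. Since $\lambda_t/2$ is not a part, the integers $1,\dots,2n-5$ other than $n-2$ split equally into $n-3$ parts and $n-3$ missing values. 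Adding $\lambda_t$ gives $n-2$ parts and $n-2$ missing values, with $n-2$ itself among the missing values.

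Next we compute the row lengths. For the cell $c_{i,1}$ the leg is $\#C_1-i$, so
\[
\#R_i=h_{i,1}-(\#C_1-i).
\]
By Proposition~\ref{riconoscimentounref}(\ref{item-ric1}), $\{h_{i,1}\}$ is exactly the set of parts of $\lambda$. Summing over $1\le i\le n-2$ therefore gives
\[
\#Y_{S_\lambda}=\sum_{i}\lambda_i-(n-2)^2+T_{n-2}
=\tfrac{n(n+1)}{2}-(n-2k+1)-(n-2)^2+\tfrac{(n-2)(n-1)}{2}.
\]
This simplifies to
\[
\tfrac12(6n-6)-(n-2k+1)=2n-4+2k.
\]

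The ``consequently'' part is the standard fact that a Young diagram is partitioned into its diagonal hooks. For each diagonal cell $c_{i,i}$, its hook consists of the cell itself, its arm and its leg. These hooks are pairwise disjoint and together they cover the diagram. The sum of the diagonal hook lengths therefore equals the number of cells, which is $2n-4+2k$.

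The only step needing care is the second one, namely justifying that $\#C_1=n-2$ and that the first column lists all parts, so that Lemma~\ref{lemLR} transfers unchanged to the weight $T_{n,n-2k+1}$. The rest is arithmetic.
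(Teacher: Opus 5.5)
Your proof is correct and is essentially the paper's own argument: sum $\#R_i=h_{i,1}-(\#C_1-i)$ over the $n-2$ rows, use that the first-column hooks are exactly the parts of $\lambda$, and substitute the weight $T_n-(n-2k+1)$. You also justify two points the paper leaves implicit: $\#C_1=n-2$, via the complementarity $x\in\lambda \iff \lambda_t-x\notin\lambda$ (you momentarily write $\#\mathcal{M}_\lambda=n-3$, but your recount immediately corrects this to $n-2$), and the ``consequently'' clause, via the decomposition of the diagram into disjoint diagonal hooks.
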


\begin{proof}
    The total number of cells in the diagram can be computed as the sum of the number of cells in each row:
    \[
    \begin{aligned}
        \#Y_{S_\lambda} &= \sum_{i=1}^{n-2} \#R_i \\
        &= \sum_{i=1}^{n-2} h_{i,1} - (\#C_1 - i) \\
        &= \sum_{i=1}^{n-2} h_{i,1} - \sum_{i=1}^{n-2} \#C_1 + \sum_{i=1}^{n-2} i \\
        &= T_{n,n-2k+1} - (n-2)(n-2) + T_{n-2} \\
        &= \frac{n(n+1)}{2} - (n-2k+1) - (n-2)^2 + \frac{(n-2)(n-1)}{2} \\
        &= \frac{1}{2}(4n - 8 + 4k) \\
        &= 2n - 4 + 2k.
    \end{aligned}
    \]
\end{proof}

Note that this proves the first part of the embedding, namely that to each partition $\lambda$ of $\overline{\mathcal{U}}$ as defined above, there corresponds a distinct-part partition of weight $k$. 

To conclude, we prove the converse by considering $\eta = (\eta_1, \dots, \eta_l) \in \mathbb{D}_k$ and defining $\eta^* = (\eta_1, \dots, \eta_l, n-2)$ and $2\eta^* = (2\eta_1, \dots, 2\eta_l, 2n-4)$. Under these definitions, the Young diagram $Y_{2\eta^*}$ satisfying the following conditions
\begin{align*}
h_{1,1} &= 2n - 4, \nonumber \\
h_{i,i} &= 2\eta_{\,l-(i-2)}, \qquad 2 \le i \le l+1, \nonumber \\
a(c_{i,i}) &= l(c_{i,i}) + 1, \qquad 1 \le i \le l+1, %\label{equazionecostruzione'}
\end{align*}
defines an unrefinable partition of weight $T_{n,n-2k+1}$.

As above, many results remain identical to those presented in Sec.~\ref{sec:tri}. In particular, the properties from Lemma~\ref{rowsColsY} to Lemma~\ref{block2symmetry} continue to hold in this context. Consequently, we only need to establish the following proposition regarding the weight calculation.

\begin{proposition}
Let $\eta \in \mathbb D_k$ and  let $\lambda = S^c_{2\eta^*}$ be the partition associated to the Young diagram $Y_{{2\eta^*}}$, where $S_{2\eta^*} = \KN^{-1}(Y_{2\eta^*})$. 
Then $\lambda$ satisfies
\[
\lambda \vdash T_{n,n-2k+1},
\]
where $8 \le 2k \le n-2$.
\end{proposition}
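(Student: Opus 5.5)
The plan is to mirror the proof of Proposition~\ref{pesolambda}: compute the weight of $\lambda$ as the sum of the hook lengths in the first column of $Y_{2\eta^*}$. By Proposition~\ref{riconoscimentoNS}(\ref{riconoscimentoNS1}) these hooks are exactly the parts of $\lambda = S^c_{2\eta^*}$. Lemma~\ref{rowsColsY} carries over verbatim, since its proof uses only the defining conditions on $h_{1,1}$, $h_{i,i}$ and $a(c_{i,i})=l(c_{i,i})+1$. Hence $\#C_1=n-2$ and $h_{1,1}=2n-4$, and I will split
\[
\sum_{i=1}^{n-2}h_{i,1}=(2n-4)+\sum_{i=2}^{n-2}\bigl(l(c_{i,1})+1\bigr)+\sum_{i=2}^{n-2}a(c_{i,1}).
\]

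Since $\#C_1=n-2$, the leg contributions give $\sum_{i=2}^{n-2}(n-1-i)=\sum_{i=1}^{n-3}i=\frac{(n-3)(n-2)}{2}$. For the arms, $a(c_{i,1})=\#R_i-1$, so $\sum_{i=2}^{n-2}a(c_{i,1})$ counts exactly the cells of $Y_{2\eta^*}$ outside the first row and first column. I will evaluate this by the same diagonal-hook decomposition used in the triangular case. Every cell not in the first row or column lies in exactly one diagonal hook $c_{i,i}$ with $2\le i\le l+1$, because the diagonal has length $l+1$ by construction (Lemma~\ref{block2symmetry}(\ref{colonnan-2l-2})). Hence this count equals $\sum_{i=2}^{l+1}h_{i,i}=\sum_{j=1}^{l}2\eta_j=2k$, using $\eta\vdash k$.

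Adding up,
\[
2n-4+\frac{(n-3)(n-2)}{2}+2k=\frac{n^2-n}{2}+2k-1=T_n-(n-2k+1)=T_{n,n-2k+1}.
\]
This is the claim. The hypothesis $8\le 2k\le n-2$ guarantees $1\le d=n-2k+1\le n-1$, so the notation $T_{n,d}$ is legitimate.

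The main obstacle is justifying that the arm sum equals the sum of the diagonal hooks $h_{i,i}$ for $i\ge2$. This step is silently assumed in Proposition~\ref{pesolambda}. I would make it explicit through the Frobenius-type decomposition of the diagram minus its first hook into the hooks of the diagonal cells $c_{2,2},\dots,c_{l+1,l+1}$. This requires that no cell $c_{i,j}$ with $i,j\ge 2$ lies outside the square region and the extra column without belonging to some diagonal hook. That holds because the diagonal ends at $c_{l+1,l+1}$, so every cell $c_{i,j}$ with $i,j\ge2$ has $\min(i,j)\le l+1$ and sits in the hook of $c_{\min(i,j),\min(i,j)}$.

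A secondary point is that the shifted identity $h_{i,1}=h_{1,i+1}$ of Lemma~\ref{block2symmetry} does not depend on the weight, so nothing there changes. The only difference from the triangular computation is the final arithmetic: there $2k=n+1$, whereas here $2k$ is free, which produces the deficit $d=n-2k+1$.
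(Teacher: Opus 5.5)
Your proposal is correct and is essentially the paper's proof: you sum the first-column hooks and split them into $h_{1,1}=2n-4$, the leg contributions $\frac{(n-3)(n-2)}{2}$, and the arm contributions $\sum_{i=1}^{l}2\eta_i=2k$. Your decomposition of the cells with $i,j\ge 2$ into the hooks of $c_{2,2},\dots,c_{l+1,l+1}$ makes explicit the arm-sum step that the paper leaves implicit; one small correction is that the diagonal having exactly $l+1$ cells is part of the construction itself, and the proof of Lemma~\ref{block2symmetry}(\ref{colonnan-2l-2}) relies on it, so it is not a consequence of that lemma.
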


\begin{proof}
The weight of $\lambda$ is equal to the sum of the hook lengths in the first column of $Y_{\eta^*}$. Hence, we have
\[
\begin{aligned}
\sum_{i=1}^{n-2} h_{i,1} 
&= h_{1,1} + \sum_{i=2}^{n-2} h_{i,1}  \\
&= (2n-4) + \sum_{i=2}^{n-2} \bigl( l(c_{i,1}) + 1 + a(c_{i,1}) \bigr) \\
&= (2n-4) + \sum_{i=1}^{n-3} i + \sum_{i=1}^{l}  2\eta_i \\
&= 2n-4 + \frac{(n-3)(n-2)}{2} + 2k\\
&= T_{n,n-2k+1}. 
\end{aligned}
\]
\end{proof}

In order to conclude the proof of the unrefinability of $\lambda$, we proceed by following the same reasoning developed in Sec.~\ref{subsec:unref}, specifically relying on Lemmas~\ref{salphaidiversoj}, \ref{salphai=j}, \ref{lemmaboh}, and \ref{lemmaboh2}. The proofs of Lemma~\ref{lemmaboh} and Lemma~\ref{lemmaboh2} are identical. On the other hand, the proofs of Lemma~\ref{salphaidiversoj} and Lemma~\ref{salphai=j} require only straightforward adaptations to the current setting and are reported here for the sake of completeness.

\begin{proof}[Proof (of Lemma~\ref{salphaidiversoj})]
We proceed as in the proof of Lemma~\ref{salphaidiversoj} of  Sec.~\ref{subsec:unref} and we obtain:
\begin{equation*}%\label{eq:boh}
s_{\alpha} = \eta_{l-(i-2)} + \eta_{l-(j-2)}.
\end{equation*}

Now, let us assume $\alpha+1\neq i,j$. Then, since $\eta\vdash k$, we have
		$
		s_{\alpha}=\eta_{l-(i-2)}+\eta_{l-(j-2)}\leq k < n-2
		$.
		Now, we have
		$
		s_\alpha = n-2-\eta_{l-(\alpha-1)},
		$
		therefore 
		\[n-2=\eta_{l-(i-2)}+\eta_{l-(j-2)}+\eta_{l-(\alpha-1)}\leq k < n-2,
		\]
		a contradiction.\\
		Without loss of generality, let us now assume $\alpha=i-1$. Then we have 
		\begin{equation*}%\label{equazioneidiversoj}
			n-2=\eta_{l-(j-2)}+2\eta_{l-(i-2)}.
		\end{equation*}
		Computing
		\[
		\eta_{l-(j-2)}+\eta_{l-(i-2)}=\eta_{l-(j-2)}+\frac{n-2-\eta_{l-(j-2)}}{2}=\frac{n-2+\eta_{l-(j-2)}}{2},
		\]
		since $\eta\vdash k$, we obtain
		\[
		n-2+\eta_{l-(j-2)}\leq 2k\leq n-2,
		\]
		which is again a contradiction.
\end{proof}

\begin{proof}[Proof (of Lemma~\ref{salphai=j})]
From the hypotheses we have
		\begin{equation*}
			s_{\alpha} = h_{i,i} = 2\eta_{l-(i-2)}.
		\end{equation*}
As in the original proof, we proceed by distinguishinging three possibile cases:
$s_{\alpha} < n-2$,
$s_{\alpha} = n-2$, or
 $s_{\alpha} > n-2$.
 
 In the first case, we proceed in the same way as in the original proof and obtain the exact same conclusion. 
			Assuming  $s_{\alpha} = n-2$, from Eq.~\eqref{equazionei=jsalpha} we have
			\[
			2\eta_{l-(i-2)} = s_{\alpha} = n-2 \geq 2k,
			\]
			which is a contradiction since no part in $\eta$ can be larger than $k$.
			Finally, assume $s_{\alpha} > n-2$, hence $s_{\alpha} \geq n-1> 2k$. 
			Therefore, $\eta_{l-(i-2)} \geq k$, again a contradiction.
\end{proof}

From the previous considerations we obtain the last main contribution of this work.
\begin{theorem}\label{thm:2n4_unrefinable}

Let $\eta \in \mathbb{D}_k$, and let 
$
\lambda = S^c_{\eta^*}
$
be the partition associated with the  Young diagram $Y_{\eta^*}$, where 
$S_{\eta^*}=\KN^{-1}(Y_{\eta^*})$.  
Then $\lambda$ is unrefinable.  
In particular,
\[
\lambda \in 
\begin{cases} 
\,\overline{\mathcal{U}}_{T_{n,n-2k+1}} & \text{if } d=3, \\ 
\,\overline{\mathcal{U}}_{T_{n,n-2k+1}} \setminus \{\zeta_{n,k}\} & \text{if } d > 3. 
\end{cases}
\]

\end{theorem}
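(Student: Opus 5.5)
The plan mirrors the proof of Theorem~\ref{lambdanraffinabile}: assemble the structural lemmas already transferred to this setting, then check the unrefinability criterion of Proposition~\ref{riconoscimentounref} region by region.

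\textbf{Step 1: shape of $Y_{2\eta^*}$.} Fix $\eta\in\mathbb{D}_k$ with $8\le 2k\le n-2$, build $Y_{2\eta^*}$ from the diagonal conditions, and set $\lambda=S^c_{2\eta^*}$. I first invoke Lemma~\ref{rowsColsY}, Lemma~\ref{hookSymmetrySquare}, Corollary~\ref{cor:struct_lambda}, Lemma~\ref{hooks_diagonal_and_sums}, Corollary~\ref{n_minus_2_in_set} and Lemma~\ref{block2symmetry}. Their proofs only use the construction rules and $h_{1,1}=2n-4$, never the value of $k$ relative to $n$, so they carry over unchanged. This gives:
\begin{itemize}
\item[(a)] $|\lambda|=n-2$, with $\lambda_{n-2}=2n-4$ and $n-2$ missing parts;
\item[(b)] $n-2\in S_{2\eta^*}$ and $h_{1,l+2}=n-2$;
\item[(c)] the extra column satisfies $h_{i,l+2}=\eta_{l-(i-2)}$;
\item[(d)] the shifted symmetry $h_{i,j}=h_{j,i+1}$ holds below the square.
\end{itemize}
By the preceding proposition, the weight of $\lambda$ is $T_{n,n-2k+1}$.

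\textbf{Step 2: unrefinability.} Take a hook $s_\alpha$ outside the first column that is not a part of $\lambda$, and split by region as in Sec.~\ref{subsec:unref}.
\begin{itemize}
\item[(i)] Off-diagonal cells of the main square are handled by the adapted Lemma~\ref{salphaidiversoj}. That argument is a contradiction from $n-2\le\sum\eta\le k$ or $n-2+\eta\le 2k$. Here $2k\le n-2$, so the exceptional $(3,k-3)$ case no longer arises.
\item[(ii)] Diagonal cells go through the adapted Lemma~\ref{salphai=j}. Either we land in case (\ref{item-ric3b}) with $h_{\alpha+1,1}=2h_{\alpha+1,\alpha+1}$, or we get a contradiction because $2\eta_{l-(i-2)}\ge n-2\ge 2k$ forces a part $\ge k$.
\item[(iii)] Cells below or to the right of the square reduce, via Lemma~\ref{block2symmetry}(\ref{uguaglianzafq}), to case (i) exactly as in Lemma~\ref{lemmaboh}.
\item[(iv)] The extra column gives $s_\alpha=\eta_{l-(i-2)}<n-2$, hence $n-2=\eta+\eta'\le k$, a contradiction, as in Lemma~\ref{lemmaboh2}.
\end{itemize}
Proposition~\ref{riconoscimentounref} then yields that $\lambda$ is unrefinable. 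Together with (a), which gives maximal part $2n-4$ (the bound in Tab.~\ref{tab:accl-bound} for $n-d$ odd) and $\lfloor\lambda_t/2\rfloor$ missing parts, we get $\lambda\in\overline{\mathcal U}_{T_{n,n-2k+1}}$.

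\textbf{Step 3: the case $d>3$.} It remains to show $\lambda\neq\zeta_{n,k}$. This is the step I expect to be the main obstacle, since it is a bookkeeping point rather than a structural one. I would compute the diagram of $\zeta_{n,k}$ directly. Its missing parts are $n-k-2$, $n-2$ and $n-1,\dots$ apart from $n-2+k$. Hence $s_1=n-k-2$ and $s_2=n-2$, so the square has size $z=2$, with diagonal hooks $2n-4$ and $2(n-2-s_1)=2k$ and extra-column hook $k$.

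So $\zeta_{n,k}$ corresponds to the one-part partition $\eta=(k)$. This should be checked against the convention $t\ge 2$ in $\dbar$. Since the paper's $\mathbb{D}_k$ consists of partitions with at least two parts, $(k)\notin\mathbb{D}_k$, so $\zeta_{n,k}$ is never produced and the image lies in $\overline{\mathcal U}\setminus\{\zeta_{n,k}\}$.

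Injectivity is immediate, because $\eta$ is read back from the extra column. Combined with the forward embedding, this gives the stated count.
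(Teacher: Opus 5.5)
Your proposal is correct and follows essentially the same route as the paper: you transfer the structural lemmas of Sec.~\ref{sec:tri} and verify the criterion of Proposition~\ref{riconoscimentounref} region by region, using the adapted Lemmas~\ref{salphaidiversoj} and~\ref{salphai=j} and the unchanged Lemmas~\ref{lemmaboh} and~\ref{lemmaboh2}, and you dispose of $\zeta_{n,k}$ by identifying it with the excluded one-part partition $(k)$. Your explicit computation for $\zeta_{n,k}$ ($s_1=n-k-2$, $s_2=n-2$, $h_{2,2}=2k$, $h_{2,3}=k$) proves in general what the paper only asserts and illustrates with $\zeta_{19,6}$.
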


The previous results imply that, for $8 \leq 2k \leq n-2$,  the number of maximal unrefinable partitions of $T_{n,n-2k+1}$ coincides with $1 + \# \mathbb{D}_k$. 
As previously detailed, this correspondence is  a bijection once the exceptional cases are handled. In the case $d=3$, the ``$+1$'' term is accounted for by the partition $\widetilde{\sigma}_n$, which is unrefinable but stands outside the main family $\overline{\mathcal{U}}$. Conversely, for $d > 3$, the ``$+1$'' term is represented by the partition $\zeta_{n,k} \in \overline{\mathcal{U}}$, which must be set aside in order to establish the equipotence between the remaining set and $\mathbb{D}_k$. 

There is exactly one obstruction to the above bijection, represented by the partition $\zeta_{n,k}$, whose image under $\KN$ is the one-part partition $(k)$, excluded by definition (as in the requirements established in Sec.~\ref{sec:part}). We conclude the paper by showing  with an example the necessity of this omission when $d > 3$. 

\begin{example}
Let us consider the partition
\[
\zeta_{19,6} = (1,2,\dots,10,12,13,\dots,16,23,34) \in \overline{\mathcal U}_{T_{19,8}}.
\]
The Young diagram shown in Fig.~\ref{fig:young2} highlights, in red, the cell whose hook length corresponds, under the Keith--Nath correspondence, to the partition $(6)$. 
Since $(6)$ is not a partition into distinct parts, the partition $\zeta_{19,6}$ must be excluded from the correspondence described in Theorem~\ref{thm:2n4_unrefinable}.

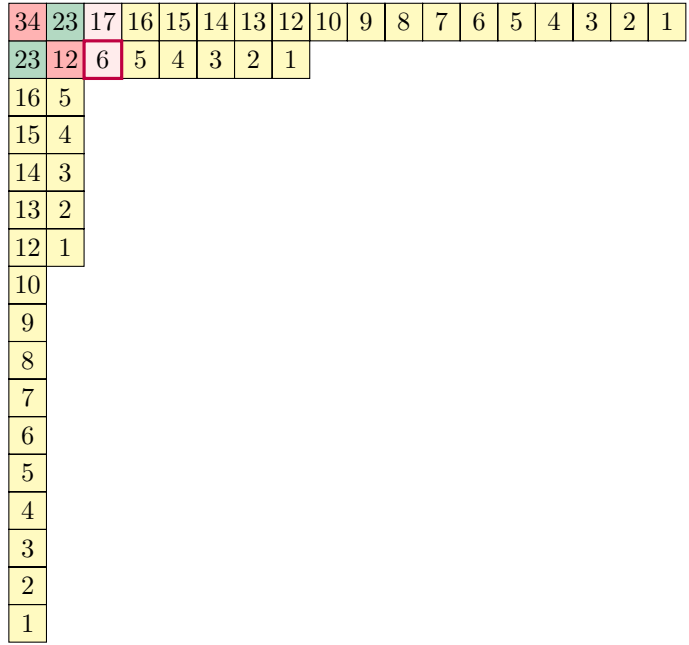
\begin{figure}
\begin{tikzpicture}
	\draw[color=black, fill=yellow!30] (0,-8.5) rectangle (0.5,-8) rectangle (0,-7.5) rectangle (0.5,-7) rectangle (0,-6.5) rectangle (0.5,-6) rectangle (0,-5.5) rectangle (0.5,-5) rectangle (0,-4.5) rectangle (0.5,-4) rectangle (0,-3.5) rectangle (0.5,-3) rectangle (0,-2.5) rectangle (0.5,-2) rectangle (0,-1.5) rectangle   (0.5,-1);
	
	\draw[color=black, fill=yellow!30]  (1.5,0) rectangle (2,-0.5)  rectangle (2.5,0) rectangle (3,-0.5) rectangle (3.5,0) rectangle (4,-0.5)  rectangle (4.5,0) rectangle (5,-0.5)  rectangle (5.5,0) rectangle (6,-0.5) rectangle (6.5,0) rectangle (7,-0.5) rectangle (7.5,0) rectangle (8,-0.5) rectangle (8.5,0) rectangle (9,-0.5);
	
	\draw[color=black, fill=yellow!30] (1,-3.5) rectangle (0.5,-3) rectangle (1,-2.5)rectangle (0.5,-2) rectangle (1,-1.5)rectangle (0.5,-1);
	
	\draw[color=black, fill=yellow!30] (1.5,-1) rectangle (2,-0.5)rectangle (2.5,-1) rectangle (3,-0.5)rectangle (3.5,-1) rectangle (4,-0.5);
	
	\draw[color=black, fill=ForestGreen!30] (0,-1) rectangle (0.5, -0.5) rectangle (1,0);
	
	\draw[color=black, fill=red!30] (0,0) rectangle (0.5,-0.5) rectangle (1,-1);
	
	\draw[color=black, fill=pink!30] (1,0) rectangle (1.5,-0.5);
	
	\draw[color=purple, fill=pink!30, line width=1.2pt] (1.5,-0.5) rectangle (1,-1);

    \node at (0.25,-0.25) {$34$};
    \node at (0.25,-0.75) {$23$};
    \node at (0.25,-1.25) {$16$};
    \node at (0.25,-1.75) {$15$};
    \node at (0.25,-2.25) {$14$};
    \node at (0.25,-2.75) {$13$};
    \node at (0.25,-3.25) {$12$};
    \node at (0.25,-3.75) {$10$};
    \node at (0.25,-4.25) {$9$};
    \node at (0.25,-4.75) {$8$};
    \node at (0.25,-5.25) {$7$};
    \node at (0.25,-5.75) {$6$};
    \node at (0.25,-6.25) {$5$};
    \node at (0.25,-6.75) {$4$};
    \node at (0.25,-7.25) {$3$};
    \node at (0.25,-7.75) {$2$};
    \node at (0.25,-8.25) {$1$};

    \node at (0.75,-0.25) {$23$};
    \node at (1.25,-0.25) {$17$};
    \node at (1.75,-0.25) {$16$};
    \node at (2.25,-0.25) {$15$};
    \node at (2.75,-0.25) {$14$};
    \node at (3.25,-0.25) {$13$};
    \node at (3.75,-0.25) {$12$};
    \node at (4.25,-0.25) {$10$};
    \node at (4.75,-0.25) {$9$};
    \node at (5.25,-0.25) {$8$};
    \node at (5.75,-0.25) {$7$};
    \node at (6.25,-0.25) {$6$};
    \node at (6.75,-0.25) {$5$};
    \node at (7.25,-0.25) {$4$};
    \node at (7.75,-0.25) {$3$};
    \node at (8.25,-0.25) {$2$};
    \node at (8.75,-0.25) {$1$};

    \node at (0.75,-0.75) {$12$};
    \node at (0.75,-1.25) {$5$};
    \node at (0.75,-1.75) {$4$};
    \node at (0.75,-2.25) {$3$};
    \node at (0.75,-2.75) {$2$};
    \node at (0.75,-3.25) {$1$};

    \node at (1.25,-0.75) {$6$};
    \node at (1.75,-0.75) {$5$};
    \node at (2.25,-0.75) {$4$};
    \node at (2.75,-0.75) {$3$};
    \node at (3.25,-0.75) {$2$};
    \node at (3.75,-0.75) {$1$};
    \end{tikzpicture}
    	\caption{Young diagram for the partition $\zeta_{19,6}$ illustrating the nonconforming behavior of $\KN^{-1}$.}
    \label{fig:young2}
\end{figure}
\end{example}

\section*{Conclusions}

In this paper we considered a geometric approach to  study unrefinable partitions,
based on the Keith--Nath correspondence between numerical sets and Young diagrams.
The central results are based on a hook-length criterion that characterizes unrefinability in purely
combinatorial and diagrammatic terms, without appealing to enumerative arguments.
Using this framework, we revisited and unified several known correspondences between
maximal unrefinable partitions and partitions into distinct parts.

Although the triangular and nontriangular cases are treated separately, they follow a
remarkably parallel structure.
For the reader's convenience, we summarize in Tab.~\ref{tab:triangular-vs-nontriangular} the main features of the two settings,
highlighting their close analogy.

The geometric viewpoint introduced here suggests several directions for future work.
Possible extensions include the study of nonmaximal unrefinable partitions, refinability
phenomena under controlled deformations of the diagram, and the interaction between
hook-length symmetries and other classes of constrained partitions.
More generally, the results indicate that the geometry of Young diagrams provides a
natural and flexible language for understanding refinement properties of partitions.

\begin{table}
	\centering
	%\small 
	\begin{tabular}{c|c||c|c}
		\hline
		& \textbf{Triangular case} 
		& \textbf{Nontriangular case} 
		& \textbf{Nontriangular case} \\
		&  & $\l_t = 2n-5$ & $\l_t = 2n-4$ \\
		\hline\hline
		Weight 
		& $T_n$ 
		& $T_{n,n-2k}$ 
		& $T_{n,n-2k+1}$ \\
		\hline
		Upper-left region  
		& square +
		& square 
		& square + \\
		of the diagram
		&  one extra column 
		&  
		&  one extra column \\
		\hline
		Symmetry 
		& quasi-symmetric 
		& self-conjugate 
		& quasi-symmetric  \\
		\hline
		Diagonal encodes 
		& even-distinct parts 
		& odd-distinct parts 
		& even-distinct parts \\
		\hline
		Target partitions 
		& $\mathbb{D}_k = \mathbb{D}^{\,\text{e}}_{2k}$ 
		& $\mathbb{D}^{\,\text{o}}_{2k+2}$ 
		& $\mathbb{D}_k = \mathbb{D}^{\,\text{e}}_{2k}$ \\
		\hline
	\end{tabular}
	
\medskip
\caption{Comparison between the triangular and nontriangular cases.}
\label{tab:triangular-vs-nontriangular}
\end{table}

\section*{Statements and Declarations}
The authors have no relevant financial or nonfinancial interests to disclose. R. Aragona, L. Campioni and R. Civino wrote the main manuscript text. All authors reviewed the manuscript. Data availability statements: no datasets were generated or analyzed during the current study,and therefore no data are available to be shared.

\bibliographystyle{amsalpha}
\bibliography{sym2n_ref}

\end{document}